\definecolor{headerblue}{RGB}{0, 102, 204}
\definecolor{rowgray}{RGB}{240, 240, 240}
\newcommand{\defeq}{\vcentcolon=}
\newcommand{\R}{\mathbb{R}}
\newcommand{\Z}{\mathbb{Z}}
\renewcommand{\P}{\mathbb{P}}
\newcommand{\PP}{\mathcal{P}}
\newcommand{\MM}{\mathcal{M}}
\renewcommand{\SS}{\mathcal{S}}
\newcommand{\N}{\mathbb{N}}
\newcommand{\F}{\mathcal{F}}
\newcommand{\G}{\mathcal{G}}
\newcommand{\A}{\mathcal{A}}
\newcommand{\M}{\mathcal{M}}
\newcommand{\E}{\mathbb{E}}
\newcommand{\eps}{\varepsilon}
\newcommand{\dado}{ \, \left.\right| \, }
\newcommand{\nc}{\newcommand}
\nc{\I}{{\bf 1}}
\nc{\BP}{\mathbb{P}}
\nc{\BE}{\mathbb{E}}
\nc{\BQ}{\mathbb{Q}}
\nc{\BX}{\mathbb{X}}
\numberwithin{equation}{section}
\DeclareMathOperator*{\argmin}{arg\,min}
\DeclareMathOperator*{\argmax}{arg\,max}
\DeclareMathOperator*{\esssup}{ess\,sup}
\DeclareMathOperator*{\essinf}{ess\,inf}
\definecolor{ao(english)}{rgb}{0.0, 0.5, 0.0}
\def\matt#1{\textcolor{blue}{{[\bf Matt:} #1{\bf ]}}}
\def\vitto#1{\textcolor{ao(english)}{{[\bf Vitto:} #1{\bf ]}}}
\def\sf#1{\textcolor{magenta}{{[\bf Sergey:} #1{\bf ]}}}
\newmdtheoremenv[style=myenvs]{theorem}{Theorem}[section]
\newmdtheoremenv[style=myenvs]{thm}{Theorem}[section]
\newmdtheoremenv[style=myenvs]{definition}{Definition}[section]
\newmdtheoremenv[style=myenvs]{defn}{Definition}[section]
\newmdtheoremenv[style=myenvs]{example}{Example}[section]
\newmdtheoremenv[style=myenvs]{exmp}{Example}[section]
\newmdtheoremenv[style=myenvs]{proposition}{Proposition}[section]
\newmdtheoremenv[style=myenvs]{prop}{Proposition}[section]
\newmdtheoremenv[style=myenvs]{assumption}{Assumption}[section]
\newmdtheoremenv[style=myenvs]{ass}{Assumption}[section]
\newmdtheoremenv[style=myenvs]{notation}{Notation}[section]
\newmdtheoremenv[style=myenvs]{nota}{Notation}[section]
\newmdtheoremenv[style=myenvs]{remark}{Remark}[section]
\newmdtheoremenv[style=myenvs]{rmk}{Remark}[section]
\newmdtheoremenv[style=myenvs]{corollary}{Corollary}[section]
\newmdtheoremenv[style=myenvs]{cor}{Corollary}[section]
\newmdtheoremenv[style=myenvs]{lemma}{Lemma}[section]
\newmdtheoremenv[style=myenvs]{lema}{Lemma}[section]
\newmdtheoremenv[style=myenvs]{problem}{Problem}[section]
\newmdtheoremenv[style=myenvs]{hypothesis}{Hypothesis}[section]
\newmdtheoremenv[style=myenvs]{hyp}{Hypothesis}[section]
\newmdtheoremenv[style=myenvs]{conjecture}{Conjecture}[section]
\newmdtheoremenv[style=myenvs]{conj}{Conjecture}[section]
\newmdtheoremenv[style=myenvs]{model}{Model}[]  
\title{On the instability of local learning algorithms:\\
Q-learning can fail in infinite state spaces\footnote{Corresponding author}}
	\author{U. Ayesta$^{a,b}$, S. Foss$^{d,c}$, M. Jonckheere $^{b,a}$ and V. Puricelli $^{a,b,*}$ \vspace{4pt}\\
		$^a$ CNRS, IRIT, 2 rue C. Camichel, 31071 Toulouse, France.\\
		$^b$ CNRS, LAAS, 7 av. du colonel Roche,
31031 Toulouse, France.\\
		$^c$ Maxwell Institute, Heriot-Watt University, EH14 4AS, Edinburgh, United Kingdom.\\
        $^d$ School of MACS, Heriot-Watt University, EH14 4AS, Edinburgh, United Kingdom. 
	}
\begin{document}

\maketitle

\begin{abstract}
  We investigate the challenges of applying model-free reinforcement learning algorithms, like online Q-learning, to infinite state space Markov Decision Processes (MDPs). 
  We first introduce the notion of Local Learning Processes (LLPs), where agents make decisions based solely on local information, and we show that Q-learning can be seen as a specific instance of an LLP. Using renewal techniques, we analyze LLPs and demonstrate their instability under certain drift and initial conditions, revealing fundamental limitations in infinite state spaces. 
  In particular, we show that while asymptotically optimal in finite settings, Q-learning can face instability and strict sub-optimality in infinite spaces.
  Our findings are illustrated through queueing system examples drawn from load balancing and server  allocation. The study underscores the need for new theoretical frameworks and suggests future research into nonlocal Q-learning variants.
\end{abstract}


\section{Introduction}

In the realm of Reinforcement Learning (RL) and Markov Decision Processes (MDPs), simple algorithms such as Q-learning or SARSA stand out as easy-to-implement and successful approaches for solving complex stochastic decision-making problems.
In particular, Q-learning has garnered significant attention as an algorithm with the ability to learn optimal policies without requiring modeling the underlying system dynamics. It is based on bootstrap estimates of the optimal action-value function which balance exploration and exploitation. A critical theoretical foundation for Q-learning is its convergence to the optimal policy—a result rigorously established under the assumption of finiteness of the state space in
\cite{Watkins92} and \cite{Tsitsiklis94}.

In many real-world problems, however, such as resource allocation in communication networks or control of physical systems, natural models have a countable state space. This is the starting point of the present article, where we discuss value-based online learning algorithms on discrete but infinite state spaces.

In the context of Markov Decision Processes (MDPs) (where the transitions dynamics are supposed to be known), several recent studies have explored the challenges associated with infinite state spaces. For instance, \cite{grosof2024} addressed the convergence of the Natural Policy Gradient (NPG) algorithm on some queueing MDPs in infinite state spaces, providing insights into how finite-state approaches can be extended.
 These generalizations typically rely on the structural properties of the state space and on the continuity of the reward and transition functions, which allow for the extension of convergence proofs from finite to infinite settings, both for discounted and undiscounted settings.  Also their method works under the additional assumption on the initial policy being stable.
In the same vein, \cite{Dai-gluzman} considers a queueing system with a parameterized policy and constructs a practical algorithm that, initialized with a stable policy, iteratively improves the average reward.



One can then naturally ask whether similar results hold when dealing with model-free approaches.  These methods, which do not assume knowledge of the underlying MDP dynamics, face significant and different challenges from those arising in MDPs with infinite state spaces. 
The lack of a prior knowledge of state transitions and rewards can lead to instability and to convergence issues that are not present in model-based approaches. This discrepancy highlights the need for new theoretical frameworks and algorithms specifically designed to address infinite state spaces in reinforcement learning.
It is remarkable that the literature is much scarcer in this context. In \cite{shahetal2020}, 
the authors look at the generative case (where one can sample transitions from any pair of state/action). 
They show that an algorithm based on tabular natural policy gradient with softmax parametrization, using the notion of oracle based Monte-Carlo estimates of the Q-function (and restart), is able to find a stable optimal policy if there exists one. 
To the best of our knowledge, there is no results on online RL (i.e., without a generative simulator) in this context.

 Our goal is hence three-fold:
1. Define a class of algorithms, including online Q-learning, having a crucial local property.
2. Show that, under an additional assumption of space invariance, those algorithms can be studied using a framework that extends the classical theory of Markov chains.
3. Demonstrate that, under certain parameter conditions, these algorithms can exhibit instability, implying the inability to learn the optimal policy (for any local algorithm).

To pursue this program, we introduce a key concept of local algorithm coined Local Learning Process (LLP). An LLP is a discrete-time stochastic process used to model the behavior of an agent navigating a (possibly infinite) state space, where the agent sequentially decides which action to take at each state without prior knowledge of the transition probabilities. 
The locality refers to the agent's decision-making and learning being based only on the information available at its current state, this information is updated at each state using solely immediate transition data, without considering more global information involving all possible states of the environment. 


Building on this definition, our main tool for analyzing the stochastic behavior of the state process will be renewal techniques. The primary challenge is that, due to the dependency on learning features, this process itself is not inherently Markovian. However, we are able instead to establish nonstandard regenerative structure where the regenerative cycles may depend on the whole future. Renewal theory is  key to overcome
the difficulty, allowing us to study independent cycles of the process. We consider that one of our key
contributions is demonstrating how renewal techniques may be applied in the context of LLPs.


There is a number of other intricate stochastic processes where similar ideas were used for the construction of independent and identically distributed (i.i.d.) regenerative cycles that can, in general, depend on  the whole future and past.
These processes often involve complex dependencies and require innovative approaches to analyze their behavior over time.
A key area illustrating this challenge is the study of Random Walks in Random Environment (RWRE). In these models, the fundamental parameters, such as transition probabilities, are not fixed but are randomly sampled from a distribution that may itself evolve, leading to rich and intricate behaviors \cite{sznitman-zerner}.
A natural extension of this framework is the Cookie Random Walk, where the environment is locally modified by ``cookies'' placed at each state. These cookies act as a dynamic context, altering transition probabilities as the walker encounters them. This setup provides a versatile model for systems 
 where local conditions or external factors influence the behavior of the system, adding another layer of complexity to the analysis of stochastic processes.

The analytical techniques developed for these walks, for which we refer to \cite{zerner06, ramirez07, menshikov12}, share common ground with other domains. Notably, the concept of regenerative cycles arises in diverse stochastic models such as directed random graphs \cite{foss2003extended, foss2024last}, processes with long memory \cite{comets2002processes}, and interacting particle systems like contact processes \cite{kuczek1989central, mountford2000extension} and infection processes \cite{foss2013stochastic}. Our work builds upon ideas from this broad literature, with particular inspiration drawn from \cite{foss2024last, foss2013stochastic}.

We introduce here a model that captures fundamental resource allocation problems like load balancing and server allocation, which we develop in detail. Many other similar examples could be considered. This model defines a family of MDPs for which we can develop the renewal techniques previously mentioned. These techniques ultimately establish the instability (and hence nonoptimality) of LLPs under certain initial conditions. In this model, states are pairs of nonnegative integers, representing the amounts of jobs in a pair of queues. Two possible actions are available at each state, for example in load balancing, an action specifies which server receives an incoming job, while in server allocation, it determines which server to activate. Each action leads to a different transition dynamics which are assumed to have a homogeneous structure when neither queue is empty ---crucial for renewal techniques to be applied--- with at least one action choice leading to stable behavior. 

The main result of this work, Theorem~\ref{thm:MainResult}, demonstrates that, under certain drift and initial conditions, any LLP algorithm exhibits transient behavior. 
This transience implies that the agent policy will not stabilize, leading to suboptimal performance. This result is significant as it highlights a fundamental limitation of Q-learning and of similar algorithms in infinite state spaces, where traditional convergence guarantees may not hold. 
Interestingly, we present an example illustrating that while both ``red'' and ``green'' policies individually lead to a stable process, an adequate level of exploration results in a convex combination of these policies that ultimately becomes unstable. This phenomenon is connected to the Parrondo's paradox, which states, in particular, that a convex combination of stable Markov kernels can yield an unstable process, see e.g. \cite{HA1999} or \cite{KKA2006}. 

Though we focused on negative results here, our techniques could also lead to identifying sufficient conditions for LLPs to be stable. However, in a model free context, these conditions, depending in a intricate way on the underlying dynamics and exploration levels are not exploitable and we leave for future work positive results concerning nonlocal versions of Q-learning.

The paper is structured as follows: Section~\ref{sec:mainsetting} introduces the formal definition of LLPs and describes Model \ref{Model} in detail. Section 3 is devoted to present a relaxation of Model~\ref{Model} and establishes a corresponding limit theorem for LLPs within this new framework. Section~\ref{sec:mainresult} presents the main theoretical result Theorem~\ref{thm:MainResult} and a  sketch of its proof. Section~\ref{sec:numerics} illustrates numerically our findings and explores some generalizations. Section~\ref{sec:conclusion} discusses the implications of these findings and potential directions for future research. 
Finally, the Appendix provides the detailed mathematical constructions for the instability analysis and contains the proofs of all results presented in this work. 
It is worth noting that although the jumps are bounded in our Models 1 and 2, the techniques developed in the Appendix can be applied, with minor modifications, to models where the jump distributions have unbounded support but finite exponential moments.

An extended abstract of this article appeared in \cite{ifip-short-anon}.

\section{Main setting}\label{sec:mainsetting}

\subsection{Local Learning Processes} Consider an infinite state space $\SS   \subset \Z^d$, an action space $\A = \{r,g\}$ and transition probability function $p: \SS \times \SS \times \A \to [0,1]$, where $p( y \ | \ x , a )$ stands for the probability of going from $x $ to $y$ by applying action $a$. The main notation used in the paper work are summarized in Table~\ref{glossary}.


Our objective is to model an agent that has no knowledge of $p( y \dado x , a)$ and must decide sequentially which action to take at each state. 
We then investigate whether the system remains stable according to Definition~\ref{defn:unstable} below.

We allow the agent to ``store'' information at each state, which will be used to decide which action to take upon subsequent visits to that state. After each visit, the agent has the ability to update the stored information. The information that the agent stores at each state is what we refer to as the ``environment'' and it is formally defined as a function $\beta: \SS \times \A \to \R$.

We now introduce a key concept in our work, the notion of Local Learning Process (LLP). We first describe it as follows. Starting from $X_0 \in \SS$ and initial \emph{environment} $\beta_0: \SS \times \A \to \R$, a \emph{decision rule} $\varphi$ determines the law of the action to be taken as a function of the environment at the current state: $A_0 \sim \varphi (\ \cdot \dado \beta_0(X_0, \cdot))$. We assume that $\varphi (\ a \dado \beta_0( x, \cdot)) > 0$ for any action $a$ and state $x$, meaning that there is some positive probability of applying any action at the first visit to any state. Then action $A_0$ is applied at state $X_0$ and the process moves accordingly to $X_1 \sim p(\ \cdot \dado X_0, A_0)$. Finally, an \emph{update rule} changes the value of the environment at $(X_0,A_0)$, defining a new environment $\beta_1$ and the process goes on in the same fashion for all times $n \geq 1$. 

We now provide a formal definition of LLP that captures processes arising from popular learning algorithms, such as Q-learning as we will see later in Definition \ref{defn:Qlearningpolicy}.

\begin{defn}[Local Learning Process] \label{defn:learningprocess} ~\\
    A Local Learning Process (LLP) is a discrete time stochastic process $(X_n,A_n,\beta_n)_{n\geq 0}$ defined by an initial state $X_0 \in \SS$, an initial environment $\beta_0: \SS \times \A \to \R$, an update rule $\psi$ and a decision rule $\varphi$ that are intertwined by the following equations
\begin{align*}
    &1. \ \  A_n \sim \varphi(\ \cdot \dado \beta_n(X_n, \cdot) ) \\
    &2. \ \ X_{n+1} \sim p(\  \cdot \dado X_n , A_n ), \\
    &3. \ \ \beta_{n+1}(X_n,A_n) = \psi (\beta_n(X_{n},\cdot),\beta_n(X_{n+1}, \cdot),X_{n+1} - X_n , A_n),
\end{align*}
where we let $\beta_{n+1}(x,a) =
        \beta_n (x,a) \text{ if } (x,a) \neq (X_n,A_n).$
We assume further that an LLP applies with a positive probability any action $a$ at the first visit to any state:
\begin{align}
    \varphi ( \ a \dado \beta_0 ( x , \cdot ) ) > 0 \text{ for all } x \in \SS, \text{ and } a\in\A.  \label{eq:LLP red}
\end{align}
We finally assume that the initial environment is state invariant:
\begin{align}\label{homogeneous environment}
    \beta_0 (x, \cdot ) = \beta_0 (y, \cdot ) \ \text{ for all } x,y \in \SS.
\end{align}
\end{defn}

    Any LLP defines a probability distribution $\P$ on state-action paths such that for all $x_0,x_1,\dots,x_{N+1} \in \SS$ and $a_0,\dots,a_{N} \in \A$ 
    \begin{align}
        & \P(A_0 = a_0, X_1 = x_1, \dots, A_{N} = a_{N}, X_{N+1} = x_{N+1} \ | \ X_0 = x_0 ) \notag \\
        & \quad=   \varphi (\ a_0 \dado \beta_0 (x_0, \cdot ) ) p (x_1 \dado x_0, a_0 )   \dots  \varphi (\ a_{N} \dado \beta_{N} (x_{N}, \cdot ) ) p (x_{N+1} \dado  x_{N}, a_{N} )  \label{eq:law at time n} 
        \end{align}
     This is justified because $\beta_i$, the environment at time $i \geq 1$, can be computed recursively from the initial environment $\beta_0$, and the state-action trajectory $x_0,a_0,x_1,a_1,\dots,x_i,a_i,x_{i+1}$.

   Let us move now to introducing the concept of \emph{stability} which is a central property for learning tasks on infinite state spaces. The main result of this work Theorem \ref{thm:MainResult} establishes the \emph{transience} of LLPs in a concrete family of examples.

 \begin{defn}\label{defn:unstable}~\\
    \begin{itemize}
        \item We say that $(X_n)_{n \geq 0}$ is irreducible if, for any bounded, nonempty set $U\subset \SS$ and any $x_0 \notin U$, 
        \begin{equation}
    \P( \inf \{ n\geq 1: X_n \in U \} < + \infty \ | \ X_0 = x_0) > 0. 
\end{equation}
        \item We say that $(X_n)_{n \geq 0}$ is \emph{transient} or \emph{unstable} if it is irreducible and, for any bounded set $U \subset \SS$, there is $x_0 \notin U$  such that 
    \begin{equation}\label{eq:transience}
        \P ( \  \inf \{ n \geq 1 : X_n \in U \} = +\infty \ \dado X_0 = x_0 ) > 0 .
    \end{equation}
    \item We say that $(X_n)_{n\geq 0}$ is \emph{positive recurrent} or \emph{stable} if it is irreducible and, for any bounded set $U \subset \SS$ and any $x_0 \notin U$,
    \begin{equation}
        \E ( \  \inf \{ n \geq 1 : X_n \in U \} ) \ \dado X_0 = x_0 ) < + \infty .
    \end{equation}
    \end{itemize}
\end{defn} 


Observe that the Definition \ref{defn:unstable} coincides with the standard definitions of irreducibility, transience and positive recurrence for Markov chains. Note that for irreducible countable aperiodic Markov chains {\it stability} usually means {\it positive recurrence}, so transience is a strong form of instability.
In the sequel, all the MDPs we shall consider (see Model \ref{Model} below) have (sufficiently) irreducible dynamics so we do not elaborate further on unichain conditions here and the previous definition will be sufficient for our needs.


\subsection{Model \ref{Model}: queueing networks}\label{sec:loadbalancing}

We now provide a detailed description of  the model of $(\SS,\A,p)$, that is required for the formulation of  Theorem \ref{thm:MainResult}. 

We need a number of further notation for the precise description of Model \ref{Model}. Let $\PP$ be the family of probability measures on the 4-states space $\{e_1,-e_1,e_2,-e_2 \}$, where $e_1=(1,0)$ and $e_2=(0,1)$ are the canonical vectors in ${\mathbb R}^2$. For any $\mu \in \PP$, denote its {\it drift} as $
    d(\mu) = ( \mu(e_1) - \mu(-e_1) , \mu(e_2) - \mu(-e_2) ) \in \R^2$. The positive orthant is $\R^2_+ = \{ (x_1,x_2) \in \R^2 : x_1,x_2 \geq 0 \}$ and its restriction to the integer grid ${\mathbb Z}_{+}^2 = \{(x_1,x_2) \in \Z^2 : x_1,x_2\geq 0 \}$.

\begin{model}\label{Model}
     Assume that, in the triple $(\SS,\A,p)$, we let  $\SS = {\mathbb Z}_{+}^2$, $\A = \{r,g\}$ and the transitions probabilities $p( \ y \dado x , a )$ defined as follows: for  $x = (x_1,x_2), a = r,g$ there are given   $\mu_a,\mu_a ', \mu_a '' \in \PP$ such that 
     \begin{align} 
         &p(x \pm e_i \dado x, a) = \begin{cases}
             \mu_a (\pm e_i) & \text{ if } x_1 > 0 \text{ and } x_2 > 0, \\
             \mu ' _a (\pm e_i) & \text{ if } x_1 > 0 \text{ and } x_2 = 0, \\
             \mu '' _a (\pm e_i) & \text{ if } x_1 = 0 \text{ and } x_2 > 0. \label{eq: transitionsmodel1}
         \end{cases} 
     \end{align}
    For the state $x = (0,0)$ we simply assume that $p( e_1 \dado (0,0) ,a ) + p( e_2 \dado (0,0) ,a ) = 1$ for $a=r,g$.
     We assume the following conditions upon the drifts. 
     
     There is $v \in \R^2_+$ such that     \begin{equation}\label{parameter2}
         \langle w , v \rangle < 0 \text{ for } w = d_g,d_g ', d_g '',
     \end{equation}
     There is a unit vector $l  \in \R^2$ such that 
\begin{equation}
\varrho \defeq \min \{ \langle d_r , l \rangle, \langle d_g , l \rangle \} > 0 \label{parameter4}
\end{equation}
Finally, assume that there exists $0< \alpha_0 < \alpha_1 \leq 1$ such that $\delta d_r + (1-\delta) d_g \in \R^2_+$ for all $\delta \in  (\alpha_0,\alpha_1)$, and satisfying
\begin{equation}\label{parameter5}
    1-\alpha_1 < \varrho - \alpha_0.
\end{equation}
\end{model}

As mentioned in the Introduction, Model \ref{Model} captures common features of (simple) queueing problems (see Examples 1 and 2 below). 
Particularly \eqref{parameter2} implies the following stability result for an agent always applying ``green'' action. We leave the proof for Appendix \ref{appendix:loadbalancing}.
\begin{lema}\label{lemma:lyapunovgreen}
    Let $(\SS,\A,p)$ be as in Model \ref{Model} and $(X_n)_{n\geq 0}$ defined by $X_{n+1} \sim p( \cdot \ | \ X_n , g )$ for all $n \geq 0$. Then $(X_n)_{n\geq 0}$ is positive recurrent, and if $v \in \R^2_+$ satisfies \eqref{parameter2}, then  
    \begin{equation}
        \E( \langle X_\infty , v \rangle ) < + \infty.
    \end{equation}
\end{lema}

 Next, condition \eqref{parameter4} states that there is a common direction for both drifts $d(\mu_r)$ and $d(\mu_g)$; in other words these drifts are not in opposite directions. This is crucial for the arguments we develop for our main result Theorem \ref{thm:MainResult}, particularly in Appendix \ref{appendix:mathematicalframework}. Finally the existence of the quantities $\alpha_0$ and $\alpha_1$ imply that there are common directions in the positive orthant $\R^2_+$, and \eqref{parameter5} appears as a technical condition for the proof of Theorem \ref{thm:MainResult}. 
 Figure~\ref{fig:cond model 1} illustrates Conditions \eqref{parameter4} and \eqref{parameter5} for the models considered in the Examples 1 and 2 described below.

\begin{figure}[ht]
\centering
\resizebox{0.7\textwidth}{!}{
\begin{tikzpicture}
    \coordinate (dr_end) at (1.7,1.5);
    \coordinate (dg_end) at (2,-0.8);
    
    \clip (-1.1,-1.5) rectangle (3.1,3.1);
    
    \coordinate (dr_perp) at (-1.5,1.7);
    
    \coordinate (dg_perp) at (0.8,2);
    
    \begin{scope}
        \path[clip] ($-5*(dr_perp)$) -- ($5*(dr_perp)$) -- ($5*(dr_perp) + 8*(dr_end)$) -- ($-5*(dr_perp) + 8*(dr_end)$) -- cycle;
        \fill[gray!20, opacity=0.3] (-8,-8) rectangle (8,8);
    \end{scope}
    
    \begin{scope}
        \path[clip] ($-5*(dg_perp)$) -- ($5*(dg_perp)$) -- ($5*(dg_perp) + 8*(dg_end)$) -- ($-5*(dg_perp) + 8*(dg_end)$) -- cycle;
        \fill[gray!20, opacity=0.4] (-8,-8) rectangle (8,8);
    \end{scope}
    
    \begin{scope}
        \clip ($-5*(dr_perp)$) -- ($5*(dr_perp)$) -- ($5*(dr_perp) + 8*(dr_end)$) -- ($-5*(dr_perp) + 8*(dr_end)$) -- cycle;
        \path[clip] ($-5*(dg_perp)$) -- ($5*(dg_perp)$) -- ($5*(dg_perp) + 8*(dg_end)$) -- ($-5*(dg_perp) + 8*(dg_end)$) -- cycle;
        \fill[gray!60, opacity=0.8] (-8,-8) rectangle (8,8);
    \end{scope}
    
    \draw[dotted, thick, gray] ($-4*(dr_perp)$) -- ($4*(dr_perp)$);
    \draw[dotted, thick, gray] ($-4*(dg_perp)$) -- ($4*(dg_perp)$);
    
    \draw[dotted, thick, black] (dr_end) -- (dg_end);
    
    \draw[black,  thick, -{Latex[length=5pt]}] (0,0) -- (dr_end) node[above left, font=\scriptsize] {$d(\mu_r)$};
    
    \draw[black, thick, -{Latex[length=5pt]}] (0,0) -- (dg_end) node[below , font=\scriptsize] {$d(\mu_g)$};
    
    \draw[black, thin, -] (-1,0) -- (4,0) node[below, font=\scriptsize] {};
    \draw[black, thin, -] (0,-2) -- (0,4) node[right, font=\scriptsize] {};
    
\end{tikzpicture} \hspace{2cm} \begin{tikzpicture}
    \coordinate (dr_end) at (-0.7,1.5);
    \coordinate (dg_end) at (2,-0.8);
    
    \clip (-1.1,-1.5) rectangle (3.1,3.1);
    
    \coordinate (dr_perp) at (-1.5,-0.7);
    
    \coordinate (dg_perp) at (0.8,2);
    
    \begin{scope}
        \path[clip] ($-5*(dr_perp)$) -- ($5*(dr_perp)$) -- ($5*(dr_perp) + 8*(dr_end)$) -- ($-5*(dr_perp) + 8*(dr_end)$) -- cycle;
        \fill[gray!20, opacity=0.3] (-8,-8) rectangle (8,8);
    \end{scope}
    
    \begin{scope}
        \path[clip] ($-5*(dg_perp)$) -- ($5*(dg_perp)$) -- ($5*(dg_perp) + 8*(dg_end)$) -- ($-5*(dg_perp) + 8*(dg_end)$) -- cycle;
        \fill[gray!20, opacity=0.4] (-8,-8) rectangle (8,8);
    \end{scope}
    
    \begin{scope}
        \clip ($-5*(dr_perp)$) -- ($5*(dr_perp)$) -- ($5*(dr_perp) + 8*(dr_end)$) -- ($-5*(dr_perp) + 8*(dr_end)$) -- cycle;
        \path[clip] ($-5*(dg_perp)$) -- ($5*(dg_perp)$) -- ($5*(dg_perp) + 8*(dg_end)$) -- ($-5*(dg_perp) + 8*(dg_end)$) -- cycle;
        \fill[gray!60, opacity=0.8] (-8,-8) rectangle (8,8);
    \end{scope}
    
    \draw[dotted, thick, gray] ($-4*(dr_perp)$) -- ($4*(dr_perp)$);
    \draw[dotted, thick, gray] ($-4*(dg_perp)$) -- ($4*(dg_perp)$);
    
    \draw[dotted, thick, black] (dr_end) -- (dg_end);
    
    \draw[black,  thick, -{Latex[length=6pt]}] (0,0) -- (dr_end) node[above , font=\scriptsize] {$d(\mu_r)$};
    
    \draw[black, thick, -{Latex[length=6pt]}] (0,0) -- (dg_end) node[below , font=\scriptsize] {$d(\mu_g)$};
    
    \draw[black, thin, -] (-1,0) -- (4,0) node[below, font=\scriptsize] {};
    \draw[black, thin, -] (0,-2) -- (0,4) node[right, font=\scriptsize] {};
    
\end{tikzpicture}
}
    \caption{Representation of Conditions \eqref{parameter4} and \eqref{parameter5} of Model \ref{Model} in two examples described below: Example 1 (left) and Example 2 (right). The darker area represents the set of vectors that have a positive component with both drifts $d(\mu_r)$ and $d(\mu_g)$.}
    \label{fig:cond model 1} 
    
\end{figure}
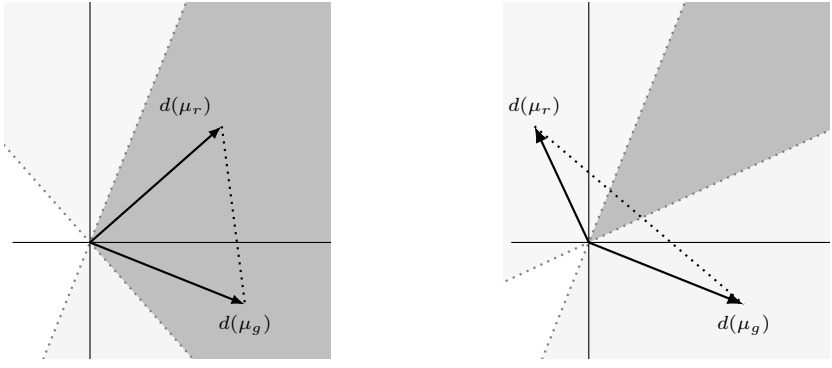

We now give two practical examples of Model 1 representing two types of queuing networks. In these examples, each state corresponds to a point $x = ( x_1, x_2) \in \Z^2_+$ indicating the lengths of both queues (how many jobs are in each queue). Formally the rates and parameters considered in the two examples below define continuous-time Markov chains on $\Z^2_{+}$. However, it is well-known that given a continuous-time Markov chain on a discrete state space, one can construct a corresponding discrete-time Markov chain by observing the system at the random times when state transitions occur. This yields what is known as the corresponding embedded Markov chain. The transition probabilities of the embedded Markov chain capture the probability of jumping from one state to another, conditioned on the fact that a transition occurs. This construction defines the transition probabilities $p( y \ | \ x, a)$ considered herein. Explicit formulas are derived for Examples 1 and 2 (given below) in Appendix \ref{appendix:loadbalancing}.

\textbf{Example 1: load balancing.} Consider jobs arriving to a two-queue system with rate $\lambda$. Upon arrival, a job is sent by the dispatcher to Queue 1 with probability $p\in [0,1]$ and to Queue 2 with probability $1-p$. When both queues are busy (nonempty), jobs leave Queue $i$ with rate $\mu_i$ for $i = 1,2$. When one (no matter which one) queue is empty, jobs leave the busy queue with rate $\widetilde{\mu}$. Further, the dispatcher allows only two possibilities for the parameter $p$, namely $p_r,p_g \in [0,1]$, for assigning the job to Queue 1. So the agent has only two available actions at each state, namely action ``red'' and action ``green''. A proof of the following Lemma can be found in Appendix \ref{appendix:loadbalancing}

\begin{lema}\label{lemma:loadbalancing.}
Example 1 satisfies the conditions of Model \ref{Model} if:
\begin{align}
    &\begin{cases}  
        \mu_1 < \lambda p_r \\     
        \mu_2 < \lambda (1-p_r) .  
    \end{cases} 
    \label{eq:loadbalancing1}\\
   & \begin{cases}   
        \mu_1 < \lambda p_g \\  
        \mu_2 > \lambda (1-p_g).
    \end{cases}
    \label{eq:loadbalancing2}\\
    & \widetilde{\mu} > \lambda \left(\frac{(\mu_1+\mu_2)p_g-\mu_1}{\mu_2 - \lambda (1-p_g)} \right).
    \label{eq:loadbalancing4}\\
    &\mu_2 < \frac{(\lambda p_r - \mu_1)(p_g-p_r)}{\lambda+\mu_1+\mu_2} + \lambda (1-p_g). 
    \label{eq:loadbalancing3}
\end{align}
    
Also the Markov chain $(X_n ^a)_{n\geq 0}$ defined by an agent always applying action $a$ is stable when $a = g$ and transient when $a = r$.
 
\end{lema}

 Let us mention that \eqref{eq:loadbalancing1} is linked to the instability of the ``red'' action, also \eqref{eq:loadbalancing1} in combination with \eqref{eq:loadbalancing2} gives \eqref{parameter4}. Then \eqref{eq:loadbalancing2} and \eqref{eq:loadbalancing4} are related to \eqref{parameter2} while \eqref{eq:loadbalancing3} corresponds to \eqref{parameter5}.

 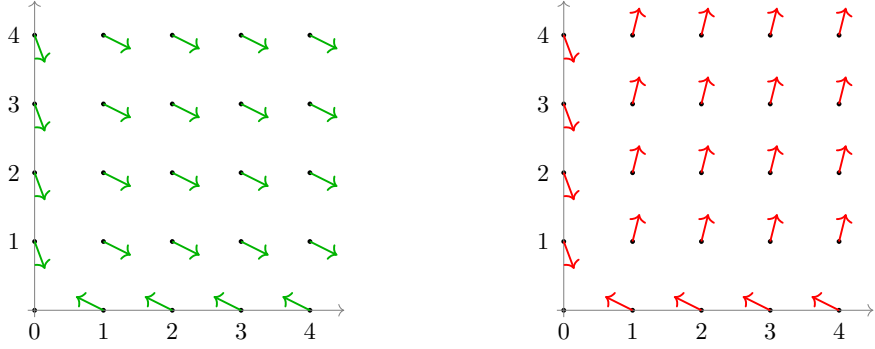
\begin{figure}[ht]
 \centering
        \resizebox{0.3\textwidth}{!}{ 
            \begin{tikzpicture}
                \foreach \x in {0,1,2,3,4} {
                    \foreach \y in {0,1,2,3,4} {
                        \fill (\x, \y) circle (1pt); 
                    }
                }

                \foreach \x in {0,1,2,3,4} {
                    \node at (\x, -0.3) {\x}; 
                }
                \foreach \y in {1,2,3,4} {
                    \node at (-0.3, \y) {\y}; 
                }

                \draw[->,thin, gray] (-0.1, 0) -- (4.5, 0) ; 
                \draw[->,thin, gray] (0, -0.1) -- (0, 4.5) ; 

                \foreach \x in {1,2,3,4} {
                    \foreach \y in {1,2,3,4} {
                        \draw[->, thick, green!70!black!100] (\x,\y) -- (\x+0.4,\y-0.2); 
                    }
                }

                \foreach \x in {1,2,3,4} {
                    \draw[->, thick, green!70!black!100] (\x,0) -- (\x-0.4,0.2); 
                }
                
                \foreach \y in {1,2,3,4} {
                    \draw[->, thick, green!70!black!100] (0,\y) -- (0.15,\y-0.4); 
                }
            \end{tikzpicture} }  \hspace{2cm}     \resizebox{0.3\textwidth}{!}{ 
            \begin{tikzpicture}
                \foreach \x in {0,1,2,3,4} {
                    \foreach \y in {0,1,2,3,4} {
                        \fill (\x, \y) circle (1pt); 
                    }
                }

                \foreach \x in {0,1,2,3,4} {
                    \node at (\x, -0.3) {\x}; 
                }
                \foreach \y in {1,2,3,4} {
                    \node at (-0.3, \y) {\y}; 
                }

                \draw[->,thin, gray] (-0.1, 0) -- (4.5, 0) ; 
                \draw[->,thin, gray] (0, -0.1) -- (0, 4.5) ; 

                \foreach \x in {1,2,3,4} {
                    \foreach \y in {1,2,3,4} {
                        \draw[->, thick, red] (\x,\y) -- (\x+0.1,\y+0.4); 
                    }
                }

                \foreach \x in {1,2,3,4} {
                    \draw[->, thick, red] (\x,0) -- (\x-0.4,0.2); 
                }
                
                \foreach \y in {1,2,3,4} {
                    \draw[->, thick, red] (0,\y) -- (0.15,\y-0.4); 
                }
            \end{tikzpicture}
        }
        \caption{Map of the drifts for the ``green'' action \emph{(left)} and ``red'' action \emph{(right)} for Example 1 (load balancing).}
     \label{fig:red-greensystem loadbalancing}
\end{figure}

\

\textbf{Example 2: server allocation.} Consider two types of jobs arriving at a single server with the same rate $\lambda$. Jobs of type $i$ arrive at Queue $i$ for $i = 1, 2$. The server must select which queue to serve. When both queues are busy, the server chooses to serve a job from Queue 1 (Queue 2) by applying action $r$ (action $g$). The selected job then leaves the system with rate $\mu$. When one of the queues is empty, the server must serve the nonempty queue for both actions, and in this case, the served job departs the system with rate $\widetilde{\mu}$.

As in the load balancing example each state corresponds to a state $x = ( x_1, x_2) \in \Z^2_{+}$ indicating the length of both queues (how many jobs are in each queue). Also we work with the transitions $p ( \ y \ | \ x, a)$ of the associated Embedded Chain as described previously. A proof of the following Lemma can be found in Appendix \ref{appendix:loadbalancing}.
\begin{lema}\label{lemma:serverallocation.}
Example 2 satisfies the drift conditions of Model \ref{Model} if: \begin{align}
& \lambda < \mu < 2\lambda < \widetilde{\mu}
\label{eq:singleserver1b}
\\
& \widetilde{\mu} > \frac{\mu \lambda}{\mu - \lambda}
\label{eq:singleserver2b}
\\ & \frac{\mu - \lambda}{\mu} < \frac{1}{\sqrt{2}} \frac{2\lambda - \mu}{2\lambda + \mu}
\label{eq:singleserver3b}
\end{align}

Also the Markov chain $(X_n ^a)_{n\geq 0}$ defined by an agent consistently applying either action $r$ or $g$ is stable in both cases.
\end{lema}

We mention that \eqref{eq:singleserver1b} and \eqref{eq:singleserver2b} imply both \eqref{parameter2} and \eqref{parameter4}. They also imply the stability of both actions, ``red'' and ``green''. Finally \eqref{eq:singleserver3b} is related to \eqref{parameter5}.

 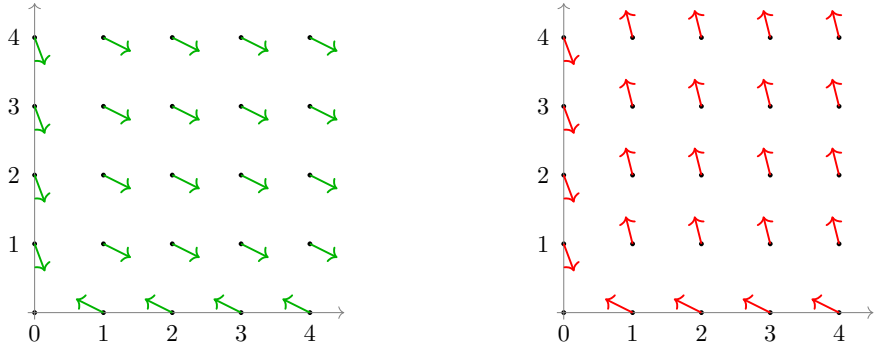
\begin{figure}[ht]
 \centering
        \resizebox{0.3\textwidth}{!}{ 
            \begin{tikzpicture}
                \foreach \x in {0,1,2,3,4} {
                    \foreach \y in {0,1,2,3,4} {
                        \fill (\x, \y) circle (1pt); 
                    }
                }

                \foreach \x in {0,1,2,3,4} {
                    \node at (\x, -0.3) {\x}; 
                }
                \foreach \y in {1,2,3,4} {
                    \node at (-0.3, \y) {\y}; 
                }

                \draw[->,thin, gray] (-0.1, 0) -- (4.5, 0) ; 
                \draw[->,thin, gray] (0, -0.1) -- (0, 4.5) ; 

                \foreach \x in {1,2,3,4} {
                    \foreach \y in {1,2,3,4} {
                        \draw[->, thick, green!70!black!100] (\x,\y) -- (\x+0.4,\y-0.2); 
                    }
                }

                \foreach \x in {1,2,3,4} {
                    \draw[->, thick, green!70!black!100] (\x,0) -- (\x-0.4,0.2); 
                }
                
                \foreach \y in {1,2,3,4} {
                    \draw[->, thick, green!70!black!100] (0,\y) -- (0.15,\y-0.4); 
                }
            \end{tikzpicture} }  \hspace{2cm}     \resizebox{0.3\textwidth}{!}{ 
            \begin{tikzpicture}
                \foreach \x in {0,1,2,3,4} {
                    \foreach \y in {0,1,2,3,4} {
                        \fill (\x, \y) circle (1pt); 
                    }
                }

                \foreach \x in {0,1,2,3,4} {
                    \node at (\x, -0.3) {\x}; 
                }
                \foreach \y in {1,2,3,4} {
                    \node at (-0.3, \y) {\y}; 
                }

                \draw[->,thin, gray] (-0.1, 0) -- (4.5, 0) ; 
                \draw[->,thin, gray] (0, -0.1) -- (0, 4.5) ; 

                \foreach \x in {1,2,3,4} {
                    \foreach \y in {1,2,3,4} {
                        \draw[->, thick, red] (\x,\y) -- (\x-0.1,\y+0.4); 
                    }
                }

                \foreach \x in {1,2,3,4} {
                    \draw[->, thick, red] (\x,0) -- (\x-0.4,0.2); 
                }
                
                \foreach \y in {1,2,3,4} {
                    \draw[->, thick, red] (0,\y) -- (0.15,\y-0.4); 
                }
            \end{tikzpicture}
        }
        \caption{Map of the drifts for the ``green'' action \emph{(left)} and ``red'' action \emph{(right)} for Example 2 (server allocation).}
     \label{fig:red-greensystem serverallocation}
\end{figure}

\subsection{Online Q-learning}

We now illustrate the LLP framework with a concrete example: an online version of the Q-learning algorithm, one of the most popular algorithms of the Reinforcement Learning literature. For this we introduce the following necessary preliminaries.

In the discounted cost problem the goal for an agent is to minimize for some fixed $\gamma \in (0,1]$ the discounted cost $J_\gamma$ or the total cost $J = J_1$:
\begin{equation}\label{eq:discountedcost}
   J_\gamma (x) \defeq \limsup_n \E \left(  \sum_{i=0} ^ {n-1} \gamma^i c(X_i,A_i,X_{i+1}) \dado X_0 = x \right).
\end{equation} where $c(x,a,y)$ is the cost of moving to state $y$ by applying action $a$ at state $x$.

A policy is $\pi:\SS \to \PP(\A)$, where $\pi( a \ | \ x )$ is the probability of making action $a$ at state $x$ under policy $\pi$. The objective is to find an optimal policy $\pi^*$ such that
\begin{equation*}
    J_\gamma ^{\pi^*} ( x ) = J^*_\gamma (x) := \min_{\pi} J_\gamma ^{\pi} (x) 
\end{equation*}

It is known that there exists an optimal state-value function $Q^* : \SS \times \A \to \R$, which is known to be the unique solution of the so-called Bellman equation (see \cite{puterman}), that for $\gamma <1$ takes the form:
\begin{align*}
    &Q^* (s,a) = \min_{a' \in \A} \left\{  \sum_{y \in \SS } p(y \ | \ x , a ) \left( c(x,a,y)+ \gamma Q^* (y,a') \right)\right\}.
\end{align*}

The knowledge of $Q^*$ allows to define an optimal policy $\pi^*$  by considering $\A^*(x) = \argmin_{a\in \A} Q^* (x,a)$ and letting
\begin{equation*}
    \pi^* ( a \ | \ x ) = \begin{cases}
        \dfrac{1}{| \A^*(x)|} & \text{ if } a \in \A^*(x) \\
        \ \ 0 & \text{ if not.}
    \end{cases}
\end{equation*}

The Q-learning algorithm is devised to approximate this optimal action-value function $Q^*$. 

\begin{defn}\label{defn:Qlearningpolicy}
    Fix $\varepsilon \in [0,1]$ and $\delta > 0$ the exploration parameter and the step-size respectively. Consider also the initial conditions $x_0 \in \SS$ and $\widehat{Q}_0:\SS \times \A \to \R$. The asynchronous Q-learning with constant step size $\delta$ is the sequence $(X_n,A_n,\widehat{Q}_n)_{n\geq 0}$ where $X_0 = x_0$ and for each $n \geq 0$: 
    \begin{align*}
        &\varphi ( \ a \dado \widehat{Q}_n (X_n, \cdot ) )=  \begin{cases}
        \dfrac{1-\eps}{| \widehat{\A}_n |} + \eps /2  & \text{ if } a \in \widehat{\A}_n. \\
        \ \ \eps/2 & \text{ if not}
    \end{cases}
    \end{align*}
    where $\widehat{\A}_n \defeq \argmax_{a' \in A} \widehat{Q}_n(X_n,a')$. Finally the update rule is given by
    \begin{align*}
    &\widehat{Q}_{n+1} (X_n, A_n) =  (1-\delta) \widehat{Q}_n( X_n, A_n) +  \delta \left( c(X_n,A_n,X_{n+1}) + \gamma \min_{a' \in A} \widehat{Q}_n(X_{n+1},a') \right)  
    \end{align*}
\end{defn}

This procedure can be called $\varepsilon$-greedy asynchronous Q-learning with constant step-size $\delta$. In \cite{Beck2012} (see Theorem 3.4),  the authors derive an upper bound (depending on $\delta$) for the expected error of the approximation $\E( || \widehat{Q}_k - Q^* || _ \infty )$ in the case of finite state spaces.

In the context of Model \ref{Model} we are going to assume that the cost takes the form : 
\begin{equation}\label{eq:cost}
    c( x , a , y) = \left( y_1-x_1 \right) + \left( y_2 - x_2 \right).
\end{equation}
We refer this cost as being ``local'', in the sense that it only depends on the transition $y-x$, and not on the actual state $x$. 
\begin{rmk}\label{rmk:Qlearning-LLP}
    For Q-learning (Definition~\ref{defn:Qlearningpolicy}) to be an LLP, it suffices to use the local cost function of \eqref{eq:cost} and a state-invariant initial condition $\widehat{Q}_0$. 
\end{rmk} 
    
    Finally we see that for the cost function given by \eqref{eq:cost} the total cost $J = J_1$ defined in \eqref{eq:discountedcost} takes the form
    \begin{align*}
        J (x) &= \limsup_n \E \left( \sum_{i=0}^{n-1} c(X_{n+1},A_n,X_n) \ | \ X_0 =x  \right) \\ &= \limsup_n \E \left( |X_{n}| -|X_{0}| \ | \ X_0 = x\right)
    \end{align*}
    where $|x| = x_1+x_2$. So that for this cost, in addition of penalizing the growth of the queues, having finite total cost, i.e. $J < + \infty$, prevents $(X_n)_{n\geq 0}$ of being transient (Definition \ref{defn:unstable}). For an agent always applying ``green'' action we have the following result, where the details of the proof are given in Appendix \ref{appendix:loadbalancing}.

   \begin{lema}\label{rmk: O(1)}
    In the context of Model \ref{Model} let $(X_n^g)_{n\geq 0}$ defined by $X_{n+1}^g \sim p(\ \cdot \ | \ X_n ^g , g )$ for all $n \geq 0$. Consider the notation $f(\gamma) = O ( g ( \gamma) )$ if $f(\gamma) \leq K' g(\gamma)$ then we have that
    \begin{align}
        &J_\gamma ^g (x) =  \E \left(  \sum_{i=0} ^ {+\infty} \gamma^i \left(|X^g_{i+1}|-|X^g_{i}|\right) \ |  \ X_0 = x  \right) = O(1). \label{eq: O(1)} 
    \end{align}
   \end{lema}

\section{Model \ref{Model2}: the free process}\label{sec:model2}


In this section, we present Model \ref{Model2}, which can be seen as a  version of Model \ref{Model} on the whole grid \(\mathbb{Z}^2\). The main goal of this extension is to reduce the impact of boundary states.  To this end, we expand the transition dynamics, initially limited to the positive quadrant, to cover the entire grid.
This simplifies the analysis by eliminating boundary constraints but also offers intrinsic value by creating direct links with models of Self-Interacting Random Walks (see \cite{benjamini-wilson, zerner06, menshikov12} and references therein).


We denote by $\PP(\Z^2)$ the set of probability measures on $\Z^2$. For any $\mu \in \PP(\Z^2)$ s.t. $\E_\mu ( || \xi ||) < + \infty$ we define its drift by $ 
    d(\mu) = \E_\mu ( \xi) \in \R^2.$
\begin{model}\label{Model2}
Consider the triple $(\SS,\A,p)$ with $\SS = {\Z}^2$, $\A = \{r,g\}$ and assume that the transitions probabilities $p( \ y \dado x , a )$  are defined by 
    \begin{equation}
        p (\  x + \xi \dado x, a ) = \mu_a ( \xi ) \text{ for all } \xi \in \Z^2 \text{ and } a \in \{ r,g\},\label{eq:model2 1}
    \end{equation}
    where $\mu_r,\mu_g \in \PP(\Z^2)$ are two fixed measures.
    
    \noindent  
        We further assume that for some $c>0$, 
        \begin{equation}\label{eq: model2 3}
            \E_{\mu_a} ( \exp ( c ||\xi || ) ) < +\infty \quad \text{ for } a=r \ \text{and} \ a=g.
        \end{equation}        
        and that, there exists a unit vector $l \in \R^2$ such that
         \begin{equation}
         \varrho := \min\{ \langle d(\mu_r), l \rangle, \langle d(\mu_g), l \rangle \} > 0 \label{eq:model2 2}
         \end{equation}
          
\end{model}


\begin{rmk}\label{rmk:freeprocess}
    Observe that each $\MM =(\mathcal{S}, \mathcal{A}, p)$ under the assumptions of Model \ref{Model} defines \( \MM' = (\mathcal{S}', \mathcal{A}', p')\) that satisfies the assumptions of Model \ref{Model2}. This is achieved by considering \(\mathcal{S}' = \mathbb{Z}^2\), \(\mathcal{A}' = \{r, g\}\), and setting the transition probabilities as \(p'(x \pm e_i \mid x, a) = \mu_a(\pm e_i)\).
In this sense, we can say that Model \ref{Model2} effectively eliminates the boundary effects present in Model \ref{Model}.

An LLP $(X_n,A_n,\beta_n)_{n\geq 0}$ in $\M$ is determined by an initial state $X_0 \in \Z^2_+$, an initial environment $\beta_0 : \Z^2_{+} \times \{r,g\} \to \R$, a decision rule $\varphi$ and an update rule $\psi$ (see Definition \ref{defn:learningprocess}). By letting $\beta'_0(x,a) = \beta_0(0,a)$ for all $x \in \Z^2$ one extends the initial environment $\beta_0$ to $\Z^2 \times \{r,g\}$ (recall by Definition \ref{defn:learningprocess} the initial environment $\beta_0$ is space invariant). Following this procedure one can verify that the conditions of Definition \ref{defn:learningprocess} are satisfied so that we obtain an LLP $(X_n',A_n',\beta_n ')_{n \geq 0}$ on $\M'$ which we call the \emph{free process} associated to $(X_n,A_n,\beta_n)_{n\geq 0}$. 
\end{rmk}

We present now in Theorem \ref{thm:learningdrift} an important property of LLPs in Model \ref{Model2} which is instrumental for proving our Theorem \ref{thm:MainResult}. Briefly Theorem \ref{thm:learningdrift} states that for any LLP $(X_n,A_n,\beta_n)_{n\geq 0}$ in Model \ref{Model2} there is $L \in \R^2$ such that $\lim_n X_n / n = L $  a.s. and it also characterizes $L = \alpha d(\mu_r)+(1-\alpha)d(\mu_g)$ where $\alpha$ is the average number of times action $r$ is chosen.  For the free process associated to an LLP in Example 1 or 2, we can picture this limiting drift $L$ in Figure \ref{fig:cond model 1} as laying in the dotted line between $d(\mu_r)$ and $d(\mu_g)$. The proof of Theorem \ref{thm:learningdrift} is given in Appendix \ref{sec:proofthm}.

\begin{theorem}\label{thm:learningdrift}
    For any LLP $(X_n, A_n, \beta_n)_{n\geq 0}$  in Model \ref{Model2}, the limit 
    \begin{equation}\label{alpha1}
    \alpha:= 
    \lim_{n \to +\infty} \frac{1}{n} \sum_{i=1} ^n \mathbf{I} ( A_i = r )
    \end{equation}
    exists a.s. and is constant. Therefore, 
    \begin{equation}
        \lim_{n \to +\infty} \dfrac{X_n}{n} = \alpha d(\mu_r) + (1-\alpha) d(\mu_g)
        \ \ \text{a.s.} \label{eq:learningdrift1}
    \end{equation}
\end{theorem}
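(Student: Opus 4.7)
The plan is to exploit the uniformly positive $l$-drift coming from \eqref{eq:model2 2}, together with the translation invariance of $p$ in Model \ref{Model2} and the state-invariance \eqref{homogeneous environment} of $\beta_0$, to build a regenerative decomposition of the trajectory with i.i.d.\ cycles, and then apply the classical SLLN. First I would show that $\langle X_n,l\rangle \to +\infty$ linearly. Setting $\xi_{n+1} = X_{n+1}-X_n$, we have $\langle \xi_{n+1},l\rangle = \langle d(\mu_{A_n}),l\rangle + Z_n$, where $Z_n := \langle \xi_{n+1} - d(\mu_{A_n}),l\rangle$ is a martingale difference for $\F_n := \sigma(X_0,A_0,\ldots,X_n,A_n)$. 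The exponential moment assumption \eqref{eq: model2 3} yields uniformly bounded conditional variances of $Z_n$, so $n^{-1}\sum_{i<n} Z_i \to 0$ a.s.\ by the martingale SLLN; since $\langle d(\mu_{A_i}),l\rangle \geq \varrho > 0$, we get $\liminf_n \langle X_n,l\rangle/n \geq \varrho$ a.s.

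Next I would introduce \emph{deep ladder epochs}: $\tau_1 < \tau_2 < \cdots$ are the times $n$ at which $\langle X_n,l\rangle$ is a strict running maximum of $(\langle X_j,l\rangle)_{j\leq n}$ and $\langle X_m,l\rangle > \langle X_n,l\rangle$ for all $m > n$. A Cram\'er-type estimate using \eqref{eq: model2 3} and \eqref{eq:model2 2} shows that at every classical strict ladder time there is a uniform positive probability $q > 0$ of never returning, so infinitely many deep ladder epochs exist a.s. After $\tau_k$ the trajectory stays strictly above level $\langle X_{\tau_k},l\rangle$ in direction $l$, hence visits only sites not touched before $\tau_k$; by state-invariance \eqref{homogeneous environment} of $\beta_0$ the environment at these sites equals $\beta_0(0,\cdot)$, and by translation invariance of $p$ the shifted process $(X_{\tau_k+n}-X_{\tau_k},A_{\tau_k+n})_{n\geq 0}$ has the law of an LLP started at $(0,\beta_0)$ \emph{conditioned on staying strictly above $0$ in direction $l$}. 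Therefore the cycles $\Delta_k := (\tau_k - \tau_{k-1},\, X_{\tau_k}-X_{\tau_{k-1}},\, N_{\tau_k}^r - N_{\tau_{k-1}}^r)$, where $N_n^r := \sum_{i<n}\mathbf{I}(A_i=r)$, are i.i.d.\ for $k \geq 2$.

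Then I would verify $\E(\tau_2 - \tau_1) < +\infty$: the cycle length is bounded by the first strict ladder time of the conditioned walk, whose tail decays exponentially by a Cram\'er bound on $\langle X_n,l\rangle$ using \eqref{eq: model2 3} and \eqref{eq:model2 2}, and similarly $\E(\|X_{\tau_2}-X_{\tau_1}\|) < +\infty$. The classical SLLN for i.i.d.\ sums then gives $\tau_k/k \to \E(\tau_2-\tau_1)$ and $N_{\tau_k}^r/k \to \E(N_{\tau_2}^r - N_{\tau_1}^r)$ a.s., so $N_{\tau_k}^r/\tau_k \to \alpha := \E(N_{\tau_2}^r - N_{\tau_1}^r)/\E(\tau_2-\tau_1)$; a standard sandwich along $\tau_k \leq n < \tau_{k+1}$ lifts this to $N_n^r/n \to \alpha$ a.s., establishing \eqref{alpha1}. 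For \eqref{eq:learningdrift1}, decompose $X_n - X_0 = \sum_{i<n} d(\mu_{A_i}) + \sum_{i<n} M_i$ with $M_i := \xi_{i+1} - d(\mu_{A_i})$; the (vector) martingale satisfies $n^{-1}\sum_i M_i \to 0$ by the martingale SLLN, while $\sum_{i<n} d(\mu_{A_i}) = N_n^r\, d(\mu_r) + (n-N_n^r)\, d(\mu_g)$, so dividing by $n$ and inserting the previous limit for $N_n^r/n$ yields $X_n/n \to \alpha\, d(\mu_r) + (1-\alpha)\, d(\mu_g)$ a.s.

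The main obstacle is setting up the regenerative structure. Deep ladder epochs are not stopping times, and the LLP dynamics couple past and future through the environment, so a strong Markov argument does not apply directly. One has to exploit the one-sided nature of the post-$\tau_k$ visited region, which lies in a half-space of fresh sites whose environment is known \emph{a priori} to equal $\beta_0(0,\cdot)$ by \eqref{homogeneous environment}; this is what permits a Doob-type identification of the post-$\tau_k$ law as the conditioned LLP and hence the i.i.d.\ structure of the $\Delta_k$. A secondary technical point is the finiteness of the cycle mean, which rests on an exponential tail bound for $\tau_2 - \tau_1$ derived from \eqref{eq: model2 3} combined with the positive drift \eqref{eq:model2 2}.
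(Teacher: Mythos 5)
Your proposal is correct and follows essentially the same route as the paper: your ``deep ladder epochs'' are exactly the paper's record times $T_k$ (defined via the events $D_n\cap M_n$), the identification of i.i.d.\ cycles through the freshness of post-record states plus translation invariance is the paper's Lemma~\ref{prop: local is renewal}, the exponential tail bounds are Proposition~\ref{prop: finite Tk}, and the final Doob/martingale decomposition for \eqref{eq:learningdrift1} matches the paper's argument. The only difference is organizational: the paper abstracts the renewal machinery into Conditions 1--3 for a general class of processes and then verifies them for LLPs in Model~\ref{Model2}, whereas you argue directly on the LLP.
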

 
 Let us provide a few key ideas for the proof of this Theorem. We first derive a regenerative structure for a general class of processes (see Appendix \ref{appendix:mathematicalframework}), that contain LLPs in Model \ref{Model2} (see Appendix \ref{appendix:conditions1-3forLLP}). This regenerative  structure implies the Law of Large Numbers $\lim_n X_n/n = L \text{ a.s.}$ for the macroscopic drift of the process  (see Proposition \ref{prop3}) and, in particular, for LLPs in Model \ref{Model2}. Then it follows that $\alpha$ in \eqref{alpha1} is well defined and that the characterization \eqref{eq:learningdrift1} holds (Appendix \ref{sec:proofthm}).



\section{Main result for LLPs}\label{sec:mainresult}

In this section we present our main result Theorem~\ref{thm:MainResult}, where we show that an agent operating under the Local Learning Process (LLP) can lead to transient behavior. Consequently, we show that such an agent also performs sub-optimally in terms of the total discounted cost. Importantly, this setting includes agents employing asynchronous Q-learning algorithms with a constant step size (see Remark \ref{rmk:Qlearning-LLP}) in queueing problems (e.g. Example 1 and 2), thereby highlighting the practical implications of our findings for commonly used learning algorithms.

This result can be interpreted as follows. 
From the description of LLP, see Definition~\ref{defn:learningprocess},  there is a policy that is applied in every state $x$ that is visited for the first time.  Because of the lack of knowledge of the underlying dynamics, this starting policy might be transient. We will then show that the the LLP agent is unable to recover from this instability, leading to a transient behavior.




Introduce the following notation: 
\begin{align*}
    f(\gamma) = \Omega(g(\gamma)) \ 
\text{ if } \ k g(\gamma) \leq f(\gamma) \leq K g(\gamma),
\end{align*}
for constants $k,K > 0$ and for $\gamma \in (1-\varepsilon,1)$, where $\varepsilon$ is sufficiently small. The detailed proof of the following theorem is provided in Appendix \ref{sec:proofMainresult}.

\begin{thm}\label{thm:MainResult}
    Consider $(\SS, \A, p )$ as in Model \ref{Model}. Then there are $0<q_0<q_1\leq 1$ such that any LLP in $(\SS, \A, p )$ with $q  := \varphi (\ r \dado \beta_0 ( x , \cdot ) ) \in (q_0,q_1)$ is transient.
    Also, 
    \begin{equation} \label{eq:Loss function}
        J_\gamma(x) = \Omega\left( \frac{1}{1-\gamma}\right) \quad \text{ for all } x \in \SS.
    \end{equation}
\end{thm}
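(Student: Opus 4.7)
The strategy is to reduce the transience of the LLP on Model~\ref{Model} to a drift statement on its associated \emph{free process} in Model~\ref{Model2} (Remark~\ref{rmk:freeprocess}), for which Theorem~\ref{thm:learningdrift} already provides a law of large numbers. I would couple the Model~\ref{Model} LLP $(X_n, A_n, \beta_n)$, started at $x_0 = (N, N)$ for $N$ large, with its associated free process $(X_n', A_n', \beta_n')$ so that the two processes coincide up to the first time either visits the boundary $\partial = \{x \in \Z^2_+ : x_1 x_2 = 0\}$. By Theorem~\ref{thm:learningdrift} one has $X_n'/n \to L := \alpha d(\mu_r) + (1-\alpha) d(\mu_g)$ almost surely, for some deterministic $\alpha \in [0,1]$ depending on the LLP parameters. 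The plan is then to identify a range $(q_0, q_1)$ of values of $q := \varphi(r \mid \beta_0(x, \cdot))$ for which $\alpha \in (\alpha_0, \alpha_1)$; Model~\ref{Model}'s hypothesis on $\alpha_0, \alpha_1$ will then force $L$ into the strict interior of $\R^2_+$.

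\textbf{From ballistic drift to escape and cost bound.} Given such a $q$, the free process drifts ballistically into the positive orthant. A Cramér-type large-deviation argument, justified by the exponential-moment bound which Model~\ref{Model2} inherits from the boundedness of jumps in Model~\ref{Model}, then shows that the free process from $(N, N)$ ever reaches $\partial$ with probability at most $e^{-cN}$ for some $c > 0$. On the complementary event the coupling forces the Model~\ref{Model} LLP to coincide with the free process forever, so $X_n / n \to L$ and $|X_n| \to \infty$ linearly. For any bounded $U \subset \SS$, choosing $N$ large enough that $(N, N) \notin U$ then yields a positive probability of never returning to $U$, establishing transience per Definition~\ref{defn:unstable}. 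For the cost bound, the telescoping $c(x, a, y) = |y| - |x|$ combined with Abel summation gives
\[
J_\gamma(x) \;=\; (1-\gamma)\sum_{i \geq 1}\gamma^{i-1}\, \E[|X_i|] \;-\; |x|,
\]
and on the positive-probability escape event $\E[|X_i|] \gtrsim i$, delivering $J_\gamma(x) = \Omega\bigl(1/(1-\gamma)\bigr)$; the matching upper bound is automatic from bounded jumps.

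\textbf{Main obstacle.} The hardest step is pinning down $\alpha$ as a function of $q$ well enough to exhibit a nonempty $(q_0, q_1)$ with $\alpha \in (\alpha_0, \alpha_1)$. The inequality $\langle L, l\rangle \geq \varrho > 0$ from~\eqref{parameter4} holds regardless of $\alpha$, so the free process is ballistic in direction $l$ for every admissible LLP; each site of $\Z^2$ is therefore visited only finitely often a.s., and the expected number of visits per site admits a uniform bound via the regenerative cycles constructed in Appendix~\ref{appendix:mathematicalframework}. The state-invariance of the initial environment ensures that the \emph{first} action at any site is $r$ with probability exactly $q$, so the discrepancy $|\alpha - q|$ comes entirely from the update rule acting on revisits. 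I expect this discrepancy to be controllable by a quantity tied to $1 - \varrho$, with condition~\eqref{parameter5}, $1 - \alpha_1 < \varrho - \alpha_0$, being precisely the geometric statement that this gap fits inside the width of the ``good'' interval, so that $(q_0, q_1)$ is nonempty. Making this quantitative control rigorous through the nonstandard regenerative structure of the LLP is the main technical burden.
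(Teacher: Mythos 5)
Your overall architecture matches the paper's: pass to the free process, use Theorem~\ref{thm:learningdrift} to get a limiting drift $L=\alpha d(\mu_r)+(1-\alpha)d(\mu_g)$, force $L$ into the interior of $\R^2_+$ by controlling $\alpha$ through $q$ and $\varrho$, and convert linear growth of $|X_n|$ into the $\Omega(1/(1-\gamma))$ bound by Abel summation. The control of $\alpha$ that you flag as the ``main technical burden'' is exactly the paper's Lemma~\ref{cor:lowerbound}: the frequency $\widetilde\alpha$ of visits to new states is at least $\varrho$ (each unit of increase of the record of $S_n=\langle X_n,l\rangle$ forces a new state), whence $q\varrho\le\alpha\le 1-(1-q)\varrho$, and \eqref{parameter5} is precisely what makes $q_0=\alpha_0/\varrho<q_1=(\alpha_1+\varrho-1)/\varrho$. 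Your intuition here is correct and matches the paper.

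The genuine gap is in the escape step. You claim that a ``Cram\'er-type large-deviation argument, justified by the exponential-moment bound'' shows the free process started at $(N,N)$ hits the boundary with probability at most $e^{-cN}$. Exponential moments of the jumps do not give this: the one-step drift of an individual coordinate is not uniformly positive over actions (in Example~1 the second coordinate of $d(\mu_g)$ is negative while that of $d(\mu_r)$ is positive), so neither coordinate of $X_n$ is a submartingale and no direct Chernoff/Azuma bound applies to it. Positivity of the coordinates of the \emph{limiting} drift $L$ is a statement about long-run averages over both actions, and converting it into a quantitative bound on the all-time coordinate-wise infimum requires first extracting the i.i.d.\ regenerative increments $X_{T_{k+1}}-X_{T_k}$ (whose mean is $\E(T_2-T_1)\,L$, coordinate-wise positive, with exponential moments) and then bounding the infimum of the resulting random walk together with the within-cycle fluctuations --- this is the content of the paper's Proposition~\ref{prop4} and Corollary~\ref{corA3}, which your proposal bypasses. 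A second, smaller issue: the cost bound is claimed for \emph{all} $x\in\SS$, but your escape event is constructed only from the fresh start $(N,N)$. To get $\E|X_i|\gtrsim i$ from an arbitrary $x$ (including boundary states where the environment has already been perturbed) you need a positive-probability escape, respectively an a.s.\ finite entry time into the cone, from that $x$; the paper obtains this by iterating over record times using irreducibility and the fact that after each record time the future increments are distributed as a fresh conditioned process (Corollary~\ref{corA4}), which your coupling-from-$(N,N)$ argument does not supply.
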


Under the assumptions of Theorem \ref{thm:MainResult}, any LLP exhibits sub-optimality as $\gamma \to 1$. This sub-optimality follows because the optimal discounted cost satisfies the inequality $ J^*_\gamma(x) \leq J^g_\gamma (x) = O(1)$ (Lemma \eqref{rmk: O(1)})  while from \eqref{eq:Loss function} we obtain $J_\gamma (x) = \Omega (1/(1-\gamma)) \to +\infty$ as $\gamma \to 1$.


\begin{rmk}
As an LLP in Model~\ref{Model} (Remark~\ref{rmk:Qlearning-LLP}), Q-learning is subject to Theorem~\ref{thm:MainResult} under a poor choice of initial condition $\widehat{Q}_0$ and/or exploration parameter $\varepsilon$, leading to instability and suboptimality. Numerical experiments (see Section~\ref{sec:numerics}) suggest that this instability is a more general phenomenon. It persists even in settings where Q-learning does not fit the LLP framework, for instance, when using a nonlocal cost function (such as the sum of the coordinates of the current state) or a nonconstant step-size.
\end{rmk}

We now outline the proof of Theorem~\ref{thm:MainResult}; a complete version is provided in Appendix~\ref{sec:proofMainresult}.

\noindent
{\bf Main steps of the proof.}
\begin{enumerate}
    \item Recall Remark~\ref{rmk:freeprocess} where we show that any LLP in Model~\ref{Model} naturally induces an LLP in Model~\ref{Model2}, which we refer to as the associated \emph{free process}.
    
    \item According to Theorem \ref{thm:learningdrift}, the free process has a limiting drift $L$ characterized by both drifts $d(\mu_r)$ and $d(\mu_g)$, and by $\alpha$,  the average number of times the agent applies action $r$. We show in Lemma \ref{cor:lowerbound} how to control $\alpha$ in terms of the parameters of the model, and we use this to show that $L \in \R^2_+$ under the assumptions of Theorem \ref{thm:MainResult} . 

    \item Then we apply the results derived in Appendix 
    \ref{appendix:renewalcones}, to show that our free process possesses a regenerative cycle structure. 
     With this we conclude that there is a random time, call it ``success time'', when the original LLP enters a cone (contained in the interior of the positive orthant) and never leaves it after this time. This establishes the transience part of Theorem \ref{thm:MainResult}.
    \item Finally after this ``success time'' the original LLP behaves exactly as the associated free process conditioned to stay in this cone. In particular the regenerative structure from Appendix \ref{appendix:renewalcones} holds for the original LLP, too. This regenerative structure enables us to show \eqref{eq:Loss function}, that completes the proof of our Theorem \ref{thm:MainResult}.    

\end{enumerate}

\section{Numerics}\label{sec:numerics}


    In this section we perform numerical experiments in the load balancing problem (Example 1) to illustrate the behavior of Q-learning in various settings combining constant and nonconstant step size and local and nonlocal cost function. As nonconstant step size  we consider $\delta_n = 1/(n+1)$, which satisfies the classical Robbins-Monro conditions. As a nonlocal cost function we consider the total number of jobs, i.e. $ c( x , a , x') = x_1 + x_2$, which is a standard objective  considered in the literature. In particular, we consider the following four settings:
    (i) (setting of Theorem~\ref{thm:MainResult}) constant step-size and local cost function as given by \eqref{eq:cost}, (ii) constant step-size and nonlocal cost function, (iii) nonconstant step-size and local cost function, and (iv) nonconstant step-size and nonlocal cost function.  

\begin{figure}[ht]
    \centering
           \includegraphics[width=0.7\linewidth]{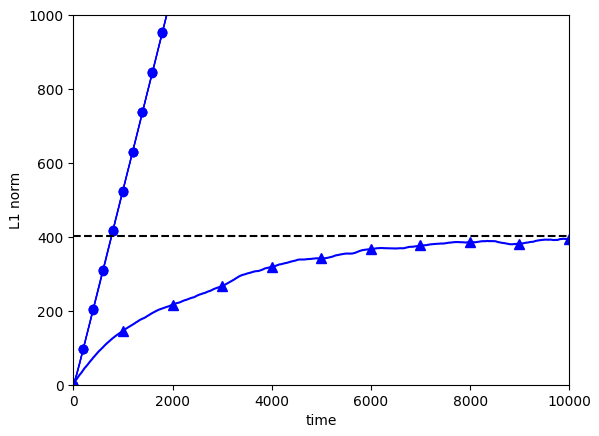}
           \caption{Evolution over time of the average L1-norm for Q-learning in settings (i)-(iv) (circles) and ``green'' policy (triangles). The dashed line indicates the value $J ^g$. }
           \label{fig: L1norm}
\end{figure}

\begin{figure}[ht]
    \centering
           \includegraphics[width=0.7\linewidth]{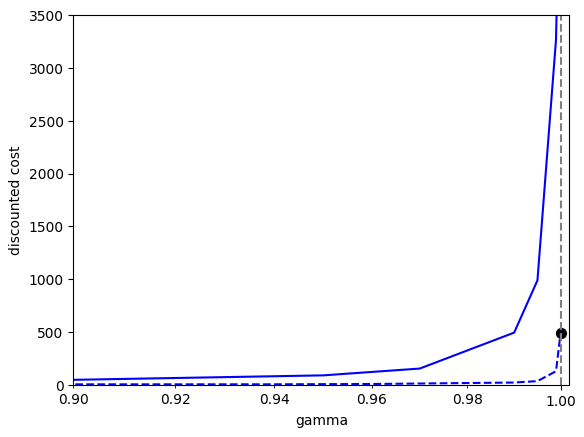}
           \caption{The discounted cost $J_\gamma$ for the Q-learning algorithm (solid) and $J_\gamma ^g$ (dashed) as a function of $\gamma$.}
           \label{fig: discounted cost}
\end{figure}

Throughout the experiments, we fix the following choice of parameters for the load balancing example (Example 1): $\lambda = 1.5, \mu_1 = 0.1, \ \mu_2 = 0.35, \ p_r = 0.45,$ $p_g = 0.8$ and $\widetilde{\mu} = 10.8$. For the parameters of Q-learning we consider the discounting factor $\gamma = 0.1$, exploration parameter $\eps = 0.1$, step-size $\alpha = 0.2$, initial condition $X_0 = (0,0)$ and initialization $\widehat{Q}_0 ( x, a ) = 0 $.


In Figure~\ref{fig: L1norm}, we simulate $N = 1000$ trajectories up to time $n = 10000$ and we plot over time the average  $l_1$ norm $|x| = x_1+x_2$ for $(X_n^g)_{n \geq 0}$ and $(X_n)_{n\geq 0}$ derived from Q-learning for settings (i)-(iv) with initial condition $X_0 = (0,0)$. The dashed line indicates the value $J^g((0,0))$. We observe that the curve for the ``green'' policy converges to $J^g((0,0))$ due to ergodicity  (recall Lemma~\ref{lemma:lyapunovgreen}).
The four curves for Q-learning in the settings (i)-(iv) are visually indistinguishable, suggesting that Q-learning remains transient even when using decreasing step-sizes and nonlocal costs.

In Figure~\ref{fig: discounted cost}, under the setting of Theorem~\ref{thm:MainResult}, we plot the discounted cost functions $J_\gamma(X_0)$ and $J_\gamma ^g(X_0)$ as a function of 
 $\gamma \in [0.9,1]$ for fixed $X_0 = (0,0)$. For this we simulate $N = 100$ values of $J_\gamma$, calculated for a $n = 10000$ path-length, and plot an average for each $\gamma$. We recall that from Remark~~\eqref{rmk: O(1)}, we know that as $\gamma \to 1$ (by dominated convergence theorem) $J_\gamma ^g \to J^g = \E \left(  |X_\infty^g| \right) - |X_0|$, a value we represent with a  big circle  in Figure~\ref{fig: discounted cost}. As predicted by Theorem~\ref{thm:MainResult}, we observe that $J_\gamma$ increases with $\gamma$ and has an asymptote at $\gamma=1$, whereas 
 $J_\gamma ^g$ stays finite (Lemma \ref{rmk: O(1)}).

\section{Conclusion, extensions and future work}\label{sec:conclusion}

Our main result (Theorem~\ref{thm:MainResult}) establishes that locally operating learning algorithms—formally modeled here as Local Learning Processes (LLPs)—exhibit a crucial sensitivity to initial conditions in the context of infinite state spaces. We demonstrate that an initial condition leading to transient and suboptimal behavior generates a trajectory from which these local algorithms cannot recover.
This finding implies that, in infinite-state settings, ensuring favorable initial conditions for local learning is not merely beneficial but essential. A critical direction for future research is therefore the development of methods to systematically identify such stable initial configurations. Once identified, these configurations pave the way for rigorous theoretical guarantees on subsequent algorithmic convergence.
This latter challenge is of broad interest, as it applies to both local and nonlocal algorithms. Convergence guarantees for model-free algorithms in infinite state spaces remain an open and poorly understood area.

We finally note that an interesting challenge stemming from our work, is to adapt  our analytical framework, in particular Appendix~\ref{appendix:mathematicalframework}, to accommodate  models with heavy-tailed distributions of jumps.

\section*{Acknowledgments}

The authors would like to thank Sean Meyn for valuable discussions. Research was partially supported by the French "Agence Nationale de la Recherche (ANR)" through the project ANR-22-CE25-0013-02 (ANR EPLER).

\printbibliography

\clearpage
\begin{table}[h] \label{glossary}
\centering
\begin{tabular}{>{}p{0.11\textwidth} | p{0.8\textwidth}}
\hline
 \textbf{Symbol }& \textbf{Definition}  \\
\hline
$\SS, \A, p$ & State space, action space and transition probabilities.  \\
\hline
$\mu_a, d(\mu_a)$ & The law of jumps for action $a$ in interior points and its  corresponding drift.  \\
\hline
$l$ & Common vector for drifts. \\
\hline
$\varrho$ & Magnitude of the minimal component between drifts and vector $l$. \\
\hline
$\alpha$ &  Average number of times the ``red'' action is selected.\\
\hline
$X_n,A_n,\beta_n$ & State, action and environment at time $n$.  \\
\hline
$S_n$ & Projection of $X_n$ onto $l$.  \\
\hline
$L$ & Limiting drift of $(X_n)_{n\geq 0}$.\\
\hline
$\R^d, \R^d_+$ & The space of all $d$-dimensional real-valued vectors; its restriction to vectors with nonnegative coordinates. \\
\hline
$\Z^d$, $\Z^d_+$ & The space of all $d$-dimensional integer-valued vectors; its restriction to vectors with nonnegative coordinates. \\
\hline
$\langle \cdot , \cdot \rangle $, $||\cdot||$ & The usual euclidean inner product and norm. \\
\hline
$|\cdot|$ & The $l_1$ norm of a vector.  \\
\hline
\bottomrule
\end{tabular}
\caption{}
\label{tab:glossary}
\end{table}

\appendix

\section{Mathematical framework}\label{appendix:mathematicalframework}

In this Appendix, we derive the main mathematical tools used in the proof of Theorem~\ref{thm:MainResult}. These tools are developed for a class of processes in $\R^d$ 
 defined by Conditions 1–3 -- a class that is more general than the one of LLPs in Model~\ref{Model2} (see Appendix~\ref{appendix:conditions1-3forLLP}).

We begin in Appendix~\ref{appendix:stochasticassumptions}, where we fix notation and introduce Conditions 1 and 2. Then, in Appendix~\ref{appendix:strictsubmartingales}, we present several preliminary consequences of these conditions, with Proposition~\ref{prop: finite Tk} playing a particularly important role.

Next, in Appendix~\ref{appendix:renewal}, we introduce Condition 3 and prove in Proposition~\ref{prop3} that Conditions 1–3 together imply a regenerative structure for the class of stochastic processes under consideration. We also show that for each process in this class there is a vector $L$, such that the process drifts  to infinity along direction $L$.

Finally, Appendix~\ref{appendix:renewalcones} addresses the existence of a more refined regenerative structure in which 
$L$ lies in the positive orthant. This structure is a central component in the proof of Theorem~\ref{thm:MainResult}.

\subsection{Stochastic assumptions}\label{appendix:stochasticassumptions}
Let $(X_n)_{n\geq 0}$ be a random sequence taking values in $\R^d$ where $d \geq 1$ (note that our main result Theorem \ref{thm:MainResult} only requires the analysis for $d=2$). Here $X_0\in \R^d$ is an initial state that may be random.
 
Let ${\cal F}_n$ be an increasing sequence of sigma-algebras such that $ \sigma(\{X_0,\ldots,X_n\}) \subseteq {\cal F}_n$ for any $n \geq 0$.

{\bf Condition 1.} There exists a unit vector $l\in \R^d$ such that the scalar products $
S^{(l)}_n := \langle X_n,l\rangle$ have a uniformly positive drift: 
\begin{align}\label{G0}
\varrho: = \essinf \inf_{n \geq 0} \mathbb{E} \left(S^{(l)}_{n+1} - S_n^{(l)} \ | \ {\cal F}_n \right)  > 0 
\end{align}
i.e., $(S_n ^{(l)})_{n\geq 0}$ forms a strict submartingale (w.r.t. $\{{\cal F}_n\}$).

The quantity $\varrho$ defined by \eqref{G0} can be linked to \eqref{eq:model2 2} in the context of LLPs in Model \ref{Model2}. We refer to Appendix \ref{appendix:conditions1-3forLLP} for details.

To simplify the notation, we assume that $l$ is fixed and omit, for short, the upper index $l$ writing $S_n:=S_n^{(l)}$. Further, we let
$\xi_n = X_{n}-X_{n-1}, n\ge 1$ and $\psi_n = \langle \xi_n, l \rangle$.  

In what follows, it is convenient to use  the following representation. Let ${\cal G} = \{G\}$ be a measurable space of probability distributions on ${\R}^d$ with sigma-algebra ${\cal B_G}$ such that, for any $G\in {\cal G}$, 
\begin{align}\label{G1}
\int_{{\mathbb R}^d} \langle x,l \rangle dG(x) \in [\varrho,\infty)
\end{align}
Then, for any $n$, given ${\cal F}_n$, the random variable $\xi_{n+1}$ has a distribution belonging to ${\cal G}$. More precisely, the random variable $\xi_{n+1}$ has a distribution $G_{n+1}$ that is a measure-valued random variable taking valued in ${\cal G}$, that is measurable w.r.t. ${\cal F}_n$. Further, given $G_{n+1} = G\in {\cal G}$, the random variable $\xi_{n+1}$ does not depend on ${\cal F}_n$. 

{\bf Notation.} For any measurable function $f$ and any distribution $G\in {\cal G}$,
we let \begin{align*}
&{\mathbb P}_G (f(\xi) \in \cdot) := {\mathbb P}
(f(\xi_{n})\in \cdot \ | \ G_{n} = G)  \text{ and} \\ 
&{\mathbb E}_G (f(\xi)) := {\mathbb E}
(f(\xi_{n}) \ | \ G_{n} = G), \hfill 
\end{align*} where the right-hand sides do not depend on $n$.
Then 
\eqref{G0} may be rewritten as
\begin{equation}\label{G3}
\varrho:= \inf_{G} {\mathbb E}_G \left( \langle \xi, l \rangle \right) > 0.
\end{equation}

\textbf{Condition 2.} 
We assume that there exists $c>0$ such that 
\begin{equation}\label{G4}
\sup_{G \in \G} {\mathbb E}_G (\exp \left({c||\xi||}\right)) < \infty.
\end{equation}
It follows from \eqref{G4} that
\begin{equation}\label{Ge}
\widehat\varrho:=\sup_{G \in \G} {\mathbb E}_G ( \langle \xi, l \rangle ) < \infty
\end{equation}

We note that LLPs in Model~\ref{Model2} can be cast within this framework. In that setting, 
$G = \mu_a$ for $a \in \{r,g\}$, \eqref{G1} is derived from \eqref{eq:model2 2} and \eqref{eq: model2 3}, and \eqref{G4} is exactly \eqref{eq: model2 3}. Further details are provided in Appendix \ref{appendix:conditions1-3forLLP}.

\begin{remark}\label{UI}
One can show that \eqref{G4} also implies that the sequences of random variables $\{S_n/n\}$ is uniformly integrable. This will be formally justified later in the context of Model~\ref{Model2}.
\end{remark}

\begin{remark}
    Condition~\eqref{G4} may be relaxed; however, the existence of exponential moments significantly simplifies the proofs that follow.
\end{remark}

\subsection{Strict submartingales: Results and Proofs}\label{appendix:strictsubmartingales}

Our analysis is grounded in the following mathematical framework. We begin by employing fundamental martingale techniques to establish a key preliminary construction. Specifically, we consider a strict submartingale in 
${\mathbb R}^d$
 with increments following light-tailed distributions.

Assuming Conditions 1 and 2, we show in Proposition~\ref{prop: finite Tk} that there exist infinitely many time instants $T_k$ 
with the following properties:\\
First, each $T_k$ 
 is a “record time,” meaning that the projection of the trajectory onto the common direction $L$ 
remains below its value at time $T_k$
 for all earlier times. Second, after time $T_k$,
the projection stays above its value at $T_k$.

Consequently, the trajectory is naturally partitioned into intervals between successive record times. We then demonstrate that the lengths of these intervals have finite exponential moments.\\

We assume Conditions 1 and 2 to hold throughout the rest of this section.

\begin{proposition}\label{prop1}
For any $0\leq \varepsilon < \varrho$ there exist positive constants $c_1=c_1(\eps)$ and $c_2=c_2(\eps)$ such that, for any $x\ge 0$ and for all $n=0,1,\ldots$, 
\begin{align}
p_{n,x}(\eps) &:= {\mathbb P}(S_n-S_0 \le  \eps n  -x \ | \ {\cal F}_0) \notag  \\ &\leq
\exp(-c_1x-c_2n) \quad \text{a.s.} \label{P1}
\end{align}

There are also positive constants $c_3$ and $c_4$ such that, for any $x\ge 0$ and for all $n=0,1,\ldots$
\begin{align}
 & {\mathbb P}(S_n-S_0 \ge 2n\widehat{\varrho}+x \ | \ {\cal F}_0)  \leq
\exp(-c_3x-c_4n) \quad \text{a.s.}
\label{P100}
\end{align}
\end{proposition}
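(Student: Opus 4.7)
The plan is to apply the conditional Cramér--Chernoff (exponential Markov) inequality to $S_n-S_0=\sum_{k=1}^n \psi_k$, exploiting the uniform lower drift from Condition~1 and the uniform exponential moment bound of Condition~2. First I would fix $t>0$ (to be chosen small) and write, for the first bound,
\begin{equation*}
{\mathbb P}(S_n-S_0 \le \varepsilon n - x \mid {\cal F}_0) \;\le\; e^{t(\varepsilon n - x)}\, {\mathbb E}\bigl(e^{-t(S_n-S_0)} \mid {\cal F}_0\bigr).
\end{equation*}
Using the tower property iteratively, together with the fact that given ${\cal F}_{k-1}$ the increment $\xi_k$ has distribution $G_k\in{\cal G}$ and is otherwise independent of the past, one gets
\begin{equation*}
{\mathbb E}\bigl(e^{-t(S_n-S_0)}\mid {\cal F}_0\bigr)
\;\le\; \Bigl(\sup_{G\in{\cal G}} {\mathbb E}_G\bigl(e^{-t\psi}\bigr)\Bigr)^{\!n},
\end{equation*}
so the whole problem reduces to a uniform-in-$G$ bound on the one-step moment generating function.

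To bound ${\mathbb E}_G(e^{-t\psi})$ for $t>0$ small, I would use the second-order Taylor estimate $e^{-u}\le 1-u+\tfrac{1}{2}u^2 e^{|u|}$. This gives
\begin{equation*}
{\mathbb E}_G(e^{-t\psi}) \;\le\; 1 - t\,{\mathbb E}_G(\psi) + \tfrac{t^2}{2}\,{\mathbb E}_G\bigl(\psi^2 e^{t|\psi|}\bigr).
\end{equation*}
By Condition~2, for $t\le c/2$ the quantity ${\mathbb E}_G(\psi^2 e^{t|\psi|})$ is bounded by ${\mathbb E}_G(\|\xi\|^2 e^{t\|\xi\|})$ which is uniformly bounded in $G$ by some constant $K$ (use $\|\xi\|^2 e^{t\|\xi\|}\le K_0 e^{c\|\xi\|}$ for $t\le c/2$). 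Combined with ${\mathbb E}_G(\psi)\ge \varrho$ from \eqref{G3}, this yields ${\mathbb E}_G(e^{-t\psi})\le e^{-t\varrho + Kt^2}$. Since $\varepsilon<\varrho$, choosing $t>0$ small enough (concretely, $t\le \min\{c/2,(\varrho-\varepsilon)/(2K)\}$) forces $-t\varrho+Kt^2\le -t\varepsilon-t(\varrho-\varepsilon)/2$, so iterating and substituting back gives
\begin{equation*}
{\mathbb P}(S_n-S_0 \le \varepsilon n - x \mid {\cal F}_0) \;\le\; e^{-tx - nt(\varrho-\varepsilon)/2},
\end{equation*}
i.e.\ \eqref{P1} with $c_1=t$ and $c_2=t(\varrho-\varepsilon)/2$.

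The upper tail \eqref{P100} follows from the symmetric argument with $e^{+t\psi_k}$ in place of $e^{-t\psi_k}$. The same Taylor expansion gives ${\mathbb E}_G(e^{t\psi})\le 1+t\widehat{\varrho}+K't^2\le e^{t\widehat{\varrho}+K't^2}$, and choosing $t>0$ small enough that $K't\le \widehat{\varrho}/2$ bounds this by $e^{3t\widehat{\varrho}/2}$, so
\begin{equation*}
{\mathbb P}(S_n-S_0 \ge 2n\widehat{\varrho}+x \mid {\cal F}_0) \;\le\; e^{-t(2n\widehat{\varrho}+x)+3nt\widehat{\varrho}/2} \;=\; e^{-tx-nt\widehat{\varrho}/2},
\end{equation*}
giving \eqref{P100} with $c_3=t$ and $c_4=t\widehat{\varrho}/2$. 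The only real subtlety is ensuring the $O(t^2)$ correction in the Taylor expansion is controlled uniformly over $G\in{\cal G}$; this is precisely where Condition~2 is used, and it is not an obstacle so much as the reason the condition was imposed. The rest is a standard Chernoff-type computation and a careful choice of the free parameter $t$.
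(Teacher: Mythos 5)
Your proof is correct and follows essentially the same route as the paper: a conditional Chernoff bound, factorization of the exponential moment via the tower property, and a uniform-in-$G$ control of the one-step moment generating function near $t=0$ using Condition~2. The only (minor, and welcome) difference is that you make the uniformity over $G$ explicit through the second-order Taylor estimate, whereas the paper asserts the corresponding properties of $F_G(\pm t)$ directly.
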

\begin{proof}
The proofs of \eqref{P1} and \eqref{P100} are similar, we prove \eqref{P1} only. 
We apply Chernoff's (that is ``exponential Markov'') inequality:
\begin{align}
& \P( S_0 -S_n \geq x - \eps n \ | \ \F_0 ) \notag \\
& \leq  \E( \exp ( t(S_0 - S_n) )\ | \ \F_0 ) \exp ( t \eps n - t x) \notag \\
& = \E \left( \prod_{i=1}^n \exp ( -t(S_i - S_{i-1}) \ | \ \F_0 \right) \exp ( t \eps n - t x) \notag \\
& \leq \left( \sup_G \E_G ( \exp ( -t \langle \xi , l \rangle  ) ) \right)^n \exp ( t \eps n - t x) = \notag \\
& \leq \left( \sup_G \exp ( F_G(-t) ) \right)^n \exp ( t \eps n - t x), \label{eq:chernov}
\end{align}

where $F_G(t) = \log \E_G ( t \langle \xi, l \rangle)$. Condition 2 implies that there is $t_0 > 0$ such that $F_G ( t )$ is well defined for all $t \in (-t_0,t_0)$ and all $G \in \mathcal{G}$. Moreover we can consider $t_0$ such that for all $t \in (0,t_0)$
        \begin{equation}\label{eq:F1}
        F_G(t) \leq 2 \widehat{\varrho}
    \end{equation}
    and for each $\varrho' < \varrho$ there is $0< t < t_0$ such that
        \begin{equation}\label{eq:F2}
        F_G(-t) < -t \varrho'.
    \end{equation} We conclude \eqref{P1} from the inequality \eqref{eq:chernov} and \eqref{eq:F2}. 
\end{proof}

\begin{cor}\label{cor1}
There exist positive constants $c_1$ and $C$ such that, for any $x>0$ and for any $m=0,1,\ldots$ 
\begin{align}
 p^{(m)}(x) &:= {\mathbb P}(\inf_n(S_{m+n}-S_m) < -x \ | {\cal F}_m)   \notag \\
& \leq C \exp (-c_1x), \quad \text{a.s.} \label{P2} 
\end{align}
\end{cor}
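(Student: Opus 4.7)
The plan is to obtain Corollary \ref{cor1} as a direct consequence of Proposition \ref{prop1} via a union bound. The event $\{\inf_{n \geq 0}(S_{m+n} - S_m) < -x\}$ is the union over $n \geq 0$ of the events $\{S_{m+n} - S_m < -x\}$, and Proposition \ref{prop1} supplies exponential decay both in $x$ and in $n$ for each of these events. Summing the geometric factor in $n$ then yields a bound that is still exponentially small in $x$.

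The first step is to observe that Proposition \ref{prop1} applies equally well when the initial filtration $\F_0$ is replaced by $\F_m$ for any fixed $m \geq 0$. This is immediate from its proof: the Chernoff/exponential Markov argument leading to \eqref{P1} relies only on the fact that the conditional distributions of the successive increments $\xi_{m+1}, \xi_{m+2}, \ldots$ belong to the class $\mathcal{G}$ satisfying \eqref{G3} and \eqref{G4}, which is precisely the content of Conditions 1--2 and is uniform in time. Applying the proposition with $\varepsilon = 0$ to the shifted sequence therefore gives that, almost surely,
\begin{equation*}
\mathbb{P}\bigl(S_{m+n} - S_m \leq -x \,\big|\, \F_m\bigr) \leq \exp(-c_1 x - c_2 n),
\end{equation*}
with constants $c_1, c_2 > 0$ independent of $m$ and $x$.

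The second step is a union bound. Since the $n=0$ term vanishes for $x>0$, we obtain
\begin{equation*}
p^{(m)}(x) \leq \sum_{n=0}^{\infty} \mathbb{P}\bigl(S_{m+n} - S_m < -x \,\big|\, \F_m\bigr) \leq \exp(-c_1 x) \sum_{n=0}^{\infty} \exp(-c_2 n) = \frac{\exp(-c_1 x)}{1 - \exp(-c_2)},
\end{equation*}
so \eqref{P2} holds with $C := (1 - \exp(-c_2))^{-1}$. I do not anticipate any substantive obstacle: the only point requiring explicit justification is the shift-invariance of Proposition \ref{prop1}, which is essentially automatic from the uniform formulation of Conditions 1--2 and does not require any Markov or strong Markov property of $(X_n)_{n \geq 0}$.
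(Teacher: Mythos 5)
Your proposal is correct and follows essentially the same route as the paper: the paper also reduces to the case $m=0$ (implicitly using the shift-invariance you make explicit), applies Proposition \ref{prop1} with $\eps=0$, and sums the geometric series in $n$ from the union bound to get $C=\sum_n\exp(-c_2(0)n)$. Your explicit justification of why the proposition transfers from $\F_0$ to $\F_m$ is a welcome clarification but not a departure from the paper's argument.
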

\begin{proof}
It is enough to consider the case $m=0$. Using the union bound and Proposition \ref{prop1} we obtain
\begin{align*}
p^{(0)} (x) &\leq\sum_n p_{n,x} (0) \\
& \leq \sum_n \exp (-c_1 (0) x -c_2(0)n) \\
& = C \exp (-c_1 x) \quad \text{a.s.} 
\end{align*}
where $c_1 = c_1(0)$ and $C = \sum_n \exp (-c_2 (0) n) $. 
\end{proof}
For $m=0,1,\ldots$, introduce the events
\begin{equation}\label{Dm}
D_m = \left\{ \inf_{k\geq 1} (S_{m+k}-S_m)>0 \right\}.
\end{equation}
\begin{cor}\label{cor2}
There exists $\varepsilon >0$, such that, for any $m=0,1,\ldots$, 
\begin{equation}\label{P3}
P_m:= {\mathbb P} (D_m \ | \ {\cal F}_m)\ge \varepsilon \quad \text{a.s.}
\end{equation}
\end{cor}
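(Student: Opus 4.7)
The plan is to bound $P_m$ from below uniformly in $m$ via an exponential supermartingale argument. The first step is to produce $\lambda_0 > 0$ and $\kappa \in (0,1)$, independent of $m$ and of $G \in \mathcal{G}$, such that $\sup_G \E_G(e^{-\lambda_0 \psi}) \leq \kappa$, where $\psi = \langle \xi, l \rangle$. Set $\phi_G(\lambda) := \E_G(e^{-\lambda \psi})$. By \eqref{G4}, $\phi_G$ is smooth on $[0, c]$; Condition 1 gives $\phi_G'(0) = -\E_G(\psi) \leq -\varrho$, while for $t \in [0, c/2]$ the second derivative $\phi_G''(t) = \E_G(\psi^2 e^{-t\psi})$ is dominated by $\E_G(\|\xi\|^2 e^{(c/2)\|\xi\|})$, which is uniformly bounded in $G$ by Condition 2. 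A Taylor expansion then yields, for $\lambda \in [0, c/2]$, $\phi_G(\lambda) \leq 1 - \lambda \varrho + \lambda^2 C$ with $C$ independent of $G$. Choosing $\lambda_0$ small enough that $\lambda_0 C < \varrho/2$ gives $\kappa := \sup_G \phi_G(\lambda_0) \leq 1 - \lambda_0 \varrho/2 < 1$.

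Next, I fix $m$, set $\widetilde S_n := S_{m+n} - S_m$ and define $M_n := e^{-\lambda_0 \widetilde S_n}\kappa^{-n}$. Since $G_{m+n+1}$ is $\mathcal{F}_{m+n}$-measurable and $\phi_{G_{m+n+1}}(\lambda_0) \leq \kappa$, a direct computation shows that $\{M_n\}_{n\geq 0}$ is a nonnegative supermartingale with respect to $\{\mathcal{F}_{m+n}\}_{n\geq 0}$, with $M_0 = 1$.

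Finally, let $\tau := \inf\{k \geq 1 : \widetilde S_k \leq 0\}$, so that $D_m = \{\tau = +\infty\}$. Optional stopping at the bounded stopping time $\tau \wedge n$ yields $\E(M_{\tau \wedge n} \mid \mathcal{F}_m) \leq 1$ almost surely. On $\{\tau \leq n\}$ one has $e^{-\lambda_0 \widetilde S_\tau} \geq 1$ (since $\widetilde S_\tau \leq 0$) and $\kappa^{-\tau} \geq \kappa^{-1}$ (since $\tau \geq 1$ and $\kappa < 1$), so $M_{\tau \wedge n} \geq \kappa^{-1}$ on that event. Therefore $\kappa^{-1}\, \P(\tau \leq n \mid \mathcal{F}_m) \leq 1$; letting $n \to \infty$ gives $\P(\tau < \infty \mid \mathcal{F}_m) \leq \kappa$, and thus $P_m \geq 1 - \kappa =: \varepsilon$ uniformly in $m$.

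The main technical obstacle lies in the first step: securing the uniform bound $\kappa < 1$ requires a Taylor estimate of $\phi_G$ whose second-order (and higher) terms are dominated by a single constant valid across all $G \in \mathcal{G}$ simultaneously, which is precisely where the uniform exponential moment bound of Condition 2 is essential. Once this uniform estimate is in place, the supermartingale identity and optional stopping are entirely routine.
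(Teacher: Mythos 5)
Your argument takes a genuinely different route from the paper, and its core mechanics are sound. The paper deduces Corollary~\ref{cor2} from Corollary~\ref{cor1}: it picks $x_0$ with $p^{(0)}(x_0)<1$, forces the first $j$ increments to each exceed $x_0/j$ (an event of conditional probability at least $\Delta^j>0$, uniformly over environments), and then invokes Corollary~\ref{cor1} to keep the subsequent walk from ever dropping by more than $x_0$. Your single supermartingale $M_n=e^{-\lambda_0\widetilde S_n}\kappa^{-n}$ together with optional stopping replaces both ingredients at once, gives the explicit constant $\varepsilon=1-\kappa$, and sidesteps the paper's (true but unjustified there) claim that $\inf_G\P_G(\psi>x_0/j)>0$ for some $j$. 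Both proofs ultimately rest on the same uniform estimate $\sup_G\E_G(e^{-\lambda_0\psi})<1$, which the paper obtains inside Proposition~\ref{prop1} essentially as you do; your Taylor bound with the second derivative dominated via Condition~2 is correct, using $|\psi|\le\|\xi\|$ and $x^2e^{(c/2)x}\lesssim e^{cx}$.

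There is, however, one step that needs repair: the identification $D_m=\{\tau=+\infty\}$. By \eqref{Dm}, $D_m=\{\inf_{k\ge1}\widetilde S_k>0\}$, which is strictly stronger than $\{\widetilde S_k>0 \text{ for all }k\ge1\}=\{\tau=+\infty\}$: since $l$ is an arbitrary unit vector and the framework allows general jump laws, the values of $\widetilde S_k$ may be dense near $0$, so positivity of every term does not force the infimum to be positive. You have therefore lower-bounded the probability of a superset of $D_m$, which does not yield \eqref{P3}. The fix is immediate: stop instead at $\tau_\delta:=\inf\{k\ge1:\widetilde S_k\le\delta\}$ for a small $\delta>0$, so that $\{\tau_\delta=+\infty\}\subseteq D_m$; on $\{\tau_\delta\le n\}$ one has $M_{\tau_\delta\wedge n}\ge e^{-\lambda_0\delta}\kappa^{-1}$, and the same optional-stopping computation gives $\P(\tau_\delta<\infty\mid\F_m)\le\kappa e^{\lambda_0\delta}$, which is $<1$ once $\delta<\lambda_0^{-1}\log(1/\kappa)$. (Alternatively one could argue that $S_n\to+\infty$ a.s.\ so the two events agree up to a null set, but that requires an additional argument you have not supplied.) With this one-line change the proof is complete.
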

\begin{proof}
It is enough to consider the case $m=0$.  Recall the notation $\psi_n = <\xi_n,l>$. 
Let $x_0\ge 0$ be such that $p^{(0)}(x_0)<1$ (this can be done by Corollary \ref{cor1})
and let $j\ge 1$ be such that
\begin{equation*}
\Delta := \inf_G {\mathbb P}_G (\psi > x_0/j) >0.
\end{equation*}
Then if $U_j = \inf_k (S_{j+k}-S_j)$ we have that $D_0 \supset \{ \psi_1 > x_0/j, \ldots,
\psi_j > x_0/j, U_j \ge -x_0 \} $. Then
\begin{align*}
P_0 &\geq 
{\mathbb P} (\psi_1 > x_0/j, \ldots,
\psi_j > x_0/j, U_j \ge -x_0
\ | \ {\cal F}_0)\\
&\geq 
\Delta^j \cdot  \essinf 
{\mathbb P} ( U_j \ge -x_0
\ | \ {\cal F}_j) \\
&\geq
\Delta^j (1-p^{(0)}(x_0)) =: \varepsilon >0, \quad
a.s.
\end{align*} where the inequality from the second to the third line is justified by the previous corollary. \qed
\end{proof}

From the previous corollary it follows that
\begin{equation}\label{Phat}
    \widehat{P}:= \inf_m P_m  \geq \varepsilon \quad \text{a.s.}
\end{equation}
Consider $M_n = \{ S_n > S_k \text{ for all } k < n\}$ and let 
\begin{equation}\label{T1}
T_1 = \inf \{n\geq 0: {\mathbf I}(D_n \cap M_n) = 1 \}\leq \infty
\end{equation}
so that at the random time $T_1 \geq 0$ the process $(S_{n})_{n\geq 0}$ attains a maximum value and never goes below that value in the future. Inductively we define for $k=1,2,\ldots$ if $T_k<\infty$, the random time 
\begin{equation}
T_{k+1}= \inf \{ n > T_k : \mathbf{I} ( D_n \cap M_n ) = 1 \}\leq \infty. 
\end{equation}

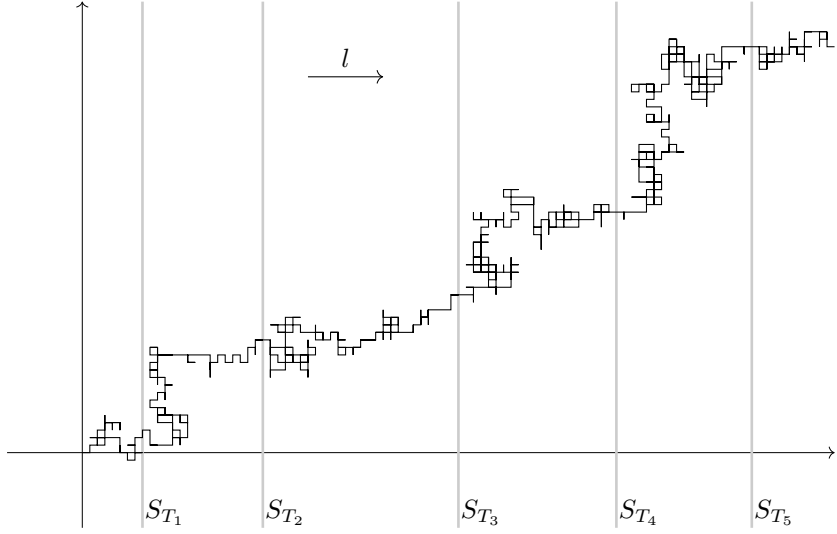
\begin{figure}[ht]
    \centering
\pgfmathsetseed{321618684} 

\newcounter{step}
\def\pathdirections{}
\setcounter{step}{0}
\loop
    \stepcounter{step}
    \pgfmathrandominteger{\direction}{0}{9}
    \ifnum\thestep=1
        \edef\pathdirections{\direction}
    \else
        \edef\pathdirections{\pathdirections,\direction}
    \fi
\ifnum\thestep<954\repeat

\begin{tikzpicture}[scale=0.1]
    \draw[->, line width=0.25pt, black] (-10,0) -- (100,0) ;
    \draw[->, line width=0.25pt, black] (0,-10) -- (0,60) ;
    \draw[-, line width=1pt, gray!40] (8,-10) -- (8,60);
\node[black] at (11,-8)  {$S_{T_1}$};
\draw[-, line width=1pt, gray!40] (24,-10) -- (24,60);
\node[black] at (27,-8)  {$S_{T_2}$};
    \draw[-, line width=1pt, gray!40] (50,-10) -- (50,60);
    \node[black] at (53,-8)  {$S_{T_3}$};
    \draw[-, line width=1pt, gray!40] (71,-10) -- (71,60);
\node[black] at (74,-8)  {$S_{T_4}$};
\draw[-, line width=1pt, gray!40] (89,-10) -- (89,60);
\node[black] at (92,-8)  {$S_{T_5}$};
\draw[black, ->] (30,50) -- (40,50) node[above, midway] {$l$};
    \draw [thin, black] (0,0) \foreach \d in \pathdirections {
        \ifnum \d <3
            -- ++(1,0) 
        \else
            \ifnum \d <6
                -- ++(0,1) 
            \else
                \ifnum \d <8
                    -- ++(-1,0) 
                \else
                    -- ++(0,-1) 
                \fi
            \fi
        \fi
    };

\end{tikzpicture}
\caption{Representation of a sample path $(X_n)_{n\geq 0}$ and the levels $S_{T_k}$ in direction $l$ .}
\end{figure}

\begin{proposition}\label{prop: finite Tk}
The random variables $T_1,T_2-T_1,\dots,T_{k+1}-T_k,\dots,$ are a.s. finite and, moreover, they uniformly possess a finite exponential moment, i.e. there are finite constants $c>0$ and $C>0$ such that 
\begin{align}\label{Tk}
&{\mathbb E} \left( \exp (cT_1) \ | \ 
{\cal F}_0\right)\leq C \quad\text{and} \notag \\ 
& {\mathbb E} 
\left( \exp (c(T_{k+1}-T_{k})) \ | \ {\cal F}_{T_k}\right) \le C  \ \ \text{a.s. for all } k \geq 1.
\end{align}
\end{proposition}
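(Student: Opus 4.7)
\emph{Plan and decomposition.} The approach is to exploit that the submartingale $(S_n)$ drifts to infinity and writes naturally as a concatenation of strict ascending ladder spans, each of which furnishes an ``attempt'' at becoming a permanent maximum. I would introduce the strict ascending ladder times $\sigma_0:=0<\sigma_1<\sigma_2<\cdots$ defined by $\sigma_{j+1}:=\inf\{n>\sigma_j:S_n>S_{\sigma_j}\}$. The shifted process $(S_{\sigma_j+n}-S_{\sigma_j})_{n\geq 0}$ is again a strict submartingale with drift at least $\varrho$, and the inclusion $\{\sigma_{j+1}-\sigma_j>n\}\subset\{S_{\sigma_j+n}\leq S_{\sigma_j}\}$ combined with Proposition~\ref{prop1} applied with $x=0$ and any $\eps<\varrho$ yields $\mathbb{P}(\sigma_{j+1}-\sigma_j>n\mid\mathcal{F}_{\sigma_j})\leq e^{-c_2 n}$ a.s., so each increment has a uniform exponential moment. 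By construction $M_{\sigma_j}$ holds at every ladder time, hence $T_1=\sigma_N$ with $N:=\inf\{j\geq 0:D_{\sigma_j}\}$, and Corollary~\ref{cor2} extended by countability to the stopping time $\sigma_j$ gives $\mathbb{P}(D_{\sigma_j}\mid\mathcal{F}_{\sigma_j})\geq\varepsilon$.

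\emph{Exponential moment of $T_1$.} Combining these two ingredients requires a geometric-type estimate on $N$. I would first replace $D_{\sigma_j}$ by a finite-window ``trial'' event $\widetilde{D}_j\subset D_{\sigma_j}$, measurable in $\mathcal{F}_{\sigma_j+K}$ for a fixed $K$, built from a large upward excursion in $[\sigma_j,\sigma_j+j_0]$ together with the one-sided deviation bound of Corollary~\ref{cor1}, and satisfying $\mathbb{P}(\widetilde{D}_j\mid\mathcal{F}_{\sigma_j})\geq\varepsilon'>0$ uniformly. After a spacing modification of the ladder times that places the successive trials on disjoint time windows, a coupling with an independent auxiliary Bernoulli$(\varepsilon')$ sequence produces $\mathbb{P}(N\geq j)\leq C_0(1-\varepsilon')^{\lfloor j/K'\rfloor}$ for constants $C_0, K'$. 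A standard Chernoff estimate on a sum of ladder increments then gives, via the union bound
\[
\mathbb{P}(T_1>t\mid\mathcal{F}_0)\leq\mathbb{P}(\sigma_{\lceil t/L\rceil}>t\mid\mathcal{F}_0)+\mathbb{P}(N\geq\lceil t/L\rceil)
\]
with $L$ chosen large enough, an exponential tail $\mathbb{P}(T_1>t\mid\mathcal{F}_0)\leq C'e^{-\lambda t}$ uniform in $\mathcal{F}_0$, hence $\mathbb{E}(\exp(cT_1)\mid\mathcal{F}_0)\leq C$ for $c<\lambda$, which is the first line of~\eqref{Tk}.

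\emph{Extension to $T_{k+1}-T_k$ and main obstacle.} At time $T_k$ the conjunction $M_{T_k}\cap D_{T_k}$ implies $S_n>S_{T_k}$ for every $n>T_k$, so a direct rewriting shows that for $n=T_k+1+m$ with $m\geq 0$ the event $M_n$ reduces to the analogous strict-maximum event $\widetilde{M}_m$ for the shifted process $\widetilde{S}_m:=S_{T_k+1+m}$, while $D_n$ coincides with $\widetilde{D}_m$ by definition. Consequently $T_{k+1}-T_k=1+\widetilde{T}_1$, and since $(\widetilde{S}_m)_{m\geq 0}$ satisfies Conditions 1 and 2 with the same constants, the bound for $T_1$ applies to $\widetilde{T}_1$ and delivers the second line of~\eqref{Tk}. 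The hardest step is the geometric-type bound on $N$: the events $D_{\sigma_j}$ are genuinely future-dependent and conditioning on earlier failures $D_{\sigma_i}^c$ contaminates the future of $\sigma_j$, so a naive product iteration of the one-step inequality $\mathbb{P}(D_{\sigma_j}\mid\mathcal{F}_{\sigma_j})\geq\varepsilon$ is not available; the finite-window replacement placed on disjoint time windows and coupled with an auxiliary i.i.d.\ Bernoulli sequence is the device that bypasses this difficulty.
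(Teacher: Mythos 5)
Your skeleton matches the paper's: decompose $T_1$ into a geometric number of ``attempts'' at a permanent record, show the inter-attempt durations have uniform exponential moments (your ladder-time bound via Proposition~\ref{prop1} is fine, and the identification $T_1=\sigma_N$ is correct), and reduce $T_{k+1}-T_k$ to a shifted copy of $T_1$. You also correctly identify the one genuinely hard point, namely that the success events $D_{\sigma_j}$ depend on the whole future so that a naive product iteration of $\mathbb{P}(D_{\sigma_j}\mid\mathcal{F}_{\sigma_j})\geq\varepsilon$ is unavailable. The problem is that your proposed resolution of this point does not work as stated. You posit trial events $\widetilde{D}_j\subset D_{\sigma_j}$ that are measurable with respect to $\mathcal{F}_{\sigma_j+K}$ for a fixed finite $K$ and have uniformly positive conditional probability. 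No such events exist (outside degenerate cases): any event contained in $D_{\sigma_j}=\{\inf_{k\geq1}(S_{\sigma_j+k}-S_{\sigma_j})>0\}$ certifies that the walk \emph{never} returns to level $S_{\sigma_j}$, and since downward increments occur with positive probability under every $G\in\mathcal{G}$ in the models at hand, every finite-window event of positive probability can be followed by a path that violates this. The construction you invoke from Corollary~\ref{cor2} is precisely of the form (finite excursion) $\cap$ (infinite-horizon tail event $\{U_j\geq -x_0\}$), so it is not finite-window measurable, and the subsequent ``disjoint windows plus auxiliary Bernoulli coupling'' step has nothing to couple to.

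The paper resolves this by iterating over \emph{failures} rather than successes: if the attempt at $W_{k-1}$ fails, the failure is witnessed at the a.s.\ finite time $t_k=\inf\{n\geq1:S_{W_{k-1}+n}\leq S_{W_{k-1}}\}$, so the event $\{\theta\geq k\}=\bigcap_{j\leq k}\{t_j<\infty\}$ is determined by the path up to $W_k$. Writing $\mathbb{E}(e^{cW_k}\mathbf{I}(\theta\geq k))$ as $\mathbb{E}\bigl(\prod_{j=1}^{k}e^{c(W_j-W_{j-1})}\mathbf{I}(t_j<\infty)\bigr)$, the tower property applies factor by factor, and uniform integrability lets one choose $c$ so small that each conditional factor is at most $1-\varepsilon/2<1$; summing the geometric series gives the first line of \eqref{Tk}. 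If you want to keep your ``success-based'' phrasing you must restructure it into exactly this form (a geometric number of finitely-witnessed failures), at which point it coincides with the paper's argument. A secondary gap: in your last step, $\mathcal{F}_{T_k}$ carries the information that $D_{T_k}$ occurred, which is an infinite-horizon conditioning on the shifted process; the shifted process conditioned on this event does not trivially ``satisfy Conditions 1 and 2 with the same constants.'' The paper handles this by bounding $\mathbb{E}(\,\cdot\mid\widehat{\mathcal{F}}_0,D_m)\leq\varepsilon^{-1}\,\mathbb{E}(\,\cdot\mid\widehat{\mathcal{F}}_0)$ using $\mathbb{P}(D_m\mid\mathcal{F}_m)\geq\varepsilon$ from \eqref{Phat}; you should insert the same step.
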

\begin{proof}
In {\bf Step 1}, we prove the first
inequality in \eqref{Tk}, and then in {\bf Step 2} the second inequality.

{\bf Step 1}.
Without loss of generality, we may assume that $S_0=0$ a.s.
 Let
\begin{equation*}
t_1 = \inf \{n\ge 1: S_n \le 0\}\le \infty
\end{equation*}
and, on the event $\{t_1<\infty\}$, let
\begin{equation*}
v_1 = \max_{0\le i <t_1} S_i \quad
\text{and} \quad
u_1 = \inf \{i>0: S_{t_1+i} > v_1 \}.
\end{equation*}
For $k=1,2, \ldots$, we use an induction argument. Let 
\begin{equation*}
W_0=0 \ \ \text{and} \ \ W_k = \sum_{i=1}^k (t_i+u_i).
\end{equation*} 
Given $W_k < \infty$, we let  
 \begin{equation*}
t_{k+1} = \inf \{n\ge 1:  S_{W_k+n} \leq S_{W_k}\} \le \infty
\end{equation*}
and, on the event $\{W_k<\infty\}\cap\{t_{k+1} <\infty\}$, we let
\begin{align*}
&v_{k+1} = \max_{W_k\le i <W_k+t_{k+1}} S_i \quad
\text{and} \\
&u_{k+1} = \inf \{m>0: S_{W_k+t_{k+1}+m} > v_{k+1}\}.
\end{align*}
Let 
\begin{equation*}
\theta = \min \{k : {\textbf I} (D_{W_k})=1 \}.
\end{equation*}
Then we obtain $T_1 = W_\theta$. Observe that the distribution of $\theta$ is bounded from above by a geometric distribution with parameter $\widehat{P}$ given by \eqref{Phat}. This in particular gives us that $T_1$ is a.s. finite and that $T_1=0$ if $\theta=0$. Further, all $T_k, k\geq 2$ are a.s. finite by similar reasons.

{\bf Step 1.1} We prove first that the random variable
$W_1 {\bf I}(W_1<\infty)$ has a uniformly bounded conditional exponential moment, given ${\cal F}_0$.
For that, we show consequently that the tail distributions of $t_1{\bf I}(t_1<\infty)$ , $v_1{\bf I}(t_1<\infty)$,
$-S_{t_1}{\bf I}( t_1<\infty)$ and $u_1{\bf I}(t_1<\infty)$ decay exponentially fast (again uniformly conditioned on ${\cal F}_0$).
Then  the random variable 
\begin{align*}
W_1 {\bf I}(W_1<\infty) \equiv W_1 {\bf I}(t_1<\infty) =
(t_1+u_1) {\bf I}(t_1<\infty)
\end{align*} 
has also an exponentially decaying tail distribution, since, for any $c>0$,
\begin{align*}
 \exp(cW_1 {\bf I}(W_1<\infty))
 \leq \exp(2ct_1{\bf I}(t_1<\infty)) +
 \exp(2cu_1{\bf I}(t_1<\infty)).
\end{align*}

For any $i=0,1,\ldots$, recall the notation $\psi_i = \langle \xi_i, l \rangle$, we have
\begin{align*}
{\mathbb P}(t_1=i+1 \ | \ {\cal F}_0)
&=
{\mathbb P} (\min_{k\le i} S_k >0, S_{i+1}\leq 0 \ | \ {\cal F}_0)\\
&\leq
{\mathbb P} ( S_i >0, S_{i+1}\leq 0 \ | \ {\cal F}_0)\\
&\leq
{\mathbb P} ( S_i \leq i\varrho/2 \ | \ {\cal F}_0)  + 
{\mathbb P} ( S_i\geq i\varrho/2, \psi_{i+1}< - i\varrho/2 \ | \ {\cal F}_0)\\
&\leq
{\mathbb P} ( S_i \leq i\varrho/2 \ | \ {\cal F}_0) + 
{\mathbb P} (\psi_{i+1}< - i\varrho/2 \ | \ {\cal F}_0)\\
&\leq C_1 \exp (-C_2i) \quad \text{a.s.}
\end{align*}
where $C_1$ and $C_2$ are some absolute constants. Then
\begin{align*}
{\mathbb P}(i<t_1<\infty \ | \ {\cal F}_0) 
&= 
\sum_j
{\mathbb P}(t_1=i+j \ | \ {\cal F}_0)\\
&\leq
C_3 \exp (-C_2 i) \quad \text{a.s.}
\end{align*}
where $C_3$ and $C_2$ are some absolute constants.

Next, for any $x>0$,
\begin{align*}
{\mathbb P} (v_1>x, t_1<\infty \ | \ {\cal F}_0) 
&\leq
\inf_i [
{\mathbb P} (\max_{j\leq i} S_j > x \ | \ 
{\cal F}_0)
+ {\mathbb P} (i<t_1<\infty \ | \ {\cal F}_0)]\\
&\leq
{\mathbb P} (\max_{j\leq i_x} S_j > x \ | \ {\cal F}_0)
+ {\mathbb P} (i_x<t_1<\infty \ | \ {\cal F}_0)\\
&\leq 
\widehat{C}_1 \exp (-\widehat{C}_2 x), 
\end{align*}
where $i_x$ may be taken, say, as $i_x = [x/2\widehat{\rho}]$ and $\widehat{C}_1$ and $\widehat{C}_2$ are absolute constants, thanks to (\ref{P100}) in Proposition \ref{prop1}.

Then,  for $x> 0$, 
\begin{align*}
{\mathbb P} (-S_{t_1}{\bf I} (t_1<\infty)>x) 
& =
\sum_{i\ge 0} 
{\mathbb P} (-S_{t_1}>x, t_1=i+1 \ | \ {\cal F}_0)\\
&\leq 
\sum_{i\ge 0} \left(
{\mathbb P}(S_i \geq i\varrho/2 \ | \ {\cal F}_0) \sup_G {\mathbb P}_G (\psi <  -x - i\varrho/2)
\right. \\
& \hspace{30pt} \left.
+ \ {\mathbb P} (S_i \in (0,i\varrho/2) \ | \ {\cal F}_0)\sup_G {\mathbb P}_G (\psi < -x)\right)\\
&\leq \widetilde{C}_1 \exp (-\widetilde{C}_2x),
\end{align*}
where $\widetilde{C}_1$ and 
$\widetilde{C}_2$ are absolute constants.
Indeed, each of the probabilities ${\mathbb P}_G(\dots)$ is bounded from above using the Chernoff's inequality, the probability for $S_i \ge i\varrho/2$ is bounded from above by 1 and the probability for $S_i$ to be below $i\varrho/2$ is bounded by an exponentially decaying function, see Proposition \ref{prop1} again.

Then, for an integer $x>0$ and for any $K>0$, 
\begin{align*}
{\mathbb P} (u_1>x, t_1<\infty \ | \ {\cal F}_0) 
&\leq
{\mathbb P}(v_1>Kx/2,t_1<\infty \ | \ {\cal F}_0) \\
&+ {\mathbb P}(-S_{t_1}>Kx/2,t_1<\infty \ | \ {\cal F}_0) \\
&+
\sum_i {\mathbb P} (t_1=i, \max_{j\leq x}
(S_{i+j}-S_i) \le Kx) \ | \ {\cal F}_0),
\end{align*}
where the first two terms are bounded by exponentially decaying functions and the
latter sum is bounded from above by
$\esssup {\mathbb P} (\max_{j\leq x} (S_j-S_0) \leq Kx \ | \ {\cal F}_0)$
that admits an exponentially decaying upper bound if $K\le \varrho/2$.

Thus, there exist absolute constants $c$ and $C$ such that
\begin{align}\label{W1}
    {\mathbb E} \exp (cW_1{\bf I}(W_1<\infty)
    \ | \ {\cal F}_0) \leq C \ \ \text{a.s.}
\end{align}

{\bf Step 1.2.}
It follows directly that, for any $k\geq 2$ and for the same constants $c$ and $C$, 
\begin{align}\label{Wk}
{\mathbb E} \exp (c(W_{k}-W_{k-1})
{\bf I}(W_{k}-W_{k-1}<\infty) \  | \  {\cal F}_{k-1}, W_{k-1}<\infty) \leq C \ \ \text{a.s.}
\end{align}
Indeed, it is enough to shift the time origin from $0$ to $W_{k-1}$.

Properties \eqref{W1} and  \eqref{Wk} imply that, for any 
smaller value of $c$, all random variables 
$U_k := \exp(c(W_k-W_{k-1})) {\bf I} (W_k-W_{k-1}<\infty)$ are conditionally uniformly integrable, i.e., for any $\delta >0$, there exists a sufficiently large $K_{\delta}$ such that
\begin{align*}
    {\mathbb E} (U_k {\bf I}(U_k>K_{\delta}) \ | \ 
    {\cal F}_{W_{k-1}}, W_{k-1}<\infty) \leq \delta \ \ \text{a.s.}
    \end{align*}
This, in turn, implies that we can take a very small value of $c>0$ such that, for any $k$, 
\begin{align}\label{valuec}
    {\mathbb E} (U_k \ | \ {\cal F}_{W_{k-1}}, W_{k-1}<\infty)\leq 1-\varepsilon/2 =:\Delta <1\ \ \text{a.s.}
\end{align}

{\bf Step 1.3.} We are ready now to prove the first inequality in \eqref{Tk}.
Take a value $c$ satisfying \eqref{valuec}. We have 
\begin{align*}
{\mathbb E}(e^{cT_1} \ | \ {\cal F}_0) &= 
\sum_{k=1}^{\infty}{\mathbb E}(e^{cW_{k}} {\bf I}(\theta =k)) \ | \ {\cal F}_0) \\
&\leq
\sum_{k=1}^{\infty}{\mathbb E}(e^{cW_{k}} {\bf I}(\theta \geq k)) \ | \ {\cal F}_0)\\
&=
\sum_{k=1}^{\infty}
{\mathbb E}
\left(\prod_{j=1}^k e^{c(W_{j}-W_{j-1})}{\bf I}(t_j<\infty) \ | \ {\cal F}_0\right)\\
&\leq \sum_{k=1}^{\infty} \Delta^k
= \frac{\Delta}{1-\Delta} =:C < \infty.
\end{align*}


{\bf Step 2.} We prove now the second inequalities in 
\eqref{Tk}.

Take any $k\ge 1$ and consider $T_{k+1}-T_k$ conditioned on the event $T_k=m$, for some $m\geq k$.  
Given that,  
\begin{align*}
 T_{k+1}-T_k &= T_{k+1}-m \\
 &= 1 + \min \{n\geq 0: {\bf I}(D_{m+1+n})=1 \ \text{and} \ S_{m+1+n}-S_{m+1} > \max_{0\leq i \leq n-1}S_{m+1+i}-S_{m+1},    
\end{align*}
where, by convention, the maximum over empty set is $-\infty$. 

We shift the time origin from $0$ to $m+1$ and the space origin from $0$ to $S_{m+1}$ and let $\widehat{\cal F}_0={\cal F}_{m+1}$ and $\widehat{S}_n = S_{m+1+n}-S_{m+1}$. Further, introduce events $\widehat{D}_n = D_{m+1+n}$ and $\widehat{M}_n = \{\widehat{S}_n > \widehat{S}_j, \ \text{for} \ 0\leq j < n\}$, and let $\widehat{T}_1= \min \{n\geq 0: {\mathbf I}(\widehat{M}_n\cap\widehat{D}_n)=1$. Then the result obtained in {\bf Step 1} for $T_1$ may be applied to $\widehat{T}_1$, too.
Therefore, given $T_k=m$ and for $c>0$,
\begin{align*}
    {\mathbb E} (\exp (c(T_{k+1}-T_k) \ | \ {\cal F}_{T_k})
&=
{\mathbb E}(\exp(c\psi_{m+1})\exp (c\widehat{T}_1) \ | \ \widehat{F}_0, D_m) \\
&\leq
\frac{C_1}{\varepsilon} {\mathbb E} (\exp (c\widehat{T}_1) {\bf I}(D_m)\ | \ \widehat{F}_0) \\
&\leq
\frac{C_1}{\varepsilon} {\mathbb E} (\exp (c\widehat{T}_1) \ | \ \widehat{F}_0)\\
&\leq \frac{C_1C_2}{\varepsilon}\ \ \text{a.s.}
\end{align*}
where the absolute constant $C_1$ comes from Condition 2, $\varepsilon$ comes from \eqref{Phat}, and $C_2$ follows from the {\bf Step 1} of this proof.

Q.E.D. 
\end{proof}

\subsection{Regenerative cycles}\label{appendix:renewal}




We introduce the additional Condition~3 (see below) that states  that  the process exhibits a regenerative structure, in which the increments between successive record times are independent and identically distributed. Conditions~1-3 allow us to establish, see Proposition~\ref{prop3}, a Strong Law of Large Number result, interpreted here as the existence of a limiting drift.
In Appendix~\ref{appendix:conditions1-3forLLP}, we demonstrate that LLPs in Model~\ref{Model2} satisfy Conditions 1--3, making our main results applicable to that framework. We note that Condition~3
significantly simplifies the proofs.

In more detail: in addition to Conditions 1 and 2, we assume \\
\vspace{0.25cm}
{\bf Condition 3.}  
\begin{itemize}\item For any $k\ge 1$,
the distribution of the sequence $\Sigma_k:=\{\xi_{T_k+i}\}_{i\ge 1}$ does not depend on the $\sigma$-algebra ${\cal F}_{T_k}$ and, in particular, on the value of $T_k$. 
\item 
Moreover, all these sequences $\Sigma_k$, $k=1,2,\ldots$ have the same distributions.
\end{itemize}

We can now state an instrumental result for the rest of the development.

\begin{proposition}\label{prop3}
Under the Conditions 1-3, the families of sequences
\begin{equation}\label{iidcycles}
(T_{k+1}-T_k, \{\xi_{T_k+i}, 1\leq i \leq T_{k+1}-T_k\}) \ \ k=1,2\ldots
\end{equation} are a.s. finite and i.i.d. in $k$, and they do not depend on the sequence $\{\xi_i, i\le T_1\}.$\\
Further, their lengths $T_{k+1}-T_k$ and  scopes
$Z_k := \sum_{i=1}^{T_{k+1}-T_k} || \xi_{T_k+i}||$
are i.i.d. and possess finite exponential moments. \\
Furthermore, the random vectors $X_{T{k+1}}-X_{T_k}$, $k\ge 1$ are i.i.d., possess finite exponential moments and do not depend on the random vector $X_{T_1}$. Then ${\mathbb E}(X_{T_2}-X_{T_1}) \neq {\bf 0}$ and the SLLN holds
\begin{equation}\label{newL}
\frac{X_{T_n}-X_0}{n} \to {\mathbb E}(X_{T_2}-X_{T_1}), \quad \text{a.s. and in} \ {\cal L}_1
\end{equation}
and, therefore,
\begin{equation}\label{newL2}
\frac{X_{n}-X_0}{n} \to \frac{{\mathbb E}(X_{T_2}-X_{T_1})}{{\mathbb E}(T_2-T_1)} =: L, \quad \text{a.s. and in} \ {\cal L}_1.
\end{equation}
\end{proposition}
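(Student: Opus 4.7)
The plan is to leverage Condition~3 for the i.i.d.\ structure of cycles, Condition~2 combined with Proposition~\ref{prop: finite Tk} for the finite exponential moments of lengths, scopes and increments, and a standard renewal-theoretic argument to pass from the SLLN for $X_{T_n}/n$ to the one for $X_n/n$.

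First, I would observe that the cycle $(T_{k+1}-T_k,\{\xi_{T_k+i},\,1\le i\le T_{k+1}-T_k\})$ is a measurable functional of the post-$T_k$ sequence $\Sigma_k=\{\xi_{T_k+i}\}_{i\ge 1}$: with the notation of the proof of Proposition~\ref{prop: finite Tk}, $T_{k+1}-T_k$ is the first index $n\ge 1$ for which the shifted projection $\widetilde{S}_n:=\sum_{i=1}^{n}\langle \xi_{T_k+i},l\rangle$ attains a strict maximum over $\{0,\dots,n-1\}$ and never dips below this level again. Condition~3 states that each $\Sigma_k$ is independent of $\mathcal{F}_{T_k}$ and shares a common law, so the cycles are i.i.d.\ in $k$ and independent of $\{\xi_i,\,i\le T_1\}$; almost-sure finiteness is given by Proposition~\ref{prop: finite Tk}.

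Next, I would establish the exponential integrability. For $\tau_k:=T_{k+1}-T_k$ this is exactly~\eqref{Tk}. For the scope $Z_k$, I would write
\begin{equation*}
\mathbb{E}(e^{cZ_k}) = \sum_{n\ge 1}\mathbb{E}\bigl(\mathbf{1}(\tau_k=n)\,e^{c\sum_{i=1}^{n}\|\xi_{T_k+i}\|}\bigr) \le \sum_{n\ge 1}\mathbb{P}(\tau_k=n)^{1/2}\,\mathbb{E}\bigl(e^{2c\sum_{i=1}^{n}\|\xi_{T_k+i}\|}\bigr)^{1/2}
\end{equation*}
by Cauchy--Schwarz. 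Iterated conditioning together with Condition~2 bounds the second factor by $K(c)^{n/2}$, where $K(c)\to 1$ as $c\to 0^+$, while Markov applied to $\mathbb{E}(e^{c_1\tau_k})\le C_1$ (from Proposition~\ref{prop: finite Tk}) bounds the first factor by a geometric term, so the series converges for $c$ sufficiently small. Since $\|X_{T_{k+1}}-X_{T_k}\|\le Z_k$, the cycle increments $X_{T_{k+1}}-X_{T_k}$ also possess finite exponential moments, and are i.i.d.\ in $k$ by the functional argument above.

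To see that $\mathbb{E}(X_{T_2}-X_{T_1})\ne\mathbf{0}$, I would project onto $l$. The shifted process $(S_{T_1+n}-S_{T_1}-n\varrho)_{n\ge 0}$ is a submartingale for $(\mathcal{F}_{T_1+n})_n$ by Condition~1; since $\tau_1=T_2-T_1$ has an exponential moment, Doob's optional sampling yields
\begin{equation*}
\mathbb{E}\bigl(\langle X_{T_2}-X_{T_1},\,l\rangle\bigr) \;\ge\; \varrho\,\mathbb{E}(\tau_1) \;>\;0,
\end{equation*}
so $\mathbb{E}(X_{T_2}-X_{T_1})$ is nonzero. Kolmogorov's SLLN applied to the i.i.d.\ integrable vectors $X_{T_{k+1}}-X_{T_k}$ then gives~\eqref{newL} a.s., and $\mathcal{L}_1$ convergence follows from the uniform integrability granted by the exponential moments. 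Finally, for~\eqref{newL2}, set $N(t):=\sup\{k:T_k\le t\}$: the renewal SLLN yields $N(t)/t\to 1/\mathbb{E}(\tau_1)$ a.s., while
\begin{equation*}
\frac{\|X_t-X_{T_{N(t)}}\|}{t}\;\le\;\frac{Z_{N(t)+1}}{t}\;\longrightarrow\;0\quad\text{a.s.}
\end{equation*}
using $Z_k/k\to 0$ a.s.\ for integrable i.i.d.\ $Z_k$; combining with $X_{T_{N(t)}}/N(t)\to\mathbb{E}(X_{T_2}-X_{T_1})$ produces~\eqref{newL2} almost surely, and $\mathcal{L}_1$ convergence of $X_n/n$ is provided by Remark~\ref{UI}. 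The most delicate point is the Cauchy--Schwarz/Markov bound on $\mathbb{E}(e^{cZ_k})$: it is precisely here that Condition~2 (rather than the merely first-moment Condition~1) is indispensable, since a naive approach would overlook the coupling between $\tau_k$ and the accumulated $\|\xi_{T_k+i}\|$.
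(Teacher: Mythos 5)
Your overall strategy is the same as the paper's: Condition~3 plus Proposition~\ref{prop: finite Tk} give i.i.d.\ a.s.\ finite cycles, the SLLN for the cycle increments gives \eqref{newL}, and \eqref{newL2} follows by controlling the within-cycle fluctuations (the paper uses $V_k:=\sup_{T_k\le n\le T_{k+1}}|X_n-X_{T_k}|$ where you use $Z_{N(t)+1}$; these are interchangeable since $V_k\le Z_k$). Your Cauchy--Schwarz argument for $\mathbb{E}(e^{cZ_k})<\infty$ supplies a detail the paper leaves implicit, though the ``iterated conditioning'' there needs one extra word: $T_k$ is \emph{not} a stopping time (the events $D_n$ depend on the whole future), so you cannot condition on $\mathcal{F}_{T_k+i-1}$ directly. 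You should first use Condition~3 to replace the law of $\{\xi_{T_k+i}\}_{i}$ by the law of a deterministically shifted sequence conditioned on an event of probability at least $\widehat{P}\ge\varepsilon$, paying a factor $1/\varepsilon$ as in Step~2 of the proof of Proposition~\ref{prop: finite Tk}, and only then condition iteratively.

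The one step that genuinely fails as written is the optional-sampling argument for $\mathbb{E}(X_{T_2}-X_{T_1})\neq\mathbf{0}$. Doob's optional sampling theorem requires $\tau_1=T_2-T_1$ to be a stopping time with respect to $(\mathcal{F}_{T_1+n})_{n\ge 0}$, and it is not: whether $n$ equals $T_2-T_1$ depends on the occurrence of $D_{T_1+n}$, i.e.\ on the entire future of the trajectory. This is precisely the feature that makes the regenerative structure here nonstandard, so the theorem cannot be invoked. Fortunately the conclusion is immediate without it: on the event $D_{T_1}$, which holds by definition of $T_1$, one has $\inf_{k\ge1}(S_{T_1+k}-S_{T_1})>0$, hence $\langle X_{T_2}-X_{T_1},l\rangle=S_{T_2}-S_{T_1}>0$ almost surely and therefore $\mathbb{E}\langle X_{T_2}-X_{T_1},l\rangle>0$ (this is also how Remark~\ref{rem11} obtains $\langle L,l\rangle\ge\varrho>0$). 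With that substitution your argument is complete and follows essentially the paper's route, merely with more of the renewal-theoretic bookkeeping written out.
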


\begin{remark}\label{rem11}  
Note that the direction of vector $L$ may differ from the direction of $l$. However,
the scalar product $\langle L,l \rangle$ must be strictly positive. Indeed, in view of the previous proposition $ \langle L , l \rangle = \lim_n S_n/n \geq \varrho $ where the last inequality follows because of \eqref{G0}.
\end{remark}


\begin{proof}[Proof of Proposition \ref{prop3}] \label{remA3}
This is nothing more than a direct corollary of the previous results and of the basic renewal theory in the presence of additional Condition 3. Indeed, (\ref{newL}) follows since
\begin{equation*}
\frac{X_{T_n}-X_0}{n} =
\frac{\sum_{i=1}^{n-1}(X_{T_{i+1}} -X_{T_i})}{n} + \frac{X_{T_1}-X_0}{n}
\end{equation*}
where the increments in the numerator of the first fraction are i.i.d. with a finite mean and the numerator in the second fraction has a finite mean and does not depend on $n$.
Then (\ref{newL2}) follows from the fact that
$V_k:= \sup_{T_k\leq n\leq T_{k+1}} |X_n-X_{T_k}|$, $k=1,2,\ldots$ are i.i.d. with a finite mean and, therefore,
\begin{equation*}
    \frac{\max_{1\leq k\leq n-1} V_k}{n} \to 0, \quad \text{a.s. and in} \ \  {\cal L}_1 
\end{equation*}
and, further, $V_0:= \sup_{1\leq n\leq T_{1}} |X_n-X_{0}|$ has a finite first moment, too. 
\qed
\end{proof}

\subsection{Regenerative cycles for cones}\label{appendix:renewalcones}


Let ${\mathbb R}^d_+$ denote the positive orthant in ${\mathbb R}^d$, consisting of all vectors with nonnegative coordinates. For the vector $l$ considered in \eqref{G0} let  
\begin{equation}\label{cone1}
  {\cal C} = \{z\in {\mathbb R}^d: \ \langle z,l \rangle > 0\} \ \text{and} \ \widehat{\cal C} =
  {\cal C}\cap {\mathbb R}^d_+. 
\end{equation}


Since $l$ does not need to belong to $\R^d_+$, the set $\widehat{\cal C}$ may be a strict subset of ${\mathbb R}^d_+$. In addition, it can be shown that under the assumptions of Model \ref{Model}, the set $\widehat{\cal C}$ is nonempty, which is of crucial relevance for our main result, see Theorem~\ref{thm:MainResult}.


Let $X_{n,j}$ be the $j$'th coordinate of vector $X_n$. For $0\leq i <j$, introduce the following events:
\begin{align}\label{Eij}
E_{i,j}:= &\left\{ \bigcap_{m=1}^{j-i} 
(X_{i+m}-X_i)\in \widehat{\cal C} \right\} 
\bigcap  \left\{ \min_r (X_{j,r}-X_{i,r})>0 \right\} \bigcap \left\{S_j  >  \max_{i\leq m<j} S_m \right\} 
\end{align}
In particular, if $i=T_k$ and $j=T_{k+1}$, for some $k\ge 1$, then, clearly, 
\begin{align}\label{ETk}
E_{T_k,T_{k+1}} = &
\left\{ \bigcap_{m=1}^{T_{k+1}-T_k}
(X_{T_k+m}-X_{T_k})\in \widehat{\cal C} \right \} 
\bigcap   \left\{ \min_r (X_{T_{k+1},r}-X_{T_k,r})>0 \right\}.
\end{align}

\begin{proposition}\label{prop4}
Assume that 
\begin{itemize}
    \item[(i)] 
Conditions 1-3 hold, 

\item[(ii)] 
The vector $L$ defined by \eqref{newL2} has all coordinates strictly positive: $L=(L_1,L_2,\ldots,L_d)$, with $L_i>0$, for all $i$. 

\item[(iii)]
 ${\mathbb P} (E_{T_1,T_2})>0.$
\end{itemize}
Then
\begin{align}\label{regen1}
    {\mathbb P}
    \left( 
    \bigcap_{m=1}^{+\infty}
\{(X_{T_1+m}-X_{T_1})\in \widehat{\cal C}\}
    \right) >0. 
\end{align}
\end{proposition}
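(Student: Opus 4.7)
The strategy is to exploit the i.i.d.\ cycle structure granted by Condition~3 and Proposition~\ref{prop3}, splitting the trajectory after $T_1$ into (a) an initial phase of $N$ forced ``good'' cycles (for which $E_{T_k,T_{k+1}}$ holds) that pushes the process deep into the interior of $\mathbb{R}^d_+$ relative to $X_{T_1}$, and (b) a tail phase of unconditional i.i.d.\ cycles whose positive drift $L$ prevents escape from $\widehat{\mathcal{C}}$. A crucial simplification comes from the ladder structure: since each $T_k$ satisfies the $D$-property in \eqref{Dm}, one has $\langle X_{T_k+m}-X_{T_1},l\rangle = S_{T_k+m}-S_{T_1}>0$ automatically for all $k\geq 1$ and $m\geq 1$, so membership in $\widehat{\mathcal{C}}$ reduces to coordinate-wise non-negativity of $X_{T_k+m}-X_{T_1}$.

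Let $B_k=E_{T_k,T_{k+1}}$. By Condition~3 the pairs $(\Delta_k,B_k)$ with $\Delta_k:=X_{T_{k+1}}-X_{T_k}$ are i.i.d., so $p:=\mathbb{P}(B_1)>0$ and $\mathbb{P}(G_N)=p^N$ with $G_N:=B_1\cap\dots\cap B_N$. On $G_N$, each excursion $\{X_{T_k+m}-X_{T_k}:1\leq m\leq T_{k+1}-T_k\}$ lies in $\widehat{\mathcal{C}}$ and each $\Delta_k$ has strictly positive coordinates; closure of $\widehat{\mathcal{C}}$ under addition gives $X_{T_k+m}-X_{T_1}\in\widehat{\mathcal{C}}$ for every $k\leq N$ and $m\geq 1$, and the vector $A:=X_{T_{N+1}}-X_{T_1}$ has strictly positive coordinates. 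Conditional on $G_N$, the $\Delta_j$, $j\leq N$, are i.i.d.\ with law $\Delta_1\mid B_1$, whose mean has strictly positive coordinates, so the conditional SLLN yields $A/N\to\mathbb{E}(\Delta_1\mid B_1)$ a.s.; in particular, for any fixed $L>0$, $\mathbb{P}(\min_i A_i\geq L\mid G_N)\to 1$ as $N\to\infty$. By Condition~3, conditional on $\mathcal{F}_{T_{N+1}}$ the future cycles $(\Delta_{N+k},V_{N+k})_{k\geq 1}$, with $V_k:=\sup_{T_k\leq n\leq T_{k+1}}\|X_n-X_{T_k}\|$, are i.i.d.\ with the unconditional distribution and independent of $A$. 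Using
\[
(X_{T_{N+k}+m}-X_{T_1})_i\;\geq\;A_i+\sum_{j=1}^{k-1}\Delta_{N+j,i}-V_{N+k},
\]
the tail event is implied by $A_i\geq U_i:=\sup_{k\geq 1}\bigl(V_{N+k}-\sum_{j=1}^{k-1}\Delta_{N+j,i}\bigr)$ for every coordinate $i$.

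The heart of the argument is the a.s.\ finiteness of each $U_i$: by the exponential moments of $V_k$ from Proposition~\ref{prop3} and Borel--Cantelli we have $V_k/k\to 0$ a.s., while assumption (ii) together with the SLLN from Proposition~\ref{prop3} gives $\sum_{j=1}^{k-1}\Delta_{N+j,i}/(k-1)\to L_i\cdot\mathbb{E}(T_2-T_1)>0$ a.s.; hence $V_{N+k}-\sum_{j=1}^{k-1}\Delta_{N+j,i}\to -\infty$ a.s., so $U_i<\infty$ a.s. Setting $U:=\max_i U_i<\infty$ a.s., and using that $U$ is independent of $(G_N,A)$ by Condition~3, first choose $L$ with $\mathbb{P}(U\leq L)\geq 1/2$, then $N$ with $\mathbb{P}(\min_i A_i\geq L\mid G_N)\geq 1/2$, to obtain
\[
\mathbb{P}\Bigl(\bigcap_{m\geq 1}\{X_{T_1+m}-X_{T_1}\in\widehat{\mathcal{C}}\}\Bigr)\;\geq\;\tfrac{1}{4}p^N\;>\;0.
\]
I expect the main obstacle to be the careful book-keeping of two independence structures --- the conditional i.i.d.\ structure of $\Delta_1,\dots,\Delta_N$ given $G_N$ needed to run the SLLN on the initial phase, and the unconditional i.i.d.\ structure of the tail cycles needed for the Borel--Cantelli bound on $V_k$ and the SLLN for the coordinate walks --- with the two phases glued by the independence of past and future across $T_{N+1}$ granted again by Condition~3.
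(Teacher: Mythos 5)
Your proof is correct and follows essentially the same route as the paper's: force finitely many occurrences of the good event $E_{T_k,T_{k+1}}$ to accumulate a large coordinate-wise gain after $T_1$, then dominate all later excursions by the a.s.\ finite all-time future infimum of each coordinate walk, whose finiteness rests on $L_i>0$ exactly as for the paper's random variables $W_{k,j}$. The only (immaterial) difference is how the initial gain is secured: the paper makes it deterministic by intersecting $E_{T_k,T_{k+1}}$ with $\{\min_r(X_{T_{k+1},r}-X_{T_k,r})\ge c\}$ and choosing $k$ with $(k-1)c>y$, whereas you obtain it with high conditional probability via a law of large numbers given $G_N$.
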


\begin{proof}[Proof of Proposition \ref{prop4}] 
For $c>0$, let 
\[
E^{(k)} := E_{T_k, T_{k+1}}\cap \{ \min_r (X_{T_{k+1},r}-X_{T_k,r})\ge c \}
\]
and choose $c$ such that $z_1:={\mathbb P}(E^{(k)})>0$.

 For $k\geq 1$ and $j=1,2,\ldots,d$, let 
 \begin{align*}
 &U_{k,j} = X_{T_{k+1},j} - X_{T_k,j} \ \text{and}
  \\
&V_{k,j} = \min_{1\leq i \leq T_{k+1}-T_k}
(X_{T_k+i,j}-X_{T_k,j}).    
 \end{align*}
It follows from Proposition \ref{prop3} that, for any $j$, 
the pairs $(U_{k,j},V_{k,j})$ are i.i.d. and possess finite exponential moments.
Since $L_j = {\mathbb E}( U_{k,j}) >0$, we have that  
\[
W_{k,j} := \min \left\{  
V_{k,j}, \inf_{m\ge 0} \left( 
\sum_{i=0}^{m} U_{k+i,j} + V_{m+1,j}
\right) \right\}
\]
is a.s. finite too (and possesses a finite exponential moment). Indeed,
$W_{k,j} \leq V_{k,j}$ and 
\begin{align*}
W_{k,j} \geq & 
\min \left\{  
V_{k,j}, \inf_{m\ge 0} \left( 
(m+1)b + V_{m+1,j}
\right) \right\} + \\
&\min \left\{0, \inf_{m\ge 0}
\sum_{i=0}^m (U_{k+i,j}-(m+1)b)\right\}   
\end{align*}
where $b\in (0,L_j)$ is any constant.
The infimum in the first term in the right-hand side is finite because i.i.d. random variables $V_{m,j}$ has a finite mean. 
The infimum in the second term in the right-hand side in the infimum of partial sums of i.i.d. random variables with a positive mean which is finite a.s. 

Thus, there exists $y>0$ such that $z_2:= {\mathbb P} (\min_jW_{k,j}>-y)>0$.

Choose $k$ such that $(k-1)c >y$.
Then
\begin{align*}
  & {\mathbb P}
    \left( 
    \bigcap_{m=1}^{+\infty}
\{(X_{T_1+m}-X_{T_1})\in \widehat{\cal C}\}
    \right) \geq
    \left( \prod_{i=1}^{k-1} {\mathbb P}(E^{(i)}) \right)  {\mathbb P}(\min_j W_{k,j}>-y) =
    z_1^{k-1}z_2 >0.
\end{align*} \qed
\end{proof}

\begin{cor}\label{corA3}
Assume that the conditions of Proposition \ref{prop4} hold.
Assume further that the initial conditions are such that the event $E_{0,n}$ has a strictly positive probability, for some $n\ge 1$. 
 Then
 \begin{align}\label{eqcorA3}
   {\mathbb P}
    \left( 
    \bigcap_{n=1}^{+\infty}
\{(X_{n}-X_{0})\in \widehat{\cal C}\}
    \right) >0.   
 \end{align}
\end{cor}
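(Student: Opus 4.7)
The plan is to combine the hypothesis $\mathbb{P}(E_{0,n}) > 0$ with Proposition~\ref{prop4} by exploiting the regenerative structure of Condition~3. The crucial observation is that $E_{0,n}$ almost makes $n$ a record time: its last factor is exactly the event $M_n = \{S_n > \max_{0 \leq m < n} S_m\}$, so if we additionally impose $D_n$ then $n$ coincides with one of the record times $T_j$.

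Concretely, I would first upgrade $E_{0,n}$ to $E_{0,n} \cap D_n$. By the uniform lower bound \eqref{P3}, $\mathbb{P}(D_n \mid {\cal F}_n) \geq \varepsilon > 0$ a.s., so
\[
\mathbb{P}(E_{0,n} \cap D_n) \geq \varepsilon\, \mathbb{P}(E_{0,n}) > 0.
\]
On this event both $M_n$ and $D_n$ hold, and since $(T_k)_{k \geq 1}$ enumerates all times $m$ with $\mathbf{I}(M_m \cap D_m)=1$, we have $n = T_j$ for some (random) $j \geq 1$.

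Then I would transfer the conclusion of Proposition~\ref{prop4} to the process after time $n$ via Condition~3. Set $F := \bigcap_{m \geq 1} \{X_{T_1 + m} - X_{T_1} \in \widehat{\mathcal{C}}\}$, so that $p^{*} := \mathbb{P}(F) > 0$ by Proposition~\ref{prop4}. Since $F$ is determined by the increment sequence $\Sigma_1 = \{\xi_{T_1 + i}\}_{i \geq 1}$, and by Condition~3 each $\Sigma_j$ has the same distribution as $\Sigma_1$ and is independent of ${\cal F}_{T_j}$, I would conclude
\[
\mathbb{P}\!\left(\bigcap_{m \geq 1}\{X_{T_j + m} - X_{T_j} \in \widehat{\mathcal{C}}\} \,\Big|\, {\cal F}_{T_j}\right) = p^{*} \quad \text{a.s.}
\]
Decomposing $E_{0,n} \cap D_n = \bigsqcup_{j \geq 1} \bigl(E_{0,n} \cap D_n \cap \{T_j = n\}\bigr)$ and noting that on $\{T_j = n\}$ the event $\bigcap_{m}\{X_{n+m} - X_n \in \widehat{\mathcal{C}}\}$ coincides with the post-$T_j$ event above, summation over $j$ yields
\[
\mathbb{P}\!\left(E_{0,n} \cap D_n \cap \bigcap_{m \geq 1} \{X_{n+m} - X_n \in \widehat{\mathcal{C}}\}\right) = p^{*} \cdot \mathbb{P}(E_{0,n} \cap D_n) > 0.
\]

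To finish, I would observe that on this intersection one has $X_m - X_0 \in \widehat{\mathcal{C}}$ for every $m \geq 1$: for $m \leq n$ this is part of $E_{0,n}$, while for $m > n$ one writes $X_m - X_0 = (X_n - X_0) + (X_m - X_n)$ and uses that $\widehat{\mathcal{C}}$ is closed under addition, since the sum of two vectors with nonnegative coordinates and strictly positive $l$-projection has the same property. This gives \eqref{eqcorA3}. The main subtlety is the random index $j$ with $T_j = n$: one has to decompose along $\{T_j = n\}$ before invoking the independence from ${\cal F}_{T_j}$ provided by Condition~3. Recognizing that, once $D_n$ is imposed, the deterministic time $n$ is ``secretly'' one of the stopping times $T_j$ is exactly what lets the corollary reduce to Proposition~\ref{prop4}.
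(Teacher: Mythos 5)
Your proposal is correct and follows essentially the same route as the paper: intersect $E_{0,n}$ with the event of never leaving $\widehat{\cal C}$ after time $n$, observe that $E_{0,n}\cap D_n$ forces $n$ to be one of the record times $T_j$ so that Condition~3 and Proposition~\ref{prop4} yield a positive (conditional) probability of staying in the cone forever, and conclude using that $\widehat{\cal C}$ is closed under addition. The paper's own proof is the one-line factorization ${\mathbb P}(E_{0,n}\cap\widehat D_n)={\mathbb P}(E_{0,n})\,{\mathbb P}(D_n\cap M_n)\,{\mathbb P}(\widehat D_n\mid D_n\cap M_n)$, which glosses over exactly the subtlety you flag at the end (the events $D_n$ and $\{T_j=n\}$ anticipate the future, hence are not ${\cal F}_{T_j}$-measurable, so the product form requires precisely the kind of decomposition you sketch); your write-up is, if anything, the more explicit of the two.
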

\begin{proof}[Proof of Corollary \ref{corA3}]
Choose the smallest $n$ such that ${\mathbb P}(E_{0,n})>0$. Denote by $\widehat{D}_n$ the event
\begin{align}\label{hatMn}
\widehat{D}_n = \bigcap_{i =  1}^{+\infty}
\{X_{n+i}-X_n \in \widehat{\cal C}\}.
\end{align}
Then 
the probability in (\ref{eqcorA3}) is equal to
\begin{align*}
    &{\mathbb P}(E_{0,n}\cap \widehat{D_n}) =
    {\mathbb P} (E_{0,n}) {\mathbb P}(D_n \cap M_n) {\mathbb P}(\widehat{D}_n \ | \ D_n\cap M_n)>0   
\end{align*}
because  all three multipliers are strictly positive. \qed
\end{proof}

If the event $D_n\cap M_n$ occurs, then
$n\in \{T_k\}_{k\ge 1}$.
Therefore, under the Conditions 1-3, the probability 
\begin{equation} 
    \widehat{p}:= {\mathbb P}(\widehat{D}_n \ | \ D_n\cap M_n) >0
\end{equation}
does not depend on $n$. 

Let $1\leq \nu_1 < \nu_2 < \ldots$ be the consecutive indices $k$ such that events $\widehat{D}_{T_k}$ occur. 

\begin{proposition}\label{prop5}
Assume that the conditions of Proposition \ref{prop4} hold. Then,
\begin{itemize}
    \item The 
random elements
\begin{equation}\label{elements}
\left( \nu_{k+1}-\nu_k, T_{\nu_{k+1}}-T_{\nu_k},
\{X_{T_{\nu_k+i}}-X_{T_{\nu_k}}, 1\leq i \leq 
T_{\nu_{k+1}}-T_{\nu_k} \} \right)
\end{equation}
 are i.i.d. and do not depend on $\left(\nu_1, T_{\nu_1}, \{X_i, 0\leq i \leq T_{\nu_1}\} \right)$;
\item
further, the random variables $\nu_1$ and $\nu_{k+1}-\nu_k$, $k\ge 1$ 
have finite exponential moments, and therefore $T_{\nu_1}$ and $T_{\nu_{k+1}}-T_{\nu_k}$, $k\ge 1$ have finite exponential moments, too.
\end{itemize}
\end{proposition}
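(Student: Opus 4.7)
\emph{Overview.} The claim has a distributional part (i.i.d.\ with independence from the pre-$T_{\nu_1}$ history) and a tail part (finite exponential moments). My plan is to bootstrap both from Condition~3 and Proposition~\ref{prop3} via a geometric-trial construction at record times. For the distributional part, note that $W_k:=\mathbf{I}(\widehat{D}_{T_k})$ depends only on the tail $\{\xi_{T_k+i}\}_{i\ge 1}$, so Condition~3 gives $\mathbb{P}(W_k=1\mid\mathcal{F}_{T_k})=\widehat{p}$ with a common conditional law. Applying this at $T_{\nu_k}$ for each realized value of $\nu_k$, the block of data read off between the $k$-th and $(k{+}1)$-st success has the law of the corresponding block starting at $T_{\nu_1}$ and is independent of $\mathcal{F}_{T_{\nu_k}}$; this will deliver the i.i.d.\ assertion in~(\ref{elements}) and the claimed independence from $(\nu_1,T_{\nu_1},\{X_i\}_{0\le i\le T_{\nu_1}})$.

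\emph{Geometric-trial construction for $\nu_1$.} I will define a subsequence $(\kappa_m)_{m\ge 1}$ of record indices at which independent Bernoulli tests are performed: set $\kappa_1:=1$, and if $W_{\kappa_m}=0$ then the exit time $F_{\kappa_m}:=\inf\{i>T_{\kappa_m}:X_i-X_{T_{\kappa_m}}\notin\widehat{\mathcal{C}}\}$ is a.s.\ finite, and I set $\kappa_{m+1}:=\min\{k>\kappa_m:T_k>F_{\kappa_m}\}$. The key point is that $\{\kappa_{m+1}=k\}\in\mathcal{F}_{T_k}$, so Condition~3 applied at $T_{\kappa_{m+1}}$ makes $W_{\kappa_1},W_{\kappa_2},\ldots$ i.i.d.\ Bernoulli$(\widehat{p})$, whence $M:=\min\{m:W_{\kappa_m}=1\}$ is geometric and $\nu_1=\kappa_M$.

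\emph{Main obstacle: ``failure'' cycle control.} Turning $M$ into an exponential moment for $\nu_1$ requires a uniform finite exponential moment for the gap $T_{\kappa_{m+1}}-T_{\kappa_m}$ conditional on $W_{\kappa_m}=0$. I decompose this gap as $(F_{\kappa_m}-T_{\kappa_m})+(T_{\kappa_{m+1}}-F_{\kappa_m})$; the second summand has a uniform exponential moment by Proposition~\ref{prop: finite Tk}, so the delicate step is to show $\mathbb{P}(F_1-T_1>t,\,F_1<\infty)=\mathbb{P}(F_1-T_1>t)-\widehat{p}\le Ce^{-ct}$. I would argue this by noting that, conditional on remaining in the $X_{T_1}$-shifted copy of $\widehat{\mathcal{C}}$ through time $t$, the displacement $X_{T_1+t}-X_{T_1}$ is concentrated around $tL$ with $L$ in the interior of $\widehat{\mathcal{C}}$ (Proposition~\ref{prop3}); the residual probability of eventually exiting from such a deep interior position is exponentially small in $\mathrm{dist}(tL,\partial\widehat{\mathcal{C}})=\Omega(t)$ by Chernoff arguments exactly in the spirit of Proposition~\ref{prop1}.

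\emph{Wrap-up.} Given that uniform bound, the geometric-sum decomposition of i.i.d.\ ``failure'' blocks yields $\mathbb{E}\exp(cT_{\nu_1})<\infty$ for sufficiently small $c>0$; since $\nu_1\le T_{\nu_1}+1$, this also gives $\mathbb{E}\exp(c\nu_1)<\infty$. Re-running the identical argument from the regeneration point $T_{\nu_k}$---legitimate thanks to the distributional part established above---delivers the same bounds for $\nu_{k+1}-\nu_k$ and $T_{\nu_{k+1}}-T_{\nu_k}$, completing the proof.
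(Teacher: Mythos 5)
Your overall architecture (Bernoulli trials at record times with success probability $\widehat{p}$ coming from Condition~3, then conversion of a geometric trial count into exponential moments) matches the paper's, and your $\kappa_m$ construction --- testing only at the first record time occurring after the previous failure has been witnessed, so that $\{\kappa_{m+1}=k\}\in\mathcal{F}_{T_k}$ and the trials are genuinely i.i.d.\ Bernoulli$(\widehat{p})$ --- is in fact a more careful treatment of the measurability issue than the paper's one-line bound $\mathbb{P}\bigl(\bigcap_{i=2}^{n+1}\widehat{D}^c_{T_i}\bigr)\le(1-\widehat{p})^n$. (A minor slip: you only get $\nu_1\le\kappa_M$, not equality, since a record index lying strictly between $\kappa_m$ and $\kappa_{m+1}$ could in principle itself be a success; this is harmless for the tail bound.) The decisive difference is in how the geometric count is converted into a bound on $T_{\nu_1}$. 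The paper counts \emph{record indices}: $\nu_2-\nu_1$ is dominated by a geometric variable, and $T_{\nu_2}-T_{\nu_1}$ is then a geometric-length sum of inter-record gaps $T_{j+1}-T_j$, each of which already has a uniformly bounded conditional exponential moment by Proposition~\ref{prop: finite Tk}; no control of the cone-exit time is ever needed. Your decomposition instead forces you to bound the length of a failure block, $T_{\kappa_{m+1}}-T_{\kappa_m}\ge F_{\kappa_m}-T_{\kappa_m}$, i.e.\ to prove $\mathbb{P}(F_1-T_1>t,\,F_1<\infty)\le Ce^{-ct}$ --- an estimate the paper's route avoids entirely.

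That estimate is where your sketch has a genuine gap. You propose to obtain it ``by Chernoff arguments exactly in the spirit of Proposition~\ref{prop1},'' but Proposition~\ref{prop1} and Corollary~\ref{cor1} control only the projection $S_n=\langle X_n,l\rangle$, for which Condition~1 supplies a uniformly positive one-step conditional drift. Membership in $\widehat{\mathcal{C}}=\mathcal{C}\cap\mathbb{R}^d_+$ is governed by the individual coordinates $X_{n,j}$, and there is no per-step drift condition in those coordinates: in the intended application the one-step drift is $d(\mu_{A_n})$, and each of $d(\mu_r)$, $d(\mu_g)$ may have a strictly negative coordinate (see Figure~\ref{fig:red-greensystem loadbalancing}), so neither ``$X_{T_1+t,j}-X_{T_1,j}$ is concentrated near $tL_j$'' nor ``coordinate $j$ never drops by more than $y$ except with probability $e^{-cy}$'' follows from a Condition-1-type Chernoff bound. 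The positivity of $L_j$ is visible only through the regenerative structure, so your estimate would have to be rebuilt as a large-deviation bound for the i.i.d.\ cycle increments $U_{k,j}=X_{T_{k+1},j}-X_{T_k,j}$ and the within-cycle minima $V_{k,j}$ used in the proof of Proposition~\ref{prop4} (these do have exponential moments by Proposition~\ref{prop3}, so the program is completable, but it is a nontrivial extra layer you have not supplied). The simpler fix is to adopt the paper's bookkeeping: dominate $\nu_2-\nu_1$ geometrically and then sum the inter-record gaps, which requires only Proposition~\ref{prop: finite Tk} and the argument of its Step 1.3.
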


\begin{proof}[Proof of Proposition \ref{prop5}].
It is enough to prove that $\nu_2-\nu_1$ has an exponential moment. 

Let $A^c$ denote the complement of event $A$. We have that, for any $k\leq m$,
\begin{align*}
{\mathbb P} (\nu_2-\nu_1 > n \ | \ \nu_1=k, T_k=m, {\cal F}_m) 
&= {\mathbb P} 
\left( \bigcap_{i=2}^{n+1} \widehat{D}^c_{T_i}
\ | \ \nu_1=1, T_1=m \right) \\
&\leq 
\frac{ {\mathbb P}\left( 
 \bigcap_{i=2}^{n+1} \widehat{D}^c_{T_i}\right)}{
 {\mathbb P}(\nu_1=1, T_1=m)}
= (1-\widehat{p})^n/q
\end{align*}
where we choose $m$ such that
$u:={\mathbb P}(T_1=m)>0$, and then
$q:= {\mathbb P}(\nu_1=1, T_1=m) =
u\widehat{p}>0$. \qed
\end{proof}

\begin{cor}\label{corA4}
Assume that the initial environment ${\cal F}_0$ and the jump distributions up to time $n$ are arbitrary. Assume further that the conclusions of Proposition \ref{prop3} hold. 
Assume there is (the smallest) $n$ such that the event $E_{0,n}$ has a positive probability.
Then, 
given the event $E_{0,n}\cap\widehat{D}_n$ occurs, the statement of Proposition \ref{prop5} continues to hold.
\end{cor}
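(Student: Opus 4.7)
\textbf{Proof plan for Corollary \ref{corA4}.} The strategy is to show that on the event $E_{0,n}\cap \widehat{D}_n$ the deterministic time $n$ coincides with some $T_{\nu_j}$, and then to invoke Proposition~\ref{prop5} directly, starting the renewal count from that index $j$ instead of from $1$.

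The first step is to verify the inclusion
\begin{equation*}
E_{0,n}\cap \widehat{D}_n \ \subset \ \bigcup_{j\geq 1}\{T_{\nu_j}=n\}.
\end{equation*}
The definition of $E_{0,n}$ in \eqref{Eij} forces $S_n>\max_{0\leq m<n}S_m$, which is exactly the event $M_n$. Moreover $\widehat{D}_n$, as defined in \eqref{hatMn}, requires $X_{n+i}-X_n\in \widehat{\mathcal{C}}$ for every $i\geq 1$; in particular $\langle X_{n+i}-X_n,l\rangle>0$, so $S_{n+i}>S_n$, which is $D_n$. Hence $n\in\{T_k\}_{k\geq 1}$, and since $\widehat{D}_n$ is precisely the extra requirement that singles out the subsequence $\{T_{\nu_j}\}_{j\geq 1}$ from $\{T_k\}_{k\geq 1}$, one gets $n=T_{\nu_j}$ for some (random) $j\geq 1$.

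With this identification, I would shift the time origin from $0$ to $n=T_{\nu_j}$. By Proposition~\ref{prop5}, the blocks
\begin{equation*}
\bigl(\nu_{k+1}-\nu_k,\ T_{\nu_{k+1}}-T_{\nu_k},\ \{X_{T_{\nu_k+i}}-X_{T_{\nu_k}}:1\leq i\leq T_{\nu_{k+1}}-T_{\nu_k}\}\bigr),\qquad k\geq 1,
\end{equation*}
are i.i.d.\ with finite exponential moments and are independent of everything up to $T_{\nu_1}$. Conditioning first on the partition event $\{T_{\nu_j}=n\}$, the post-$T_{\nu_j}$ blocks form an i.i.d.\ copy of the post-$T_{\nu_1}$ blocks and are independent of the pre-$n$ history; re-indexing $k\mapsto k-j+1$ gives the conclusion of Proposition~\ref{prop5} with $n$ playing the role of the first regeneration time. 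Summing over $j$ (equivalently, integrating against the measure induced on $\{T_{\nu_j}=n\}$) yields the statement conditionally on $E_{0,n}\cap\widehat{D}_n$.

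The main technical obstacle will be handling the conditioning cleanly: the event $E_{0,n}\cap \widehat{D}_n$ constrains the future trajectory to remain in $\widehat{\mathcal{C}}$, which is precisely the property that defines the regenerations, so a naive conditioning could appear to bias the post-$n$ distribution. The way around this is to replace the conditioning on $E_{0,n}\cap \widehat{D}_n$ by conditioning on the finer partition $\{T_{\nu_j}=n\}$: on each such slice, Condition~3 (applied at the stopping time $T_{\nu_j}$, exactly as in the proof of Proposition~\ref{prop5}) ensures that the future increments are independent of $\mathcal{F}_{T_{\nu_j}}$ and distributed as in the original regenerative scheme. Exponential moments of the new cycle lengths pass through by the same geometric bound $(1-\widehat{p})^{n}/q$ derived in the proof of Proposition~\ref{prop5}, with $\widehat{p}$ replaced by the probability of the analogous cone-success event. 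This completes the plan.
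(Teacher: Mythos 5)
The paper states Corollary~\ref{corA4} without any proof, so there is no official argument to compare against; your plan has to be judged on its own. It is essentially the argument the authors clearly intend: the remark immediately preceding Proposition~\ref{prop5} already records that $D_n\cap M_n$ forces $n\in\{T_k\}_{k\ge 1}$, and your two observations --- that the last clause of $E_{0,n}$ in \eqref{Eij} is exactly $M_n$, and that $\widehat{D}_n$ implies $D_n$ because $\widehat{\mathcal{C}}\subset\{z:\langle z,l\rangle>0\}$ --- correctly upgrade this to $n=T_{\nu_j}$ for some random $j$. The key point you identify, that one must condition on the finer events $\{T_{\nu_j}=n\}$ rather than directly on $E_{0,n}\cap\widehat{D}_n$, is the right one: $E_{0,n}\cap\{T_{\nu_j}=n\}$ is a function of the pre-$T_{\nu_1}$ data and the first $j-1$ regenerative blocks of Proposition~\ref{prop5}, hence independent of the blocks indexed $k\ge j$, so the conditional law of the post-$n$ blocks is unchanged and the re-indexing $k\mapsto k-j+1$ is legitimate. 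Two small points to tighten if you write this out. First, the corollary allows the jump distributions up to time $n$ to be arbitrary, so Proposition~\ref{prop5} cannot be invoked for the whole process; you should apply it to the time-shifted process $(X_{n+m}-X_n)_{m\ge 0}$, for which Conditions 1--3 hold and for which $m=0$ is a record time on the event in question --- your identification $n=T_{\nu_j}$ then only matters for bookkeeping. Second, your closing sentence about re-deriving exponential moments ``with $\widehat{p}$ replaced by the probability of the analogous cone-success event'' is unnecessary: once the post-$n$ blocks are identified in law with the blocks of Proposition~\ref{prop5}, their exponential moments are inherited verbatim, with no new geometric estimate needed. With those adjustments the plan is a complete and correct proof.
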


\section{ Conditions 1-3 for LLPs in Model \ref{Model2}}\label{appendix:conditions1-3forLLP}


In this section, we show that any LLP in Model 2 satisfies Conditions 1-3. As a consequence, all results from Appendix  \ref{appendix:mathematicalframework} are applicable to LLPs in Model \ref{Model2} . Using these findings, we can conclude the proofs of Theorem \ref{thm:learningdrift} (Appendix \ref{sec:proofthm}) and Theorem \ref{thm:MainResult} (Appendix \ref{sec:proofMainresult}).

For LLPs in Model \ref{Model2} we consider $\mathcal{G} = \{ \mu _r, \mu_g\}$ and the filtration defined by $\mathcal{F}_n = \sigma ( X_0, A_0, \dots,X_n,A_n)$. Then $(X_n)_{n \geq 0}$ satisfies Condition 1 since for $S_n ^{(l)} = \langle X_n , l \rangle $ we obtain using \eqref{eq:model2 2} that
\begin{align*}
    \varrho: = &\inf_{n\geq 0} \mathbb{E} ( S_{n+1} ^{(l)} - S_{n} ^{(l)} \ | \ \mathcal{F}_n )  \\
    &=\inf_{n \geq 0} \langle d(\mu_{A_n} ) , l \rangle \\
    & =\min \{ \langle d(\mu_r), l \rangle, \langle d(\mu_g), l \rangle \} > 0,
\end{align*}
which is exactly \eqref{G0}.

Similarly for Condition 2 we have that $\xi_n = X_{n+1} - X_n$ given $\mathcal{F}_n$ has distribution $\mu_{A_n}$ then \eqref{eq: model2 3} is exactly \eqref{G4}. Proposition \ref{prop: finite Tk} can be applied to $(X_n)_{n \geq 0} $ and implies that the random times $(T_k)_{k\geq0}$ given by \eqref{T1} and \eqref{Tk} are finite a.s. 

Finally Condition 3 holds due to the following lemma.

\begin{lemma}\label{prop: local is renewal}
    For all $x_1,\dots,x_N \in \mathbb{Z}^2$ 
    \begin{align*}
       &\mathbb{P} ( X_{T_k + 1} - X_{T_k} = x_1, \dots, X_{T_k + N} - X_{T_k} = x_N ) = \mathbb{P}( X_1 = x_1, \dots, X_N = x_N \ | \ \mathbf{I}(D_0) = 1 ). 
    \end{align*}
\end{lemma}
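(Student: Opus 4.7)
The core idea is to exploit the translation invariance of the transition kernel in Model~\ref{Model2} ($p(x+\xi\mid x,a)=\mu_a(\xi)$) together with the state-invariance \eqref{homogeneous environment} of the initial environment. The strategy is to decompose on $\{T_k=m\}$ for each $m$, identify the post-$T_k$ dynamics with a fresh copy of the LLP started from $0$ conditioned on $D_0$, and sum over $m$ to conclude.

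The geometric key is that on $M_m\cap D_m$ the chain of inequalities
\begin{align*}
\langle X_j,l\rangle=S_j<S_m<S_{m+n}=\langle X_{m+n},l\rangle, \qquad j<m,\ n\geq 1,
\end{align*}
shows that $X_m$ and all post-$m$ states lie in an open half-plane disjoint from $\{X_0,\ldots,X_{m-1}\}$. Since the update rule modifies $\beta$ only at visited state--action pairs, at each such fresh site $y$ the environment equals $\beta_0(y,\cdot)=\beta_0(0,\cdot)$ by \eqref{homogeneous environment}; in particular $\beta_m(X_m,\cdot)=\beta_0(0,\cdot)$. Hence, conditional on $\F_m$ restricted to $M_m$, the shifted process $(X_{m+n}-X_m)_{n\geq 0}$ agrees in law on $D_m$ with a fresh LLP $(\widetilde X_n)_{n\geq 0}$ started from $\widetilde X_0=0$ on the event $\widetilde D_0=\{\inf_{k\geq 1}\langle \widetilde X_k,l\rangle>0\}$: both processes are driven by $\varphi,\psi,p$ applied to space-invariant fresh environments at their visited sites, and translation invariance of $p$ handles the absolute shift by $X_m$.

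Next I would check that $\{T_k=m\}$ admits a factorization $A_m\cap M_m\cap D_m$ with $A_m\in\F_m$. Although each $D_n$ with $n<m$ is a future event, on $M_m\cap D_m$ the tail constraint $S_{m+r}>S_n$ is automatic since $S_{m+r}>S_m>S_n$, so $D_n$ collapses to the $\F_m$-measurable event $\min_{n<j\leq m}S_j>S_n$. Thus the count of prior indices with $M_n\cap D_n$ is $\F_m$-measurable on $M_m\cap D_m$, giving the desired $A_m$. Combining with the tower property and the previous paragraph,
\begin{align*}
\P(T_k=m,X_{m+1}-X_m=x_1,\ldots,X_{m+N}-X_m=x_N)
&=\E\!\left[\I_{A_m\cap M_m}\,\P\!\left(\widetilde D_0,\widetilde X_1=x_1,\ldots,\widetilde X_N=x_N\right)\right] \\
&=\P(A_m\cap M_m)\,\P(\widetilde D_0)\,\P(\widetilde X_1=x_1,\ldots,\widetilde X_N=x_N\mid \widetilde D_0).
\end{align*}
Leaving the $x_i$ free gives $\P(T_k=m)=\P(A_m\cap M_m)\,\P(\widetilde D_0)$, so dividing and summing over $m$ yields the identity claimed in the lemma.

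The main obstacle is that $T_k$ is not a stopping time, so a strong Markov property is not directly available. The resolution is the observation above: on the specific future event $M_m\cap D_m$ appearing in the definition of $T_k$, the historical $D_n$-events for $n<m$ collapse to past-measurable conditions, so the past/future separation goes through. Once this is achieved, the interplay between translation invariance of $p$ and the state-invariance of $\beta_0$ gives the result cleanly.
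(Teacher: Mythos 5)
Your proof is correct and follows essentially the same route as the paper's: both rest on the translation invariance of $p$, the state-invariance of $\beta_0$, the locality of the update rule, and the observation that on $M_m\cap D_m$ all post-$m$ states lie in a half-plane of never-visited sites, so the shifted process couples with a fresh LLP started at $0$ conditioned on $D_0$. Your explicit factorization $\{T_k=m\}=A_m\cap M_m\cap D_m$ with $A_m\in\F_m$ (obtained by collapsing the historical events $D_n$, $n<m$, to past-measurable conditions) is in fact a more careful treatment of the non-stopping-time issue than the paper's, which conditions on the full path event $\Omega'$ and dispatches the remaining induction in one line.
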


\begin{proof}    
For the sake of formality, we temporarily adopt the following notation, making explicit the environment dependence on the state-action path:
\begin{equation}
    \beta_{N+1} ( x , a ) [x_0,a_0,x_1,a_1,\dots,a_N,x_{N+1}].
\end{equation}
We start with the following observation: the environment at time $N $ left by an agent starting from $x_0$ is the same as the translation by $x_0$ of the environment at time $N$ left by an agent starting from $0$, explicitly
\begin{align}
        &\beta_{N+1} ( x , a ) [x_0,a_0,x_1,a_1,\dots,a_N,x_{N+1}]  \notag \\ 
                & = \beta_{N+1} ( x + x_0, a ) [0,a_0,x_1-x_0,a_1,\dots,a_N,x_{N+1}] \label{lemma:shift env}
    \end{align}
    We can still be more precise about how the environment depends on the state-action sequence.
    Definition~\ref{defn:learningprocess} indicates that the environment at a given state-action pair is determined solely by the past behavior of the process at that state.
More precisely,  consider the indices $0\leq i_1 <\dots < i_M \leq N$ such that $ (x_{i_j},a_{i_j}) = (x,a)$. Then
\begin{align}
    &\beta_{N+1} ( x , a ) [x_0,a_0,x_1,a_1,\dots,a_N,x_{N+1}] \notag \\ 
                &= \beta_{M+1} ( x , a ) [x_{i_1},a_{i_1},x_{i_1+1},a_{i_2},\dots,x_{i_{M}},a_{i_M},x_{i_M+1}] \label{lemma: environment independence}
\end{align}

With \eqref{lemma:shift env} and \eqref{lemma: environment independence} we prove Lemma \ref{prop: local is renewal} as follows. Fix $x_0 \in \mathcal{S}$, and consider $T \geq 1$ and $x_1,\dots,x_T \in \mathbb{Z}^2$ and $a_0,\dots,a_{T-1} \in \mathcal{A}$ such that the event $\Omega ' = \{  T_k = T, A_0 = a_0, X_1 = x_1, \dots, A_{T-1} = a_{T-1}, X_T = X_T \}$ satisfies $ \mathbb{P} (\ \Omega ' \ | \ X_0 = x_0 \ ) > 0. $

Now fix also $N \geq 1$ and $\xi_1,\dots,\xi_N \in \mathbb{Z}^2$ and $b_0,\dots,b_{N-1} \in \mathcal{A}$. Assume that the path defined by $\xi_1 \to \dots \to \xi_N$ belongs to the nonnegative quadrant $\R^2_{+}$. It suffices to show that:
\begin{align}
    &\mathbb{P} ( A_{T} = b_0, X_{T+1} - x_T = \xi_1,\dots, A_{N-1} = b_{N-1}, X_{T+N} - x_T = \xi_N \ | \ \Omega ' \ ) \notag \\
    & = \mathbb{P} ( A_0 = b_0, X_{1} = \xi_1,\dots, A_{N-1} = b_{N-1}, X_{N} = \xi_N \ | \ X_0 = 0, \ \mathbf{I}(D_0) = 1 ). \notag
\end{align}
This easily follows by induction. \qed

\end{proof}


Therefore we have obtained that any LLP $(X_n,A_n,\beta_n)_{n\geq 0}$ in Model \ref{Model2} satisfies Conditions 1-3. In particular Proposition \ref{prop: finite Tk} and \ref{prop3} hold for $(X_n)_{n\geq 0}$. 

\section{Proof of Theorem \ref{thm:learningdrift}} \label{sec:proofthm}

Denote by $N_{k}^{(r)}$ the number of times within the time interval $(T_k,T_{k+1}]$ when action $r$ is taken and by $N_k^{(g)}$ the number of times when action $g$ is taken. Clearly, $N_k^{(r)}+N_k^{(g)}=T_{k+1}-T_k$.
\\
It follows from Proposition \ref{prop5} that the random variables $\{N_k^{(r)}\}_{k\geq 1}$ are i.i.d. with a finite mean (and, in fact, with finite exponential moments). Therefore, the SLLN holds:
\begin{equation*}
\frac{1}{m} \sum_{k=1}^m N_k^{(r)} \to {\mathbb E} N_1^{(r)} \ \text{a.s. and in} \ 
{\cal L}_1.
\end{equation*}
Then the SLLN for compound renewal processes implies that

\begin{align*}
   \lim _{n \to \infty} \frac{1}{n} \sum_{i=1}^n {\mathbb I} (A_i=r) 
   & = \frac{1}{{\mathbb E} (T_2-T_1)}  \lim_{m \to \infty } \frac{1}{m} \sum_{k=1}^m N_k^{(r)}  \\
   & = \frac{{\mathbb E} N_1^{(r)}}{{\mathbb E} (T_2-T_1)} =:\alpha >0, \ \ \text{a.s. and in} \ {\cal L}_1. 
\end{align*}
\qed

\subsection{The average number of times the agent applies red action}

Now we will obtain upper and lower bounds for $\alpha$, the average number of times the agent applies red action, defined by \eqref{alpha1}. In particular, since the limiting drift $L$ given by \eqref{newL2} is characterized in Theorem \ref{thm:learningdrift} as $L = \alpha d(\mu_r) + (1-\alpha)d(\mu_g)$ this bounds are aimed to find conditions for $L$ to be in the positive quadrant.

\begin{lemma}\label{cor:lowerbound}
    For any LLP in Model \ref{Model2} consider $q = \varphi ( \ r \ | \ \beta_0 ( x, \cdot ) )$, $\varrho = \min\{ \langle d (\mu_r ), l \rangle , \langle d (\mu_g ), l \rangle \}$ and $\alpha$ given by Theorem \ref{thm:learningdrift}. Then \begin{equation}\label{eq:alphabound}
        1-(1-q)\varrho \geq \alpha \geq  q \varrho 
    \end{equation}
\end{lemma}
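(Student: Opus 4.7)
The plan is to express $\alpha$ as a ratio of renewal-cycle expectations and then to lower-bound the first-visit density inside a cycle by $\varrho$. From the proof of Theorem~\ref{thm:learningdrift} (applying the SLLN for compound renewals to the i.i.d.\ cycles produced by Proposition~\ref{prop3}), if $N_k^{(r)}$ and $N_k^{(g)}$ denote the numbers of $r$- and $g$-actions taken in the cycle $(T_k, T_{k+1}]$, then
\begin{equation*}
\alpha \;=\; \frac{\E N_1^{(r)}}{\E(T_2 - T_1)}, \qquad 1 - \alpha \;=\; \frac{\E N_1^{(g)}}{\E(T_2 - T_1)} .
\end{equation*}
So it is enough to bound $\E N_1^{(r)}$ from below and $\E N_1^{(g)}$ from above in terms of $\E(T_2 - T_1)$.

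The first key step exploits state invariance at first visits. Let $F_1^{\mathrm{cyc}}$ be the number of indices $i \in (T_1, T_2]$ at which $X_i \notin \{X_0, \ldots, X_{i-1}\}$. At each such first visit the environment at $X_i$ has not yet been updated, so by \eqref{homogeneous environment} the action $A_i$ is drawn from the fixed law $\varphi(\,\cdot\mid \beta_0(0,\cdot))$, which places mass $q$ on $r$ independently of the past. Therefore
\begin{equation*}
\E N_1^{(r)} \;\geq\; q\,\E F_1^{\mathrm{cyc}}, \qquad \E N_1^{(g)} \;\geq\; (1-q)\,\E F_1^{\mathrm{cyc}} .
\end{equation*}
The lemma then follows once the inequality $\E F_1^{\mathrm{cyc}} \geq \varrho\,\E(T_2 - T_1)$ is established.

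I would prove this last inequality through a record argument. Within cycle 1 one has $S_i > S_{T_1}$ for all $T_1 < i \leq T_2$, while $S_j \leq S_{T_1}$ for $j \leq T_1$; hence any index $i \in (T_1,T_2]$ at which $S_i > \max_{T_1 \leq j < i} S_j$ corresponds to a state not visited before, so $F_1^{\mathrm{cyc}}$ dominates the number $m$ of such cycle-local records. Because $l$ is a unit vector and the jumps satisfy $|\psi_i| \leq 1$ in the Model~\ref{Model} setting to which the result is applied (where $\xi_i \in \{\pm e_1, \pm e_2\}$), each record increment of $S$ is at most $1$; summing over records $T_1 = j_0 < j_1 < \dots < j_m = T_2$ between $T_1$ and $T_2$ gives $F_1^{\mathrm{cyc}} \geq m \geq S_{T_2} - S_{T_1}$. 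Taking expectations and using Proposition~\ref{prop3} together with Remark~\ref{rem11}, $\E(S_{T_2} - S_{T_1}) = \langle L, l\rangle\,\E(T_2 - T_1) \geq \varrho\,\E(T_2 - T_1)$, which closes both sides of \eqref{eq:alphabound}.

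The main obstacle is precisely the pathwise bound $F_1^{\mathrm{cyc}} \geq S_{T_2} - S_{T_1}$: it combines the identification of cycle-local records with globally new states (which hinges on the strict record-and-stay structure of the renewal times $T_k$) with a unit-jump normalization bounding each record increment by $1$; without both of these ingredients one only obtains a weaker constant in place of $\varrho$.
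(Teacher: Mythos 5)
Your proof is correct in substance and rests on the same two pillars as the paper's argument: (a) at every first visit to a state the initial, state-invariant environment is still in place, so action $r$ is taken with conditional probability exactly $q$; and (b) the frequency of first visits is at least $\varrho$, because the process advances in direction $l$ at rate at least $\varrho$ and, with unit-length jumps, each unit of advance forces a new state. The difference is in the bookkeeping: the paper works with adapted time-averages, setting $\widetilde\alpha=\lim_n \frac1n\sum_i \mathbf{I}(X_i\ \text{new})$, writing $\alpha=q\widetilde\alpha+\widehat\alpha(1-\widetilde\alpha)\geq q\widetilde\alpha$ (and symmetrically for $g$), and deducing $\widetilde\alpha\geq\varrho$ from $\liminf_n S_n/n\geq\varrho$ together with the observation that $S_n>z$ forces at least $z$ new states. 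You push everything into a single regeneration cycle instead ($\alpha=\E N_1^{(r)}/\E(T_2-T_1)$, records inside $(T_1,T_2]$, and $\E(S_{T_2}-S_{T_1})=\langle L,l\rangle\,\E(T_2-T_1)\geq\varrho\,\E(T_2-T_1)$). Your record argument is a sound pathwise refinement of (b) — each cycle-local record is a globally new state because of the record-and-stay structure of the $T_k$, and record increments of $S$ are at most $1$ — and your restriction to unit jumps is the same one the paper makes implicitly in its own proof.

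One step needs more care: the inequality $\E N_1^{(r)}\geq q\,\E F_1^{\mathrm{cyc}}$. You justify it by saying that at a first visit $A_i$ is drawn from $\varphi(\cdot\mid\beta_0(0,\cdot))$ \emph{independently of the past}, and then take expectations over the cycle. But the indicator $\mathbf{I}(T_1<i\leq T_2)$ is not $\F_i$-measurable: the regeneration times are defined through the events $D_n$, which depend on the entire future, and that future depends on $A_i$. So you cannot simply factor $q$ out of $\E\bigl[\mathbf{I}(A_i=r)\,\mathbf{I}(X_i\ \text{new})\,\mathbf{I}(T_1<i\leq T_2)\bigr]$. The claim is nonetheless true, and the clean repair is essentially the paper's route: since $\{X_i\ \text{new}\}\in\F_i$, the sum $\sum_i\bigl(\mathbf{I}(A_i=r)-q\bigr)\mathbf{I}(X_i\ \text{new})$ is a martingale with bounded increments, so $\frac1n\sum_i\mathbf{I}(A_i=r,\,X_i\ \text{new})\to q\widetilde\alpha$ a.s., and the renewal-reward theorem then identifies this limit with $\E R_1/\E(T_2-T_1)$, where $R_1$ counts red actions at new states within a cycle; this yields $\E R_1=q\,\E F_1^{\mathrm{cyc}}$. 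With that patch your argument closes, but as written the per-cycle factorization is a gap that the paper's adapted time-average formulation is designed to avoid.
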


\begin{proof}
Let $\widetilde{\alpha}$ be the frequency of times when the process visits a new state (i.e. a state that was not visited before), namely
\begin{align*}
    \widetilde{\alpha} = \lim_{n\to\infty} \frac{1}{n} 
    \sum_{i=1}^n {\mathbf I} (X_i \ \ \text{visits a new state}).
\end{align*}
This limit exists a.s. and in ${\cal L}_1$, that it may be shown completely analogously to that for $\alpha$.
Then
\begin{align}\label{aliphar}
    \alpha = q \widetilde{\alpha} + \widehat{\alpha} (1-\widetilde{\alpha}) \geq q \widetilde{\alpha},
\end{align}
where $\widehat{\alpha}$ is the frequency of times for choosing action $r$ when visiting an ``old'' state (the existence of $\widehat{\alpha}$ follows directly). 
Switching from action $r$ to action $g$, we arrive at the inequality
\begin{align}\label{alphag}
    1-\alpha = (1-q)\widetilde{\alpha} + (1-\widehat{\alpha}) (1-\widetilde{\alpha}) \geq 
    (1-q)\widetilde{\alpha}.
\end{align}

Now, Conditions 1 and 2 from Appendix A imply that, for $\varrho = \min \{\langle d(\mu_r),l \rangle , \langle d(\mu_g), l \rangle \}$,
\begin{align*}
    \liminf_{n\to\infty} \frac{S_n}{n} \geq \varrho , \quad \text{a.s.}
\end{align*}
Since all random variables $\{S_n/n\}$ take values between $-1$ and $1$, they are uniformly integrable and, therefore,
\begin{align*}
    \liminf_{n\to\infty} \frac{{\mathbb E}(S_n)}{n} \geq \varrho.
\end{align*}

Since the jumps of the processes are of size $1$ only and $l$ is a unit vector, for any $n$ and $z$ the inequality $S_n>z$ implies that the number of visited new states
by time $n$ is at least $z$. Therefore, $\widetilde{\alpha} \geq \varrho$ and then $\alpha \geq q\varrho$ and
$1-\alpha \geq (1-q)\varrho$.
\qed
\end{proof}

\section{Proof of Theorem \ref{thm:MainResult}} \label{sec:proofMainresult}

 Let $\MM = (\SS,\A,p)$ be as in Model \ref{Model}. Consider $0< \alpha_0 < \alpha_1 \leq 1$ such that $\delta d_r + (1-\delta)d_g \in \R^2_+$ for all $\delta \in (\alpha_0,\alpha_1)$. Assumption \eqref{parameter5} implies that $0 < q_0 := \frac{\alpha_0}{\varrho} < q_1: = \frac{\alpha_1+ \varrho - 1}{\varrho} \leq 1$. Then, for $q \in (q_0,q_1)$, we have 
\begin{equation}\label{eq:bounds q}
    \alpha_0 < q \varrho \ \text{ and } \ 1 - (1-q)\varrho < \alpha_1
\end{equation}

    Fix any LLP in $\MM$ with $q := \varphi ( r \ | \ \beta_0 ( s, \cdot ) ) \in (q_0,q_1)$ and consider its associated free process (see Remark \ref{rmk:freeprocess}). This free process is an LLP in Model \ref{Model2}, so that as shown in Appendix \ref{appendix:conditions1-3forLLP} it fits the framework of Appendix \ref{appendix:mathematicalframework}. In particular, thanks to Lemma \ref{cor:lowerbound} and the inequalities in \eqref{eq:bounds q},  the free process has a limiting drift $L$ with strictly positive coordinates. Then the results of Appendix \ref{appendix:renewalcones} (and, in particular, Lemma \ref{corA3}) may be applied to the free process.

We first establish the existence of an almost surely finite time after which the state process and a free process, started at that time, couple. Let 
\begin{align*}
    &T_{\nu_1} = \min \{ n\geq 1: X_{n,1}>0, X_{n,2}>0, X_{n,1}+X_{n,2} > \max_{k<n} (X_{k,1}+X_{k,2}), {\mathbf I}(\widehat{D}_n)=1 \}.
\end{align*}
We show that this time is controlled by a geometric sum of almost surely finite random variables.
Set  \begin{align*}
&\theta_1 = \min \{n\ge 1:   X_{n,1}>0, X_{n,2}>0,  X_{n,1}+X_{n,2} > \max_{k<n} (X_{k,1}+X_{k,2})\}.  
\end{align*}
By the irreducibility of the process $(X_n)_{n\geq 0}$, the time $\theta_1$ is almost surely finite.
If the event $\widehat{D}_{\theta_1}$ occurs, then $T_{\nu_1} = \theta_1$.
If not, we introduce two additional random times:
\begin{equation*}
 \kappa_1 = \min \{m\ge 1: X_{\theta_1+m} -X_{\theta_1} \notin \widehat{\cal C} \}  
\end{equation*}
and then
\begin{align*}
    &\theta_2 =
   \min \{n\ge \theta_1+\kappa_1:   X_{n,1}>0, X_{n,2}>0, X_{n,1}+X_{n,2} > \max_{k<n} (X_{k,1}+X_{k,2})\}.  
\end{align*}
 If $\widehat{D}_{\theta_1}$ does not occur, then $\kappa_1$ must be finite. Then, again by the irreducibility, $\theta_2$ is a.s. finite, too, and we have $T_{\nu_1} = \theta_2$ and the claim follow. The induction arguments complete the proof of finiteness of $T_{\nu_1}$. Note that our reasoning is based on Lemma \ref{corA3}, that implies that there is $c >0$ such that
\begin{equation*}
    \P \left( \mathbf{I} (\widehat{D}_{\theta_{k+1}} ) = 1 \ | \ \mathbf{I} (\widehat{D}_{\theta_{k}} ) = 0 , \F_{\theta_{k+1}} \right) \geq c > 0, \quad \text{for all } k \geq 1 . 
\end{equation*}

From the finiteness of $T_{\nu_1}$ we derive the transience of the process $(X_n)_{n\geq 0}$. We now prove \eqref{eq:Loss function}. 
{\bf Proof of the upper bound.} Since in Model 1 all increments $|X_{n+1}|-|X_n|$ cannot be bigger than 1, we get 
$J_{\gamma}\leq 1/(1-\gamma)$ a.s.

{\bf Proof of the lower bound.} Write, for short, $\Delta_i := c(X_i,X_{i+1})= |X_{i+1}|-|X_i|$ (where, recall, $|\cdot|$ the $l_1$ norm), and let 
\begin{equation*}
    {\mathbb E} \sigma_1(\gamma) := {\mathbb E} \left(\sum_{i=0}^{T_{\nu_1}-1} \gamma^i \Delta_i \right).
\end{equation*}
Then 
\begin{align*}\label{discount}
  {\mathbb E} \left(\sum_{i=0}^{+\infty} 
  \gamma^i \Delta_i \right)  &= {\mathbb E} \sigma_1(\gamma)+ \sum_{k=1}^{+\infty}
  {\mathbb E} 
  \left(\sum_{i=0}^{T_{\nu_{k+1}-1}-T_{\nu_k}}
  \gamma^{T_{\nu_k}+i}\Delta_{T_{\nu_k+i}}
  \right)\\
  &={\mathbb E} \sigma_1(\gamma)+
  \sum_{k=1}^{+\infty}
  {\mathbb E} \gamma^{T_{\nu_k}} \cdot 
  {\mathbb E} 
  \left(\sum_{i=0}^{T_{\nu_{k+1}}-T_{\nu_k}-1}
  \gamma^{i}\Delta_{T_{\nu_k+i}}
  \right)\\
  &= {\mathbb E} \sigma_1(\gamma)+\sum_{k=1}^{+\infty}
  {\mathbb E} \gamma^{T_{\nu_k}} \cdot 
  {\mathbb E} 
  \left(\sum_{i=0}^{T_{\nu_{2}}-T_{\nu_1}-1}
  \gamma^{i}\Delta_{T_{\nu_1+i}}
  \right)\\
  &=
  {\mathbb E} \sigma_1(\gamma)+\frac{{\mathbb E} \gamma^{T_{\nu_1}}}{1-{\mathbb E}\gamma^{T_{\nu_2}-T_{\nu_1}}} 
  {\mathbb E}
  \left(\sum_{i=0}^{T_{\nu_2}-T_{\nu_1}-1} \gamma^i\Delta_{T_{\nu_1+i}}\right) \\
  &\equiv {\mathbb E} \sigma_1(\gamma) + \frac{{\mathbb E} \gamma^{T_{\nu_1}}}{1-{\mathbb E}\gamma^{T_{\nu_2}-T_{\nu_1}}} {\mathbb E} \sigma_2(\gamma).
\end{align*}

Here the equality in the second line follows from the mutual independence of regenerative cycles. Now note that: \\
(i)  $\sigma_1(\gamma) +  |X_{0}|  \ge 0$  for any $\gamma\in (0,1]$ since 
\begin{align*}
   |X_0|+ \sum_{i=0}^{T_{\nu_1}-1} \gamma^i \Delta_i =  
   |X_1|+ \sum_{i=1}^{T_{\nu_1}-1} \gamma^i \Delta_i = 
   \gamma^{T_{\nu_1}} |X_{T_{\nu_1}}| 
+ \sum_{i=1}^{T_{\nu_1}-1} (1-\gamma) \gamma^{i-1} |X_{i}| \geq 0;
\end{align*}
(ii) $\sigma_2(\gamma)$ is positive by similar reasons;
\\
(iii) using the results from the previous Section, one can conclude that $\sigma_2(\gamma)$ tends a.s. and in ${\cal L}_1$ to $|X_{T_{\nu_{k+1}}}|-|X_{T_{\nu_k}}|$, which is strictly positive and has a finite mean;\\
(iv) random variables $\{ \gamma^{T_{\nu_1}}\}$ are uniformly integrable and, therefore, ${\mathbb E} \gamma^{T_{\nu_1}} \uparrow 1$, as $\gamma \to 1$;
\\
(v)
since $T_{\nu_2}-T_{\nu_1}$ has an exponential moment,
\begin{align*}
 1- {\mathbb E} \gamma^{T_{\nu_2}-T_{\nu_1}} \sim (1-\gamma) {\mathbb E}  
 (T_{\nu_2}-T_{\nu_1}), \text{ as } \gamma \to 1.
\end{align*}

All in all, 
\begin{align*}
   {\mathbb E} \left(\sum_1^{\infty} 
  \gamma^i \Delta_i \right) \geq
  \frac{C}{1-\gamma} - |X_0| \geq 
  \frac{C'}{1-\gamma}
\end{align*}
for $\gamma$ close enough to $1$,
where $C$ and $C'$ are strictly positive and finite constants. \qed

\section{Model~\ref{Model}}\label{appendix:loadbalancing}

This section is devoted to the proofs of Lemma~\ref{lemma:lyapunovgreen}, Lemma~\ref{lemma:loadbalancing.},  Lemma~\ref{lemma:serverallocation.} and Lemma~\ref{rmk: O(1)}.

\textbf{Proof of Lemma \ref{lemma:lyapunovgreen}}
We consider an appropriated Lyapunov function. Take $v \in \R^2_+$ as in \eqref{parameter2}, then let $V(x) = \langle x , v \rangle ^2$ and $F(x) = \langle x , v \rangle$. We claim that there are $B,c>0$ s.t. for all $x \in \Z^2_{+}$
\begin{equation}\label{eq:lyapunov}
    \E( V(X_1) - V(X_0) \ | \ X_0 = x ) \leq -c F(x) +B
\end{equation} 
Thus, Lemma~\ref{lemma:lyapunovgreen} follows from \eqref{eq:lyapunov} and \cite[Theorem~14.3.7]{meyntweedie}.

We derive \eqref{eq:lyapunov} in the following way. Observe that $V(y) - V(x) = \langle y + x , v \rangle \langle y-x, v \rangle$, from this we conclude \begin{align*}
    \E( V(X_1) - V(X_0) \ | \ X_0 = x ) 
    &= \E ( \langle X_1+x,v \rangle \langle X_1-x,v \rangle \ | \ X_0 = x ) \\
    & = \sum_{\xi} \langle 2x + \xi , v \rangle \langle \xi , v \rangle p(x + \xi \ | \ x , g )\\
    & = 2F(x) \E( \langle X_1 - X_0 , v \rangle \ | \ X_0 = x ) + \sum_{\xi} \langle \xi , v \rangle ^2 p(x + \xi \ | \ x , g ) \\
    & \leq -2b F(x) + B
\end{align*} 
Where $b,B>0$ are such that $\E( \langle X_1 - X_0 , v \rangle \ | \ X_0 = x ) < -b$ for all $x \in \Z^2_{+} \setminus \{(0,0)\}$ (note that $F((0,0))=0)$ ) and $ \sum_{\xi} \langle \xi , v \rangle ^2 p(x + \xi \ | \ x , g ) < B$ for all $x \in \Z^2_{+}$.
\qed

 \textbf{Proof of Lemma \ref{rmk: O(1)}.}
    We start by showing that the total cost is finite, i.e, $J^g(x) = \E(|X_\infty ^g|) - |x| < + \infty$ where $(X_n ^g)_{n \geq 0}$ is the process defined by an agent always making ``green'' action. Since by \eqref{parameter2} there is $v \in \R^2_+$ such that $\E( \langle X_1^g - x , v \rangle \ | \ X_0^g = x ) < -b$ for some $b > 0$ and $x \neq (0,0)$. We can take this $v$ to further satisfy $|x| = \langle x, (1,1) \rangle \leq \langle x , v \rangle$ for all $x \in \Z^2_{+}$, so that by Lemma \ref{lemma:lyapunovgreen} we obtain $\E(|X_\infty ^g|) \leq \E(\langle X_\infty^g, v \rangle ) < + \infty$.

We now show \eqref{eq: O(1)}, this is that for discounted cases, the ``green'' policy gives an order 1 cost.

 For simplicity denote $Z_i = |X_i^g|$ from where $c(X_{i+1}^g,g,X_i^g) = Z_{i+1}-Z_i$.
    We obtain
    \begin{align*}
    J_\gamma^g(X_0) &:= 
    \lim_N \E \left( \sum_{i=0} ^{N} \gamma^i (Z_{i+1}-Z_i) \right) \\
&=  \lim_N \E \gamma^{N} |X_N^g| -  |X_{0}^g| + \lim_N \sum_{i=0}^{N} (1-\gamma) \gamma^i \E |X_{i+1}^g|
\end{align*}
Using the convergence of the first moment of $X_\infty^g$, we obtain
that 
  \begin{equation*}
    J_\gamma(X_0) \le C(X_0) + \E(|X_\infty^g|),
\end{equation*}
where $C(X_0)$ is a positive constant independent of $\gamma$ and hence  $J_\gamma(X_0)$ is of order 1. \qed

\textbf{Proof of Lemma \ref{lemma:loadbalancing.} } We first verify that $(\SS,\A,p)$ is an instance of Model~\ref{Model}. We have $\SS = \Z^2_{+}$, $\A = \{r,g\}$ and the transitions probabilities are given explicitly by:
\begin{enumerate}    
    \item \textbf{Interior states } $x_1 > 0$, $x_2 > 0$:\begin{align*}
            &p(x+(1,0) \mid x, a) = \dfrac{\lambda p_a}{\lambda + \mu_1 + \mu_2} \\[0.5em]
            &p(x+(0,1) \mid x, a) = \dfrac{\lambda (1-p_a)}{\lambda + \mu_1 + \mu_2} \\[0.5em]
            &p(x+(-1,0) \mid x, a) = \dfrac{\mu_1}{\lambda + \mu_1 + \mu_2} \\[0.5em]
            &p(x+(0,-1) \mid x, a) = \dfrac{\mu_2}{\lambda + \mu_1 + \mu_2}
    \end{align*} 
    \item \textbf{Boundary states with } $x_2 = 0$, $x_1 > 0$:    \begin{align*}    &p(x+(1,0) \mid x, a) = \dfrac{\lambda p_a}{\lambda + \widetilde{\mu}} \\[0.5em]
            &p(x+(0,1) \mid x, a) = \dfrac{\lambda (1-p_a)}{\lambda + \widetilde{\mu}} \\[0.5em]
            &p(x+(-1,0) \mid x, a) = \dfrac{\widetilde{\mu}}{\lambda + \widetilde{\mu}}
    \end{align*} 
    \item \textbf{Boundary states with } $x_1 = 0$, $x_2 > 0$:    \begin{align*}
    &p(x+(1,0) \mid x, a) = \dfrac{\lambda p_a}{\lambda + \widetilde{\mu}} \\[0.5em]
            &p(x+(0,1) \mid x, a) = \dfrac{\lambda (1-p_a)}{\lambda + \widetilde{\mu}} \\[0.5em]
            &p(x+(0,-1) \mid x, a) = \dfrac{\widetilde{\mu}}{\lambda + \widetilde{\mu}}
    \end{align*}
    \item \textbf{Origin} $x = (0,0)$:\begin{align*}
            &p((1,0) \mid (0,0), a) = p_a \\[0.5em]
            &p((0,1) \mid (0,0), a) = 1 - p_a
        \end{align*}
\end{enumerate}

These equations define $\mu_a,\mu_a', \mu_a '' \in \PP$ as in \eqref{eq: transitionsmodel1} of Model \ref{Model}. We verify then that \eqref{parameter2}, \eqref{parameter4} and \eqref{parameter5} hold. For this we compute explicitly the drifts:
\begin{align*}
      &d_a \defeq d (  \mu_a ) = \left( \frac{\lambda p_a - \mu_1}{\lambda + \mu_1 + \mu_2}, \frac{\lambda (1 - p_a) - \mu_2}{\lambda + \mu_1 + \mu_2}\right),\label{eq:loadbalancing drift}\\
      &d_a ' \defeq d (\mu_a ') = \left( \frac{\lambda p_a - \widetilde{\mu}}{\lambda + \widetilde{\mu}}, \frac{\lambda (1 - p_a)}{\lambda + \widetilde{\mu}}\right) \text{ and } \\
      &d_a '' \defeq d (\mu_a '') = \left( \frac{\lambda p_a}{\lambda + \widetilde{\mu}}, \frac{\lambda (1 - p_a)-\widetilde{\mu}}{\lambda + \widetilde{\mu}}\right).
\end{align*}

Our analysis of \eqref{parameter2} starts by examining the slopes of the lines perpendiculars to the vectors $d_g$, $d_g'$, and $d_g''$ respectively. We conclude that for \eqref{parameter2} to hold we need 
 \begin{align*}
      &\frac{\widetilde{\mu}-\lambda p_g}{\lambda(1-p_g) } >\max \left\{ \frac{\lambda p_g - \mu_1}{\mu_2-\lambda(1-p_g)} ,\frac{\lambda p_g}{\widetilde{\mu}-\lambda(1-p_g)}\right\}
 \end{align*} 
This follows from \eqref{eq:loadbalancing1}, \eqref{eq:loadbalancing2}, and \eqref{eq:loadbalancing3}.

Consider now \eqref{parameter4}. Under condition \eqref{eq:loadbalancing2} with $l = e_1$, the quantity $\varrho$ can be computed explicitly
\begin{align*}
    \varrho &\defeq \min \{ d_r \cdot e_1, d_g \cdot e_1 \} = \\
    & = \min \left\{ \frac{\lambda p_r - \mu_1}{\lambda + \mu_1 + \mu_2} , \frac{\lambda p_g - \mu_1}{\lambda + \mu_1 + \mu_2} \right\} = \frac{\lambda p_r - \mu_1}{\lambda + \mu_1 + \mu_2}>0,
\end{align*}
Next, for \eqref{parameter5}, condition \eqref{eq:loadbalancing1} allows us to take $\alpha_1 = 1$ and 
\begin{align*}
    \alpha_0 &= \frac{\langle d_g, e_2 \rangle }{\langle d_r - d_g, e_2 \rangle } = \frac{\lambda(1-p_g)-\mu_2}{\lambda(p_g-p_r)}.
\end{align*}
We conclude that 
\begin{align*}
    \alpha_0 < \varrho \iff  \mu_2 < \frac{(\lambda p_r - \mu_1)(p_g-p_r)}{\lambda+\mu_1+\mu_2} + \lambda (1-p_g) 
\end{align*}
where the right-hand side is precisely \eqref{eq:loadbalancing3}.

Therefore, the load balancing example is an instance of Model~\ref{Model}.

Finally, we note that Theorem \ref{thm:MainResult} can be applied to the process defined by an agent always applying red action, implying transience for this process. Alternatively one can directly apply Theorem 3.3.1 of \cite{constructivetheoryofmarkovchains} to show transience taking into account \eqref{eq:loadbalancing1}.\qed

\textbf{Proof of Lemma \ref{lemma:serverallocation.}.} We follow the same steps as in the proof of Lemma \ref{lemma:loadbalancing.}. The transitions $p ( \ y \dado x, a)$ are obtained similarly as in the previous load balancing example. In particular the drifts obtained are:
\begin{align*}
      &d_r \defeq d (  \mu_r ) = \left( \frac{\lambda-\mu }{2\lambda + \mu}, \frac{\lambda}{2\lambda + \mu }\right) \\
      &d_g \defeq d (  \mu_g ) = \left( \frac{\lambda }{2\lambda + \mu}, \frac{\lambda - \mu}{2\lambda + \mu }\right)  \ \ \\
      &  d_a ' \defeq d (\mu_a ') = \left( \frac{\lambda - \widetilde{\mu}}{2\lambda + \widetilde{\mu}}, \frac{\lambda }{2\lambda + \widetilde{\mu}}\right) \ \ \text{ for } a=r,g\\
      &d_a '' \defeq d (\mu_a '') = \left( \frac{\lambda}{2\lambda+ \widetilde{\mu}}, \frac{\lambda - \widetilde{\mu}}{2\lambda + \widetilde{\mu}}\right) \ \ \text{ for } a=r,g
\end{align*}

    We check \eqref{parameter2}. Indeed, in this example we have that the region $R_a = \{v \in \R^2_+ : \langle w, v \rangle < 0 \text{ for } w = d_a,d_a',d_a'' \}$ is non empty for $a=r,g$ from where the conclusion of Lemma \ref{lemma:lyapunovgreen} will hold for both actions $a=r,g$. As in the proof of Lemma \ref{lemma:loadbalancing.} we consider the slopes of the lines perpendicular to the vectors $d_a,d_a',d_a''$, from where we see that $R_a$ is non empty for $a=r,g$ because of \eqref{eq:singleserver1b} and \eqref{eq:singleserver2b}.
    

Next for \eqref{parameter4} we can consider
\begin{equation}
    \alpha_0 := \frac{\mu - \lambda}{\mu} < \alpha_1 := \frac{\lambda}{\mu}.
\end{equation}
where the inequality because \eqref{eq:singleserver1b}.

We conclude by showing that \eqref{parameter5} hold. First, \eqref{eq:singleserver1b} implies that $l = \frac{1}{\sqrt{2}}(1,1)$ is a common direction for both drifts: $\langle d_r, l \rangle = \langle d_g, l \rangle > 0$. Indeed $\varrho := \min \{ \langle d_r, l \rangle ,  \langle d_g, l \rangle \} = \frac{1}{\sqrt{2}}\frac{2\lambda - \mu}{2\lambda + \mu }>0$, and then \eqref{eq:singleserver3b} implies \eqref{parameter5}. \qed

\end{document}